\renewcommand{\leq}{\leqslant}
\renewcommand{\geq}{\geqslant}
\renewcommand{\v}{\nu^\eps}
\newcommand{\g}[1]{g_\varepsilon\left( #1 \right)}
\newcommand{\ptl}{\partial}
\newcommand{\hhh}{{\mathcal{H}}}
\newcommand{\ve}{\nu^\varepsilon}
\newcommand{\n}[1]{\nabla_{#1}}
\newcommand{\hn}{\mathbb{H}^n}
\newcommand{\vh}{\nu^\hh}
\newcommand{\vet}{\ve_{2n+1}}
\newcommand{\vt}{\ve_{2n+1}}
\newcommand{\rr}{{\mathbb{R}}}
\newcommand{\hh}{{\mathbb{H}}}
\newcommand{\Om}{\Omega}
\newcommand{\eps}{\varepsilon}
\newcommand{\ga}{\gamma}
\newcommand{\escpr}[1]{\langle#1\rangle}
\newcommand{\scu}{\longrightarrow}
\newcommand{\z}{W^{\varphi,\varepsilon}}
\definecolor{champagne}{rgb}{0.97, 0.91, 0.81}
\definecolor{asparagus}{rgb}{0.53, 0.66, 0.42}
\DeclareMathOperator{\ric}{Ric}
\DeclareMathOperator{\graf}{graph}
\DeclareMathOperator{\divv}{div}
\DeclareMathOperator{\spann}{span}
\DeclareMathOperator{\lip}{Lip}
\DeclareMathOperator{\supp}{supp}
\newtheorem{theorem}{Theorem}[section]
\newtheorem{proposition}[theorem]{Proposition}
\newtheorem{definition}[theorem]{Definition}
\newtheorem{lemma}[theorem]{Lemma}
\theoremstyle{definition}
\theoremstyle{remark}
\numberwithin{equation}{section}
\author[J.~Pozuelo]{Juli\'an Pozuelo} 
\address[Juli\'an Pozuelo]{Dipartamento di Matematica “Tullio Levi-Civita” \\
Università di Padova \\ Via Trieste, 63, 35131, Padova, Italy}
\email{julian.pozuelodominguez@unipd.it}
\author[S. Verzellesi]{Simone Verzellesi}
\address[Simone Verzellesi]{Department of Mathematics, University of Trento, Via Sommarive 14, 38123 Povo (Trento), Italy}
\email[Simone Verzellesi]{simone.verzellesi@unitn.it}
\title[$t$-graphs of prescribed mean curvature in Heisenberg groups]{Existence and uniqueness of $t$-graphs of prescribed mean curvature in Heisenberg groups}
\date{\today}
\subjclass{53C17, 49Q10, 53A10}
\keywords{Prescribed mean curvature, Heisenberg group, sub-Riemannian geometry}
\thanks{\textit{Acknowledgements}. The authors would like to thank L. Capogna, G. Giovannardi, A. Pinamonti, M. Ritoré, C. Rosales and G. Saracco for stimulating and fruitful conversations about the contents of this paper.
J. Pozuelo is supported by the research grant PID2020-118180GB-I00 funded by MCIN/AEI/10.13039/501100011033 and  Fundación Ramón Areces grants \emph{XXXV convocatoria para la amplación de estudios en el extranjero en ciencias de la vida y la materia}.
S. Verzellesi is member of the Istituto Nazionale di Alta Matematica (INdAM), Gruppo Nazionale per l'Analisi Matematica, la Probabilità e le loro Applicazioni (GNAMPA), and is supported by INdAM–GNAMPA 2023 Project \emph{Equazioni differenziali alle derivate parziali di tipo misto o dipendenti da campi di vettori} and by  MIUR-PRIN 2022 Project \emph{Regularity problems in sub-Riemannian structures}. }
\begin{document} 

\maketitle

 \dedicatory\centerline{In Memory of Enrico Giusti}
 
\begin{abstract}
 We study the prescribed mean curvature equation for $t$-graphs in a Riemannian Heisenberg group of arbitrary dimension. We characterize the existence of classical solutions in a bounded domain without imposing Dirichlet boundary data, and we provide conditions that guarantee uniqueness. Moreover, we extend previous results to solve the Dirichlet problem when the mean curvature is non-constant. Finally, by an approximation technique, we obtain solutions to the sub-Riemannian prescribed mean curvature equation.
\end{abstract}


\section{Introduction}

The Plateau problem has been a fundamental issue in geometry since the pioneering works of Douglas (cf. \cite{MR1501590}) and Radó (cf. \cite{MR1502955}). The Euclidean prescribed mean curvature equation of the graph of a function $u\in C^2(\Om)$ over a bounded domain $\Om\subseteq\rr^n$ with $H\in C(\Om)$ reads as
\begin{equation}\tag{PMC}\label{PMC}
\divv\left(\frac{Du}{\sqrt{1+|Du|^2}}\right)=H,
\end{equation}
where $D$ stands for the Euclidean gradient. Notice that we are taking the mean curvature as the (not averaged) sum of the principal curvatures. When $H$ is constant and $\partial\Om$ is of class $C^2$, Serrin (cf. \cite{MR282058}) characterized the existence of solutions to the Dirichlet problem 
for \emph{any} boundary datum $\varphi\in C^2(\ptl\Om)$ by the condition
\begin{equation}\label{Serrin}
|H|\leq H_{\ptl\Om}(z_0)
\end{equation}
for any $z_0\in \ptl\Om$, where $H_{\ptl\Om}$ is the mean curvature of the boundary of $\Om$. In the proof, Serrin obtained Schauder estimates for $C^2$ solutions first by providing height estimates for $|u|$, and then, by means of a gradient maximum principle, showing that the maximum of the gradient is attained at the boundary of $\Om$. In the final step, he estimated the gradient at the boundary exploiting the so-called \emph{barriers} (cf. \cite{GT}), whose construction relies on \eqref{Serrin}. 
When $H$ is not constant, an approach based on the maximum principle typically fails. Therefore, in order to deal both with non-constant sources and to allow merely continuous boundary data, it is customary to rely on suitable interior and global gradient estimates.
Some references to these kind of estimates in the Euclidean space are the works of Korevaar and Simon (cf. \cite{MR0932680}) and Wang (cf. \cite{MR1617971}). Beyond the Euclidean framework, the Dirichlet problem for any sufficiently regular boundary datum and constant source $H$ satisfying conditions analogous to \eqref{Serrin} has been studied in warped products with a particular lower bound on the Ricci curvature (cf. \cite{MR2351645}), in $3$-dimensional Heisenberg groups (cf. \cite{MR2332426}) and in higher dimensional Heisenberg groups (cf. \cite{pmc1}). When $H$ is not constant, the previous results were later extended in Riemannian manifolds with a Killing vector field and a lower bound on the Ricci curvature depending on $\Om$ (cf. \cite{MR3540594,MR2526401,MR2413108}). 
When instead \eqref{Serrin} fails, meaning that
\begin{equation}\label{Serrinnot}
|H|> H_{\ptl\Om}(z_0)
\end{equation}
for some $z_0\in\partial\Om$,
we could lose control of the norm of the gradient of a solution near the boundary, and hence of the existence of solutions. More precisely, as shown in \cite{GT}, when \eqref{Serrinnot} holds there always exists a boundary datum $\varphi$ for which the Dirichlet problem has no solution. 
Nevertheless, the validity of \eqref{Serrinnot} does not preclude \emph{a priori} the existence of a \emph{suitable} boundary datum $\varphi$ for which the Dirichlet problem is solvable. As an instance, taking as domain $\Om\subseteq\rr^n$ the ball of radius $1$ centered at $0$, we can write the half sphere in $\rr^{n+1}$ centered at $0$ with radius $1$ as a graph over $\Om$. A simple computation reveals that it satisfies \eqref{PMC} with $H=\frac{n}{n-1}H_{\partial\Om}$ and with boundary datum $\varphi\equiv 0$. In particular, \eqref{Serrinnot} is verified for any $z_0\in\partial \Om$.  
In this regard, when $\Om$ has Lipschitz boundary, Giusti (cf. \cite{MR487722}) proved that the existence of solutions to \eqref{PMC} with a suitable boundary condition, not imposed \emph{a priori}, is characterized by
\begin{equation}\tag{*}\label{Giusti}
\left|\int_{\tilde \Om} H(x)\,dx\right|<P(\tilde \Om)
\end{equation}
for any set $\tilde \Om\subseteq\Om$ such that $\tilde \Om\neq\emptyset$ and $\tilde \Om\neq\Om$, where $P(\tilde \Om)$ is the perimeter of $\tilde \Om$. Moreover, \cite{MR487722} provides a characterization of those domains where \eqref{PMC} admits, up to vertical translations, a unique solution. Precisely, the previous statement is equivalent to each of the following conditions: there is no solution to \eqref{PMC} in any domain $\Om\subsetneq\hat \Om$; there is a solution on $\Om$ which is vertical at every point of $\ptl\Om$; \eqref{Giusti} holds and
\begin{equation}\tag{**}\label{Giusti2}
\left| \int_\Om H(x)\,dx \right|=P(\Om).
\end{equation}
In these cases, $\Om$ is called an \emph{extremal domain}. Otherwise, i.e. when \eqref{Giusti} also holds for $\tilde\Om=\Om$, then $\Om$ is called a \emph{non-extremal domain}. The proof of the existence of solutions under condition \eqref{Giusti} relies on previous results by Giaquinta (cf. \cite{MR336532,MR0377669}) and Miranda (cf. \cite{MR174706}). In the non-extremal case the proof consists in showing the existence of $BV$ minimizers of the penalized functional
\begin{equation}\label{penafun}
    \mathcal{F}(u)=\int_\Om \sqrt{1+|D u|^2}+\int_\Om Hu\,dx+\int_{\partial\Om} |u-\varphi|\,d\mathcal{H}^{n-1}
\end{equation}
for any $\varphi\in L^1(\ptl\Om)$, whose regularity is then gradually improved in several steps. 
The more involved extremal case, i.e. when \eqref{Giusti2} holds, follows by a compactness procedure. More precisely, in view of condition \eqref{Giusti}, every domain $\tilde\Om\subsetneq\Om$ is itself a non-extremal domain. Therefore, exploiting the existence result in the non-extremal case, together with a compactness argument based on a notion of \emph{generalized solution} first introduced by Miranda (cf. \cite{MR0500423}), existence in the extremal case follows. {For similar results under weaker assumptions on the boundary of $\Om$ we refer the reader to \cite{MR3767675}.\\

Although the Dirichlet problem, as previously discussed, has been widely studied beyond the Euclidean framework,
as far as the authors are aware no results in the spirit of \cite{MR487722} are available in the Riemannian setting. The aim of this work, consequently, is to lay the groundwork for the study of hypersurfaces with prescribed mean curvature outside the Euclidean setting and overcoming conditions inspired by \eqref{Serrin}.
As a relevant first instance, we choose as ambient manifold the \emph{$n$-th dimensional Riemannian Heisenberg group}.  The Heisenberg group $\hh^n$, for $n\geq 1$, is $\rr^{2n+1}$ endowed with 
the non-Abelian group law
\[
(x,y,t)\ast(x,y',t')=\left(x+x',y+y',t+t'\sum_{j=1}^n\left(x_j'y_j-x_jy_j'\right)\right),
\]
where $x=(x_1,\ldots,x_n)$, $x'=(x_1',\ldots,x_n')$, $y=(y_1,\ldots,y_n)$ and $y'=(y_1',\ldots,y_n')$, which realizes it as (the most relevant instance of) \emph{Carnot group} (cf. \cite{MR2363343}). The Lie algebra of $\hh^n$ is generated by the family of left-invariant vector fields
\[
X_i=\dfrac{\partial}{\partial x_i} + y_i \dfrac{\partial}{\partial t}, \qquad Y_i=\dfrac{\partial}{\partial y_i} - x_i \dfrac{\partial}{\partial t}, \qquad T=\dfrac{\partial}{\partial t},
\]
for $i=1,\ldots,n$. A remarkable class of Riemannian structures can be defined on $\hh^n$ by choosing, for any $\eps\neq 0$, the unique Riemannian metric $g_\eps$ which makes  $\{X_1,\ldots,X_n,Y_1,\ldots,Y_n,\eps T\}$ an orthonormal frame. The importance of $(\hh^n,g_\eps)$ in the Riemannian framework is supported by several reasons. For instance, it appears in the classification of homogeneous $3$-spaces with isometry group of dimension $4$, usually denoted by $Nil_3(-1/\tau)$ (cf. \cite{MR2195187}).  When $\eps$ goes to $ 0$, the space $(\mathbb{H}^n,g_\eps)$ converges in Gromov-Hausdorff sense to the sub-Riemannian Heisenberg group $(\hh^n,\langle\cdot,\cdot\rangle)$, where $\langle\cdot,\cdot\rangle$ is the restriction of any of the metric $g_\eps$ to the \emph{horizontal distribution} $\hhh=\spann\{X_1,\ldots,X_n,Y_1,\ldots, Y_n\}$. The sub-Riemannian Heisenberg group $\hn$ is itself of fundamental importance in various settings, since it constitutes the prototypical model in the context of Carnot groups, sub-Riemannian manifolds (cf. \cite{MR3971262}) and \emph{CR manifolds} (cf. \cite{MR2312336}). In the following, we deal with a relevant class of non-parametric hypersurfaces, namely that of \emph{vertical or ($t$-) graphs}, i.e. Euclidean graphs over the horizontal hyperplane $\{t=0\}\equiv\rr^{2n}$. The equation of prescribed mean curvature of a $t$-graph over a bounded domain $\Om\subseteq \rr^{2n}$ for a given source $H$ is formally given by
\begin{equation}\tag{$\eps$-PMC}\label{pmc'}
\divv\left(\frac{Du+X}{\sqrt{\eps^2+|Du+X|^2}}\right)=H,
\end{equation}
where $X(x,y)=(-y,x)$ (cf. \cite{MR2262784,pmc1}). Our main achievement, in the spirit of \cite{MR487722}, is an existence and regularity result for solutions to \eqref{pmc'}, both in the non-extremal and in the extremal case.
\begin{theorem}\label{thm:1}
    Let $\Om\subseteq\rr^{2n} $ be a bounded domain with Lipschitz boundary, and let $H\in\lip(\Om)\cap C^{1,\gamma}_{loc}(\Om)$ for some $\gamma\in(0,1)$. Then \eqref{Giusti} holds if and only if there exists $u\in C^2(\Om)$ which is a classical solution to \eqref{pmc'} on $\Om$. Moreover, if $H\in C^{k,\gamma}_{loc}(\Om)$ for some $k\in\mathbb N$, $k\geq 1$, then $u\in C^{k+2,\gamma}_{loc}(\Om)$. Finally, if $H\in C^\infty(\Om)$, then $u\in C^\infty(\Om)$.
\end{theorem}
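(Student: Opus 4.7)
The plan is to imitate Giusti's strategy from \cite{MR487722}, adapted to the presence of the smooth drift $X$ and to the Riemannian parameter $\eps$. The forward implication is immediate: given a classical solution $u\in C^2(\Om)$ to \eqref{pmc'}, for any Lipschitz subdomain $\tilde\Om\subsetneq\Om$ integration of \eqref{pmc'} together with the divergence theorem yields
\begin{equation*}
\int_{\tilde\Om}H\,dx=\int_{\ptl\tilde\Om}\frac{\langle Du+X,\nu_{\tilde\Om}\rangle}{\sqrt{\eps^2+|Du+X|^2}}\,d\mh^{2n-1},
\end{equation*}
and since $\eps\neq 0$ the integrand is pointwise strictly smaller than $1$ in absolute value. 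Both the equality and the strict bound persist for arbitrary sets of finite perimeter by standard approximation of the reduced boundary.

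For the converse I would split into the non-extremal and extremal cases. In the non-extremal case, where \eqref{Giusti} also holds at $\tilde\Om=\Om$, I would minimize on $BV(\Om)$ the penalized functional
\begin{equation*}
\mathcal F(u)=\int_\Om\sqrt{\eps^2+|Du+X|^2}+\int_\Om Hu\,dx+\int_{\ptl\Om}|u|\,d\mh^{2n-1},
\end{equation*}
where the first term is interpreted in its BV relaxation and the last term is a trace penalization that replaces any prescribed Dirichlet datum. The condition $|\int_\Om H\,dx|<P(\Om)$, combined with a Giaquinta--Miranda type argument, yields coercivity of $\mathcal F$ modulo constants, while convexity of the integrand gives lower semicontinuity, so the direct method produces a minimizer $u_0$. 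The strict form of \eqref{Giusti} on every proper subdomain prevents the sublevel sets of $u_0$ from concentrating on $\ptl\Om$, which allows one to upgrade $u_0$ first to a Sobolev function, then to $C^{1,\alpha}_{loc}(\Om)$ by De Giorgi--Nash--Moser/Ladyzhenskaya--Ural'tseva theory applied to the uniformly elliptic equation (uniform ellipticity being granted by $\eps>0$ on bounded gradient sets), and finally to $C^{2,\ga}_{loc}(\Om)$ via Schauder bootstrap using the Lipschitz assumption on $H$. At this regularity level the variational inequality reduces, through inner variations $u_0+t\phi$ with $\phi\in C_c^\infty(\Om)$, to \eqref{pmc'} in the classical sense.

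The extremal case, $|\int_\Om H\,dx|=P(\Om)$, follows by a compactness argument. Indeed, \eqref{Giusti} forces every Lipschitz $\Om'\subsetneq\Om$ to be itself non-extremal, so the previous step supplies classical solutions $u_k\in C^2(\Om_k)$ along an exhausting sequence $\Om_k\nearrow\Om$. After normalizing each $u_k$ by fixing its value at a common interior point, interior gradient and Schauder estimates for \eqref{pmc'} provide uniform $C^{2,\ga}$ bounds on compacta of $\Om$; a diagonal extraction then gives a subsequence converging in $C^2_{loc}(\Om)$ to a classical solution $u$ on $\Om$. For higher regularity, once $u\in C^{2,\ga}_{loc}(\Om)$ is secured, iteratively differentiating \eqref{pmc'} and invoking linear Schauder theory with $H\in C^{k,\ga}_{loc}(\Om)$ delivers $u\in C^{k+2,\ga}_{loc}(\Om)$; the $C^\infty$ statement is the limit of this iteration.

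The main obstacle, in my view, is the production of uniform interior gradient estimates for \eqref{pmc'}: the vector field $X$ prevents the operator from being in pure divergence form of the classical minimal-surface type, so the Korevaar--Simon--Wang arguments need to be carried out carefully, tracking the linear growth of $X$ through every maximum principle and every interpolation. Once such estimates are available independently of the exhausting domain, the regularity of the BV minimizer in the non-extremal case and the passage to the limit in the extremal case proceed along classical lines.
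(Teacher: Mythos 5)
Your overall architecture (penalized functional without a prescribed Dirichlet datum in the non-extremal case, compactness along an exhaustion in the extremal case, forward implication by integrating the equation) matches the paper's, and you correctly single out gradient estimates as a key ingredient. However, there are two genuine gaps in the converse direction.

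First, the step ``upgrade $u_0$ first to a Sobolev function, then to $C^{1,\alpha}_{loc}$ by De Giorgi--Nash--Moser applied to the uniformly elliptic equation'' does not work as stated: the operator in \eqref{pmc'} is uniformly elliptic only where $Du$ is already bounded, and a $BV$ minimizer carries no a priori gradient bound, so there is no uniformly elliptic equation to which that theory applies. This is precisely the hard part of the proof, not a routine upgrade. The paper gets from $BV$ to $\lip_{loc}$ through a long chain: local boundedness via density estimates (\Cref{linftyloc}); regularity of $\partial^* E_u$ as an almost-minimizer of an \emph{anisotropic} perimeter (\Cref{reglambdamin}); exclusion of verticality of the graph outside a small set, via intrinsic $Y_1$-graphs and a strong maximum principle applied to $T|_U\varphi$ (\Cref{prop:regu}); the upgrade to $W^{1,1}$ (\Cref{wunoloc}); and finally a comparison with classical solutions of Dirichlet problems in small balls, which requires the separate existence theorem \Cref{existence*} and the interior gradient estimate \Cref{mainige'}, to conclude $\lip_{loc}$ (\Cref{liploc}). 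Only at that point do De Giorgi--Nash--Moser and Schauder enter, applied to the linear equation satisfied by the difference quotients $u_v$ (\Cref{uisc2}).

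Second, in the extremal case, normalizing $u_k$ by its value at a single interior point and then invoking ``interior gradient and Schauder estimates'' for uniform $C^{2,\gamma}$ bounds on compacta is circular: the constant in \Cref{mainige'} depends on $\|u_k\|_{L^\infty}$ of a slightly larger compact set, and a pointwise normalization does not produce such uniform local $L^\infty$ bounds (the $u_k$ could still diverge to $+\infty$ on part of $\Om$ and to $-\infty$ elsewhere). The paper instead passes to the limit of the subgraphs as sets, obtaining a \emph{generalized} $H$-minimizer possibly taking the values $\pm\infty$ (\Cref{hcompthm}), normalizes in measure via \eqref{translatedhpeps}, and rules out the sets $N_\pm=\{u=\pm\infty\}$ using \Cref{neminimoeps}, the extremality condition \eqref{Giusti2} and Giusti's Lemma 1.2; only then do local boundedness and the regularity machinery of the non-extremal case become available. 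Without some replacement for this generalized-solution argument, your diagonal extraction cannot be started.
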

Our approach can be summarized in the following major points.
\begin{itemize}
    \item Following Giusti's scheme, we will first prove existence of $BV$ minimizers of a suitable penalized functional, analogous to \eqref{penafun}, in the non-extremal case (cf. \Cref{existenceeps}). To improve the regularity of such minimizers, we first rely on suitable variational properties of minimizers (cf. \Cref{minimplieslambdamin} and \Cref{prop:voldensity}) to infer that 
minimizers are locally bounded in $\Om$ (cf. \Cref{linftyloc}). 
\item Our second step relies on a generalization to the anisotropic setting (cf. \cite{MR4430590}) of some celebrated regularity results for almost-minimizers of the perimeter (cf. e.g. \cite{MR0179651,MR0171198,MR0420406,MR0467476,MR0667448,MR0648941}, and cf. \cite{MR4400905,LV2024} and references therein for recent developments).
Exploiting some results from \cite{MR4430590}, we shall see that the boundary $\ptl E_u$ of the subgraph of a minimizer $u$ is regular outside a singular portion with small Hausdorff dimension (cf. \Cref{reglambdamin}).
A crucial result then consists in translating these regularity properties from $\partial E_u$ to $u$. More precisely, we show that $u$ is regular outside a small set  $\Om_{u,0}\subseteq \Om$ for which $\mathcal{H}^s(\Om_{u,0})=0$ for any $s>2n-7$ (cf \Cref{prop:regu}). The proof of these results is based on a careful analysis of the prescribed mean curvature equation for \emph{intrinsic graphs} in the sense of \cite{MR2223801}. 
\item Next, owing to some structure properties of the Riemannian perimeter induced by the metric $g_\eps$ (cf. \Cref{reprlemma}), we show that minimizers enjoy Sobolev regularity (cf. \Cref{wunoloc}).
\item In view of the previous steps, and exploiting a suitable existence result for the Dirichlet problem in small balls (cf. \Cref{existence*}), we provide local Lipschitz regularity by means of an approximation procedure and a comparison principle argument (cf. \Cref{liploc}).
\item To pass from Lipschitz regularity to higher regularity, exploiting a well-established approach,
we write a linear uniformly elliptic equation for the function
\[
u_v(z)=\frac{u(z+v)-u(z)}{|v|},
\]
so that both the De Giorgi-Nash-Moser theory for $C^{1,\alpha}$ regularity and the classical Schauder theory for higher regularity apply (cf. \Cref{uisc2}). This last step basically concludes the proof of \Cref{thm:1} in the non-extremal case.
\item  Finally, the existence of classical solutions in the extremal case (cf. \Cref{extremalcases}) follows exploiting the aforementioned approximating procedure, together with a suitable compactness argument and the extension to the Riemannian setting of the Euclidean notion of generalized solution to \eqref{PMC}.
\end{itemize}

As already mentioned, since our source $H$ may not be constant, a crucial step in the proof of local Lipschitz regularity is the use of suitable interior gradient estimates. More precisely, we extend the proof of Korevaar and Simon (cf. \cite{MR0932680}) to achieve the following result, which may be of independent interest.
\begin{theorem}[Interior gradient estimates]\label{mainige'}
    Let $\Om\subseteq\rr^{2n}$ be a bounded domain. Let $H\in C^1(\Om)$. Let $u\in C^3(\Om)$ be a solution to \eqref{pmc'} and let $\tilde\Om\Subset\Om$ be a domain. For any  domain $\hat\Om\Subset\tilde\Om$ there exists a constant 
    $C=C\left(n, \eps,d(\partial\hat\Om,\partial\tilde\Om),\|u\|_{L^\infty(\tilde\Om)},\|H\|_{C^1(\tilde\Om)},\|X\|_{L^\infty(\tilde\Om)}\right)>0$
    such that
    \begin{equation}\label{ige2024'}
    \|Du\|_{L^\infty(\hat\Om)}\leq C,
    \end{equation}
    where $d(\partial\hat\Om,\partial\tilde\Om)$ the Euclidean distance between $\partial\hat\Om$ and $\partial\tilde\Om$.
\end{theorem}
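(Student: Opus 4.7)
The plan is to adapt the Korevaar–Simon interior gradient estimate for quasilinear elliptic equations (cf. \cite{MR0932680}) to the prescribed mean curvature equation in the Heisenberg setting. Expanding the divergence and using that $\divv X = 0$, \eqref{pmc'} is equivalent to the non-divergence form equation
\[
a^{ij}(Du)\, u_{ij} = H + F(Du), \qquad a^{ij}(p) := \frac{\delta_{ij}}{W(p)} - \frac{(p+X)_i(p+X)_j}{W(p)^3},
\]
where $W(p) := \sqrt{\eps^2 + |p+X|^2}$ and $F$ collects the contributions from the constant partial derivatives of $X$, with $|F| \leq C(n)$ uniformly. The coefficients $a^{ij}$ are uniformly elliptic provided $W$ is bounded and degenerate as $W \to \infty$; the estimate \eqref{ige2024'} is precisely the statement that this degeneration cannot occur in the interior.

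The first key step is to compute the linearized operator $\mathcal{L} := a^{ij}(Du)\partial_{ij}$ applied to $v := \log W$. Differentiating \eqref{pmc'} in each direction $z_k$, multiplying by $(u_k + X_k)/W$, summing, and discarding the non-negative second-fundamental-form contribution, one obtains a Jacobi-type inequality of the form
\[
\mathcal{L}(v) \geq -C_1\bigl(1 + \|X\|_{L^\infty(\tilde\Om)}^2 + \|H\|_{C^1(\tilde\Om)}\bigr),
\]
with $C_1 = C_1(n,\eps)$. Simultaneously, the equation itself gives $|\mathcal{L}(u)| \leq \|H\|_{L^\infty(\tilde\Om)} + C_2(n,\|X\|_{L^\infty(\tilde\Om)})$.

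Next I would build the auxiliary function. Fix a cutoff $\eta \in C^2_c(\tilde\Om)$ with $\eta \equiv 1$ on $\hat\Om$, $0 \leq \eta \leq 1$, and $|D\eta|, |D^2\eta|$ controlled by $d(\partial\hat\Om,\partial\tilde\Om)^{-2}$. Consider
\[
\Phi := \eta^q v + A u
\]
for exponents $q \geq 2$ and $A > 0$ to be chosen. If $\Phi$ attains its maximum on $\partial\tilde\Om$ then $\eta^q v \leq A\|u\|_{L^\infty(\tilde\Om)}$ on $\hat\Om$ and we are done. Otherwise, at an interior maximum $z_0$ the first-order condition yields
\[
\eta^q\, \frac{DW}{W}(z_0) = -q\eta^{q-1} v\, D\eta - A\, Du,
\]
and the second-order condition $\mathcal{L}\Phi(z_0) \leq 0$, combined with the two inequalities above, becomes
\[
-\eta^q C_1\bigl(1 + \|X\|_{L^\infty}^2 + \|H\|_{C^1}\bigr) + A\bigl(H(z_0) + O(\|X\|_{L^\infty})\bigr) + (\text{cutoff terms}) \leq 0.
\]
The cutoff terms involve $|D\eta|^2 v$ and $v|D^2\eta|$; using the first-order identity together with the ellipticity of $a^{ij}$, the first can be absorbed into the leading term provided $q$ is large, while the latter is dominated by choosing $A$ sufficiently large in terms of $n$, $\eps$, $d(\partial\hat\Om,\partial\tilde\Om)$, $\|H\|_{C^1(\tilde\Om)}$ and $\|X\|_{L^\infty(\tilde\Om)}$. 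This forces $v(z_0) \leq C$, hence $\eta^q v \leq C$ on $\tilde\Om$ and in particular $v \leq C$ on $\hat\Om$, which is exactly \eqref{ige2024'}.

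The main technical obstacle is the derivation of the Jacobi-type inequality for $\mathcal{L}(\log W)$: because the contact field $X$ appears explicitly inside the coefficients, the computation produces several extra terms involving $\partial_i X_j$ that must be carefully controlled via Cauchy–Schwarz and absorbed either into the discarded second-fundamental-form term or into the final constant $C_1$. Once this inequality is established, the remaining steps follow the Korevaar–Simon blueprint essentially unchanged, with the cutoff argument tailored to the prescribed distance $d(\partial\hat\Om,\partial\tilde\Om)$.
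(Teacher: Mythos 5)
Your overall blueprint (a Korevaar--Simon maximum-principle argument on an auxiliary function combining a cutoff, a gradient quantity and a height term) is the right family of ideas, and the paper's proof is indeed of this type, carried out intrinsically: it applies $\Delta_S$ on the graph to $\nu^\eps_{2n+1}=\eps/W$ via the Jacobi identity \eqref{precodige}, uses a paraboloid $\varphi$ as spatial cutoff, and maximizes $\eta((u-\varphi)^+)\,W$ with $\eta(t)=(e^{Kt}-1)e^{-cK}$. However, as written your argument has a genuine gap in the final step: the inequality you derive at the interior maximum contains no term that actually forces $v(z_0)=\log W(z_0)$ to be bounded. If the Jacobi inequality is weakened to $\mathcal{L}v\ge -C_1$ by discarding the second-fundamental-form contribution, and the height term is the \emph{linear} function $Au$ (so $\mathcal{L}(Au)=A(H+F)$ is merely bounded, with no convexity term $\psi''a^{ij}u_iu_j$), then every term in $\mathcal{L}\Phi(z_0)\le 0$ is bounded independently of $v(z_0)$. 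Worse, the cutoff terms $v\,a^{ij}\eta_i\eta_j$ and $v\,a^{ij}\eta_{ij}$ tend to $0$ as $W\to\infty$ because of the degenerate ellipticity $a^{ij}\le \delta_{ij}/W$, so the inequality is perfectly consistent with $v(z_0)$ arbitrarily large and no contradiction is reached.

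The missing ingredient is a positive term that is quadratic in the large parameter. In the paper this is $\eta''\,|\nabla_S(u-\varphi)|^2\ge \tfrac{\eps^2}{2}K^2\eta$, which rests on the key quantitative claim \eqref{fundstimaige}: when the gradient is large at the maximum point, $|\nabla_S(u-\varphi)|^2\ge\eps^2/2$ (proved by the Cauchy--Schwarz computation $|\nabla_S(u-\varphi)|^2\ge \eps^2|Du-D\varphi|^2/(\eps^2+|Du+X|^2)\to\eps^2$). This $K^2$ term then dominates the $O(K)$ and $O(1)$ terms, yielding the contradiction for $K$ large. To repair your additive version you must recover an analogous term, either by keeping the $a^{ij}v_iv_j$ (equivalently $|h^\eps|^2$) contribution in the Jacobi inequality and converting it via the first-order condition $\eta^qDv=-qv\eta^{q-1}D\eta-ADu$ into $\gtrsim A^2\,a^{ij}u_iu_j\gtrsim A^2\eps^2/W$ (which must then be compared against a Jacobi remainder of the correct order $O(1/W)$, not $O(1)$), or by replacing $Au$ with a strictly convex function of $u$ such as $e^{Ku}$ so that $\psi''a^{ij}u_iu_j$ supplies the quadratic term. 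Your identification of the Jacobi-type inequality as the main technical obstacle is correct, but the stated form of that inequality, combined with the linear height term, is too weak for the argument to close.
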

 In the proof of \Cref{mainige'}, a crucial role is played by the identity
  \begin{equation}\label{precodige}
        \begin{split}
\Delta_S (\vet)&=g_\eps(\nabla_S H, \eps T) -\ve_{2n+1}\left(\ric_\eps(\ve,\ve)+|h^\eps|^2\right),
        \end{split}
    \end{equation}
   which holds for an embedded orientable hypersurface of class $C^3$.
Exploiting \eqref{precodige}, we can study the sign of the Laplace-Beltrami operator $\Delta_S$ associated to $S=\text{graph}(u)$, applied to the vertical coordinate of the Riemannian normal to the graph, $\nu^\eps_{2n+1}$.  
In \eqref{precodige}, $\nabla_S$, $\ric_\eps$ and $h^\eps$ stand for the gradient in $S$, the Ricci curvature of $(\mathbb{H}^{2n+1},g_\eps)$ and the second fundamental form of $S$ respectively. Remarkably, assuming the additional condition
 \begin{equation}\label{Serrinstrict}
|H|< H_{\ptl\Om}(z_0)
\end{equation}
for any $z_0\in \ptl\Om$, i.e. the sub-optimal version of \eqref{Serrin} required in \cite{MR2332426,pmc1}, the proof of \Cref{mainige'} can be adapted to provide global gradient estimates (cf. \Cref{mainigeglobaldue}). Consequently,  we can improve the existence results proved in \cite{MR2332426,pmc1} for the Riemannian Dirichlet problem associated to \eqref{pmc'}. More precisely, we generalize \cite[Theorem 6.1]{pmc1} to the case in which $H$ is not constant. 
 \begin{theorem}\label{existence*}
     Let $\Om\subseteq \rr^{2n}$ be a bounded domain with $C^{2,\alpha}$ boundary, for some $\alpha\in (0,1)$. Let $\varphi \in C^{2,\alpha}(\overline\Om)$ and let $H\in\lip(\Om)$.
     Assume that \eqref{Giusti} holds and that $\Om$ is a non-extremal domain. Assume in addition that \eqref{Serrinstrict} holds.
     Then, for any $\eps\neq 0$ there exists $u_\eps\in \lip(\Om)\cap  C^{2,\alpha}_{loc}(\Om)$ which solves \eqref{pmc'} on $\Om$ and such that $u_\eps=\varphi$ on $\partial \Om$. If in addition $H\in C^{1,\alpha}(\overline\Om)$, then $u_\eps\in C^{2,\alpha}(\overline\Om)$.
\end{theorem}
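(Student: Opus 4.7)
The strategy is the method of continuity (or, equivalently, Leray--Schauder degree theory), exactly as in the proof of \cite[Theorem~6.1]{pmc1} for constant $H$: one embeds the problem in a family $\{Q_\tau u=0,\ u|_{\partial\Omega}=\tau\varphi\}_{\tau\in[0,1]}$ whose endpoints are a trivial equation at $\tau=0$ and \eqref{pmc'} at $\tau=1$. Openness in $\tau$ is provided by the implicit function theorem, once the linearization is uniformly elliptic, which it is as soon as a gradient bound is available. Closedness reduces to a priori $C^{1,\beta}(\overline\Omega)$ estimates on $u_\tau$ that are uniform in $\tau$. The genuinely new ingredient with respect to \cite{pmc1} is the passage from constant to Lipschitz $H$.

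For the $L^\infty$ bound, observe that \eqref{Serrinstrict} together with the $C^{2,\alpha}$ regularity of $\partial\Omega$ and compactness yields a uniform gap
\[
H_{\partial\Omega}(z_0)-|H(z_0)|\geq\delta>0\qquad\text{for all }z_0\in\partial\Omega,
\]
so that at each boundary point one may construct local upper and lower barriers, supported in a fixed-size neighbourhood, whose $g_\varepsilon$-mean curvatures strictly dominate $|H|$ there. Comparison with these barriers gives both the $L^\infty$ control of $u_\tau$ and the normal-gradient bound $|Du_\tau\cdot\nu|\leq C$ on $\partial\Omega$, with $C$ independent of $\tau$; tangential derivatives at $\partial\Omega$ are controlled by $\|\varphi\|_{C^1(\partial\Omega)}$. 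Interior gradient control is supplied directly by \Cref{mainige'}, whose proof rests on the identity \eqref{precodige}. Combining interior and boundary estimates yields the global Lipschitz bound, which is essentially the content of \Cref{mainigeglobaldue}.

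With $\|Du_\tau\|_{L^\infty(\Omega)}$ in hand, \eqref{pmc'} becomes a uniformly elliptic quasilinear equation on the range of $u_\tau$. Following \Cref{uisc2}, one differences the equation along directions $v$ to obtain a linear uniformly elliptic PDE with bounded measurable coefficients satisfied by the quotient $u_v(z)=(u(z+v)-u(z))/|v|$; De Giorgi--Nash--Moser then promotes the Lipschitz bound to $C^{1,\beta}_{loc}$, and classical interior Schauder theory (cf.~\cite{GT}) upgrades this to $u_\varepsilon\in C^{2,\alpha}_{loc}(\Omega)$. If in addition $H\in C^{1,\alpha}(\overline\Omega)$, boundary Schauder theory for quasilinear equations with oblique $C^{2,\alpha}$ data propagates the estimate up to $\overline\Omega$. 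Closing the Leray--Schauder loop then produces $u_\varepsilon$.

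The main technical difficulty lies in the construction of the boundary barriers, and of the corresponding $L^\infty$ estimate, when $H$ is non-constant. In \cite{pmc1} the barrier was built explicitly from spherical caps, whose $g_\varepsilon$-mean curvature equals the constant $H$; here the construction must be localised around each $z_0\in\partial\Omega$, and the uniform gap $\delta$ above must be exploited to ensure that a barrier tuned to $\sup_{B_r(z_0)\cap\Omega}|H|+\eta$ remains, for suitably small $r$ and $\eta$, a strict sub/supersolution of \eqref{pmc'} on the relevant neighbourhood. It is precisely the \emph{strict} inequality in \eqref{Serrinstrict}---not merely \eqref{Serrin}---that underwrites this step.
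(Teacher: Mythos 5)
Your overall architecture (continuity method in a parameter $\tau$, interior gradient estimates from \Cref{mainige'}, boundary gradient estimates via barriers built from the strict inequality \eqref{Serrinstrict}, then difference quotients, De Giorgi--Nash--Moser and Schauder as in \Cref{uisc2}) is the same as the paper's. However, there are two concrete gaps. First, the height estimate. You claim that the local boundary barriers ``give both the $L^\infty$ control of $u_\tau$ and the normal-gradient bound.'' Barriers supported in a fixed-size neighbourhood of each $z_0\in\partial\Om$ control $|u_\tau-\tau\varphi|$ near $\partial\Om$, but they cannot control $\sup_\Om|u_\tau|$ unless you first push the extrema of $u_\tau$ to the boundary, and for non-constant $H$ no maximum principle does this. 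This is exactly why the theorem assumes \eqref{Giusti} together with non-extremality: the paper derives the uniform $L^\infty$ bound from \Cref{lem:nonop} (the quantitative form $|\int_{\tilde\Om}H|\leq(1-\delta)P(\tilde\Om)$) following \cite[Proposition 4.4]{pmc1}. Your proof never invokes the non-extremality hypothesis at all, so as written the a priori $L^\infty$ estimate --- which is an explicit input to \Cref{mainigeglobaldue} --- is unproved.

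Second, the case $H\in\lip(\Om)$ is not actually covered by your argument. The gradient estimates \Cref{mainige'} and \Cref{mainigeglobaldue} require $H\in C^1$ (the identity \eqref{precodige} involves $\nabla_S H$), and the continuity method needs the fixed-point map to land in $C^{2,\alpha}(\overline\Om)$, so it can only be run when $H\in C^{1,\alpha}(\overline\Om)$. The paper therefore proceeds in two steps: it first proves the theorem for $H\in C^{1,\alpha}(\overline\Om)$, and then, for merely Lipschitz $H$, mollifies to obtain $H_j\to H$ uniformly with $\|H_j\|_{C^1}$ bounded, checks (using the Cheeger constant of $\Om$ and the uniform gap in \eqref{Serrinstrict}) that the $H_j$ still satisfy \eqref{Giusti}, \eqref{subextremal} and \eqref{Serrinstrict} with uniform constants, solves for each $H_j$, and passes to the limit via uniform $C^1(\overline\Om)$ bounds, Ascoli--Arzel\`a and the compactness argument of \Cref{hcompthm}. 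This approximation step is where the conclusion $u_\eps\in\lip(\Om)\cap C^{2,\alpha}_{loc}(\Om)$ (rather than $C^{2,\alpha}(\overline\Om)$) for Lipschitz $H$ comes from, and it is missing from your proposal.
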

Regarding uniqueness, we shall prove that the extremal condition \eqref{Giusti2} is equivalent to the maximality of the domain and the verticality of solutions to \eqref{pmc'}, in a weak sense, at the boundary. Moreover, we show that and any of these conditions implies uniqueness of solutions of \eqref{pmc'} up to vertical translations.
\begin{theorem}\label{thm:uniqueness}
    Let $\Om\subseteq\rr^{2n}$ be a bounded domain with Lipschitz boundary. Let $H\in\lip(\Om)\cap C^{1,\gamma}_{loc}(\Om)$ for some $\gamma\in(0,1)$ satisfy \eqref{Giusti}. The following statements are equivalent.
    \begin{itemize}
        \item [(i)] If $\hat\Om\subseteq\rr^{2n}$ is any domain such that $\Om\subsetneq\hat\Om$, then there is no solution $u\in C^2(\hat\Om)$ to \eqref{pmc'} in $\hat\Om$.
        \item [(ii)] \eqref{Giusti2} holds.
        \end{itemize}
        Moreover, if $\partial\Om$ is of class $C^2$, then $(i)$ and $(ii)$ are equivalent to the following condition.
        \begin{itemize}
        \item [(iii)] For any $u\in C^2(\Om)$ which solves \eqref{pmc'} in $\Om$, it holds that
        \begin{equation*}
            \lim_{t\to 0^+}\left|\int_{\partial \Om_t}\frac{\langle\nu_t,Du+X\rangle}{\sqrt{\eps^2+|Du+X|^2}}\,d\mathcal{H}^{2n-1}\right|=\mathcal{H}^{2n-1}(\partial\Om),
        \end{equation*}
        where $\Om_t=\{z\in\Om\,:\,\min_{w\in\partial\Om}|z-w|>t\}$ is defined for $t>0$ small enough and $\nu_t$ is its exterior unit normal.
    \end{itemize}
    Finally, if $\partial\Om$ is of class $C^2$, each of the previous three conditions implies that solutions to \eqref{pmc'} in $\Om$ are unique up to vertical translations.
\end{theorem}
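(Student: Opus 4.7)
The plan is to establish $(ii)\Rightarrow(i)$ and $(i)\Rightarrow(ii)$ unconditionally, add $(ii)\Leftrightarrow(iii)$ under the $C^2$ assumption, and then derive uniqueness. Throughout, the driving observation is the strict pointwise bound $|T|<1$ for $T:=(Du+X)/\sqrt{\eps^2+|Du+X|^2}$ (which comes from $\eps\neq 0$), combined with the fact that \eqref{pmc'} is a Euclidean divergence-form equation, so the classical divergence theorem applies verbatim.

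For $(ii)\Rightarrow(i)$ I will argue by contradiction: a $C^2$ solution on any $\hat\Om\supsetneq\Om$ extends smoothly across $\partial\Om$, so the divergence theorem on $\Om$ yields $\int_\Om H\,dx=\int_{\partial\Om}\langle T,\nu\rangle\,d\mathcal{H}^{2n-1}$; since $|\langle T,\nu\rangle|\le|T|<1$ strictly and uniformly on the compact set $\overline\Om$, this gives $\bigl|\int_\Om H\bigr|<P(\Om)$, contradicting \eqref{Giusti2}. For $(i)\Rightarrow(ii)$ I proceed by contrapositive: if \eqref{Giusti2} fails then $\Om$ is non-extremal, and I will construct a slight enlargement $\hat\Om:=\{z:\mathrm{dist}(z,\Om)<\delta\}$ with Lipschitz boundary together with a Lipschitz extension $\tilde H$ of $H$ (preserving $C^{1,\gamma}_{loc}$ on a neighborhood of $\overline\Om$, e.g.\ by Kirszbraun plus local mollification) for which \eqref{Giusti} still holds on $\hat\Om$ when $\delta$ is small; \Cref{thm:1} then produces a solution on $\hat\Om$, contradicting (i). Verifying \eqref{Giusti} on $\hat\Om$ is the delicate point: I expect to use a compactness/lower-semicontinuity argument on the class of candidate sub-domains, exploiting the non-extremal gap $P(\Om)-\bigl|\int_\Om H\bigr|>0$ together with the strict inequality for proper sub-domains of $\Om$ to obtain a uniform quantitative bound for $\delta$ small.

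Under the $C^2$ assumption on $\partial\Om$, the equivalence $(ii)\Leftrightarrow(iii)$ follows from the divergence theorem on the inner parallel set $\Om_t$: I obtain $\int_{\Om_t}H\,dx=\int_{\partial\Om_t}\langle T,\nu_t\rangle\,d\mathcal{H}^{2n-1}$, and since $H\in L^\infty$ the left side converges to $\int_\Om H$, while smoothness of $\partial\Om$ gives $\mathcal{H}^{2n-1}(\partial\Om_t)\to P(\Om)$. Therefore (iii) reads exactly as $\bigl|\int_\Om H\bigr|=P(\Om)$, which is (ii).

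For uniqueness under (iii), let $u_1,u_2\in C^2(\Om)$ be two solutions with associated vector fields $T_i$, and set $\sigma:=\mathrm{sgn}(\int_\Om H)\in\{\pm 1\}$ (well defined because (ii) and $P(\Om)>0$ force $\int_\Om H\neq 0$). By (iii), both non-negative quantities $\int_{\partial\Om_t}(1-\sigma\langle T_i,\nu_t\rangle)\,d\mathcal{H}^{2n-1}$ tend to $0$, so the triangle inequality yields $\int_{\partial\Om_t}|\langle T_1-T_2,\nu_t\rangle|\,d\mathcal{H}^{2n-1}\to 0$. For each $M>0$ I will take the Lipschitz truncation $v_M:=\min(M,\max(-M,u_1-u_2))$, integrate by parts on $\Om_t$, use $\divv(T_1-T_2)=H-H=0$ to kill the bulk term, bound the boundary term by $M\int_{\partial\Om_t}|\langle T_1-T_2,\nu_t\rangle|$, and let $t\to 0^+$ to conclude $\int_\Om(T_1-T_2)\cdot Dv_M\,dx=0$. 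Strict convexity of $p\mapsto\sqrt{\eps^2+|p+X|^2}$ makes $p\mapsto(p+X)/\sqrt{\eps^2+|p+X|^2}$ strictly monotone, so the non-negative integrand must vanish, forcing $Du_1=Du_2$ on $\{|u_1-u_2|<M\}$; letting $M\to\infty$ gives $Du_1\equiv Du_2$ on $\Om$, whence $u_1-u_2$ is constant on the connected set $\Om$. The main obstacle will be the enlargement construction in $(i)\Rightarrow(ii)$, where the infinite family of competitor sub-domains requires a careful compactness argument rather than a pointwise continuity estimate.
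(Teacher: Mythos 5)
Your treatment of $(ii)\Leftrightarrow(iii)$ and of the final uniqueness assertion is correct and essentially coincides with the paper's own argument: the divergence identity $\int_{\Om_t}H\,dz=\int_{\partial\Om_t}\langle\nu_t,Du+X\rangle/\sqrt{\eps^2+|Du+X|^2}\,d\mathcal{H}^{2n-1}$ reduces (iii) to the single scalar identity $|\int_\Om H|=P(\Om)$ (for the implication $(iii)\Rightarrow(ii)$ you should note explicitly that a solution exists by \Cref{thm:1} under the standing hypothesis \eqref{Giusti}, so (iii) is not vacuous), and your uniqueness proof — truncated test functions, the vanishing of $\int_{\partial\Om_t}(1-\sigma\langle W_\eps u,\nu_t\rangle)\,d\mathcal{H}^{2n-1}$ forced by (iii), and strict monotonicity of $p\mapsto(p+X)/\sqrt{\eps^2+|p+X|^2}$ — is the same scheme as the paper's, which uses the one-sided truncation $\max\{0,\min\{v-u,k\}\}$ where you use a symmetric one; both work.

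The genuine gap is in $(ii)\Rightarrow(i)$. You assert that a $C^2$ solution on a domain $\hat\Om\supsetneq\Om$ "extends smoothly across $\partial\Om$" and apply the divergence theorem on $\Om$ with $|\langle T,\nu\rangle|<1$ uniformly on $\overline\Om$. This presupposes $\overline\Om\subseteq\hat\Om$, which is not given: $\hat\Om$ could be, say, $\Om$ together with a small ball poking through a single boundary point, in which case $u$ is undefined near most of $\partial\Om$ and your boundary integral does not exist. The correct (and simpler) route is the one the paper takes via Giusti: since $u$ solves \eqref{pmc'} in $\hat\Om$, the necessary condition \eqref{Giusti} holds in $\hat\Om$ (\Cref{necessaryfirsthp}); applying it to the proper nonempty subset $\tilde\Om=\Om\subsetneq\hat\Om$ gives the strict inequality $|\int_\Om H\,dz|<P(\Om)$, contradicting \eqref{Giusti2}, with no boundary regularity of $u$ required. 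Separately, in $(i)\Rightarrow(ii)$ you correctly identify the crux — that \eqref{Giusti} survives on the enlarged domain for small $\delta$ — but you leave it as an expectation. That verification is the only nontrivial content of the equivalence $(i)\Leftrightarrow(ii)$; it rests on the quantitative gap of \Cref{lem:nonop} (the $(1-\delta)$ bound uniformly over all subsets of $\Om$) and a perimeter comparison for subsets of the enlargement, and the paper handles it by importing Giusti's Proposition 2.2 word for word rather than by a soft compactness argument. As written, your proposal does not supply this step.
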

With regard to the sub-Riemannian Heisenberg group $\hh^n$, it is well known (cf. \cite{MR3510691,MR2262784,pmc1}) that the equation of prescribed mean curvature of a $t$-graph over $\Om\subseteq \rr^{2n}$ is formally given by
\begin{equation}\tag{$\hhh$-PMC}\label{pmchor}
\divv\left(\frac{Du+X}{|Du+X|}\right)=H.
\end{equation}
Due to the possible presence of \emph{characteristic points}, i.e. points in $\Om$ where $Du+X$ vanishes, \eqref{pmchor} may be both degenerate elliptic and singular (cf. \cite{MR2262784,pmc1}). Therefore, this issue should be understood in a weak sense. More precisely, the problem of finding $t$-graphs with prescribed mean curvature can be formulated by looking for local minimizers of the functional
\begin{equation}\label{subfunintro}
\mathcal{I}(u)=\int_\Om |Du+X|+\int_\Om Hu \,dz.
\end{equation}
When $H=0$, the regularity of such minimizers has carried the attention of many authors. In the papers of Ritoré (cf. \cite{MR2448649}) and Serra Cassano and Vittone (cf. \cite{MR3276118}) there are examples in $\hh^1$ both of entire area-minimizing $t$-graphs with merely locally Lipschitz regularity and of even discontinuous area-minimizing $t$-graph.
Assuming that $H=0$ and that minimizers have at least $C^1$ regularity,  Cheng, Hwang and Yang (cf. \cite{MR2481053}) proved the presence of a foliation by $C^2$ curves in the non-characteristic set. Later on, Ritoré and Galli (cf. \cite{MR3412382}) extended this result for a continuous source $H$,  and Ritoré and Giovannardi (cf. \cite{MR4314055}) obtained the same property under the weaker assumption of Lipschitz regularity and intrinsic $C^1$ regularity in the sub-Finsler Heisenberg group. Regarding the existence of such solutions, it is customary to rely on the aforementioned Riemannian approximation (cf. \cite{MR2262784,MR2043961}). A  similar approximation scheme was considered in the sub-Riemannian setting in \cite{MR2774306,MR2583494} to study the Lipschitz regularity for non-characteristic minimal surfaces.  For a detailed analysis of this approach, we refer to \cite{MR3510691}. Accordingly, our existence result can be applied to study minimization problems related to \eqref{subfunintro}. More precisely, we obtain solutions in $BV_{loc}(\Om)\cap L^\infty_{loc}(\Om)$ to the sub-Riemannian prescribed mean curvature equation in the following sense.
\begin{theorem}\label{thm:sub-Riem}
    Let $\Om\subseteq\rr^{2n}$ be an open and bounded set with Lipschitz boundary. Let $H\in\lip(\Om).$ Assume that $\eqref{Giusti}$ holds.
    Then, there exists $u\in BV_{loc}(\Om)\cap L^\infty_{loc}(\Om)$ such that $u$ is an $H$-minimizer for $P_\hh$ on $\Om\times\rr$ in the sense of \eqref{limminloc2}. Moreover,  there exist a sequence of open sets such that $ \Om_j\Subset\Om_{k}\Subset\Om$  for any $j<k$ and $\bigcup_{j=0}^\infty\Om_j=\Om$ and a sequence $(u_j)_j\subseteq C^\infty(\Om_j)$, such that each $u_j$ solves \eqref{pmc'} in $\Om_j$ and moreover
    \begin{equation*}
        u_j\to u\text{ almost everywhere in $\Om$ }\qquad\text{and}\qquad P_{\varepsilon_j}(E_{u_j},\cdot)\rightharpoonup^*P_{\mathbb H}(E_u,\cdot)\text{ locally in }\Om\times\rr,
    \end{equation*}   
 where $\rightharpoonup^*$ denotes the \emph{weak-$^*$ convergence} of measures.
\end{theorem}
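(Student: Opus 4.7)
The plan is to construct $u$ as an almost-everywhere limit of classical solutions to the Riemannian equation \eqref{pmc'} for a vanishing sequence of parameters $\varepsilon_j \to 0^+$, following the Riemannian approximation philosophy already used in \cite{MR2262784, MR3510691}. Since \Cref{thm:1} requires $H \in C^{1,\gamma}_{loc}(\Om)$ while here only $H \in \lip(\Om)$ is assumed, I would first mollify: on an exhausting sequence of Lipschitz subdomains $\Om_j \Subset \Om_{j+1} \Subset \Om$ with $\bigcup_j \Om_j = \Om$, define smooth approximations $H_j$ converging uniformly to $H$ on compact subsets. Because \eqref{Giusti} is a strict inequality on $\Om$, it persists on each $\Om_j$ for $H_j$ provided the mollification parameter is chosen sufficiently small (using the isoperimetric control of $\int_{\tilde\Om}H_j - \int_{\tilde\Om}H$ and lower semicontinuity of the perimeter). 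Applying \Cref{thm:1} on $\Om_j$ with source $H_j$ and parameter $\varepsilon_j$ then yields classical solutions $u_j \in C^\infty(\Om_j)$ to \eqref{pmc'}.

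The second step is to establish uniform local estimates. A uniform $L^\infty_{loc}$ bound follows from \Cref{linftyloc}, once the constants are tracked to be $\varepsilon$-independent on each fixed $\Om' \Subset \Om$; combined with the minimality of $u_j$ for the associated Riemannian penalized functional and the structural bound $P_\varepsilon \ge P_{\mathbb H}$, this yields a uniform $BV_{loc}$ bound by testing against a fixed competitor. By $BV$ compactness and a diagonal extraction, there is a subsequence (not relabeled) $u_j \to u$ almost everywhere in $\Om$ for some $u \in BV_{loc}(\Om) \cap L^\infty_{loc}(\Om)$, and consequently $\chi_{E_{u_j}} \to \chi_{E_u}$ in $L^1_{loc}(\Om \times \rr)$.

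For the weak-$^*$ convergence of perimeter measures, I would combine this $L^1_{loc}$ convergence of subgraphs with the $\Gamma$-convergence of the Riemannian area integrand $\sqrt{\varepsilon^2 + |Du+X|^2}$ to the sub-Riemannian one $|Du+X|$ as $\varepsilon \to 0$. Lower semicontinuity of $P_{\mathbb H}$ yields $P_{\mathbb H}(E_u, A) \le \liminf_j P_{\varepsilon_j}(E_{u_j}, A)$ for every open $A \Subset \Om \times \rr$, while a recovery-sequence construction together with the minimality of $u_j$ provides the matching upper bound, forcing convergence of masses and hence weak-$^*$ convergence of the measures $P_{\varepsilon_j}(E_{u_j},\cdot)$ to $P_{\mathbb H}(E_u,\cdot)$.

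The main obstacle will be to verify that $u$ is an $H$-minimizer in the sense of \eqref{limminloc2}. The strategy is to test against an arbitrary local competitor $v$ by constructing a sequence $v_j$ approximating $v$ in $BV_{loc}$ and coinciding with $u_j$ outside a compact set, then exploiting the Riemannian minimality of $u_j$ to get the corresponding comparison inequality, and finally passing to the limit. The delicate points are the simultaneous control of the perimeter term (via the $\Gamma$-convergence above) and of the linear penalization $\int H\,u\,dz$ (via uniform convergence $H_j \to H$ on compact sets together with the uniform $L^\infty_{loc}$ bound), as well as the gluing and cut-off constructions needed to ensure that the boundary traces of $v_j$ and $u_j$ match on transition layers whose $P_{\varepsilon_j}$-content tends to zero.
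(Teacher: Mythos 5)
Your overall architecture — mollify $H$ to smooth $H_j$ preserving \eqref{Giusti} on an exhaustion $\Om_j$, solve \eqref{pmc'} with parameter $\eps_j\to 0$, extract an a.e.\ limit, and obtain the weak-$^*$ convergence of perimeters by combining lower semicontinuity of $P_\hh$ with a glued-competitor/minimality argument — is the same as the paper's (cf.\ \Cref{hcompthm} and the proof of \Cref{thm:sub-Riem}). The treatment of the perimeter convergence is essentially the paper's: the "recovery sequence plus minimality" upper bound is exactly the comparison $\lambda(A)+\int_{E\cap A}H\le P_\hh(F,A)+\int_{F\cap A}H$ followed by the choice $F=E$, and the transition layers with vanishing $P_{\eps_j}$-content correspond to \eqref{e2}.

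The genuine gap is your second step. You claim a uniform $L^\infty_{loc}$ bound on the $u_j$ "follows from \Cref{linftyloc}, once the constants are tracked to be $\varepsilon$-independent", and you build the $BV_{loc}$ compactness on top of it. This fails in the extremal case \eqref{Giusti2}, which is included in the hypothesis \eqref{Giusti}: the coercivity constant $\delta$ of \Cref{lem:nonop} on the subdomains $\Om_j$ degenerates as $P(\Om_j)\to P(\Om)$ and $\big|\int_{\Om_j}H\big|\to P(\Om)$, so no $\eps$- and $j$-uniform local bound is available; indeed the $u_j$ must first be renormalized by vertical translations (cf.\ \eqref{translatedhpeps}), and even then the a.e.\ limit could a priori equal $\pm\infty$ on sets of positive measure. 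The paper circumvents this by working with the subgraphs: compactness of $\chi_{E_{u_j}}$ in $L^1_{loc}(\Om\times\rr)$ comes from the $\eps$-uniform perimeter density estimates \eqref{ineq:perden} together with \Cref{firstcompact}, the limit is only a \emph{generalized} $H$-minimizer in the sense of \Cref{genhsoldef}, and local finiteness is then proved by showing $|N_\pm|=0$ via \Cref{neminimo} and Giusti's extremality lemma \cite[Lemma 1.2]{MR487722}, before sub-Riemannian density estimates upgrade this to $u\in L^\infty_{loc}(\Om)$. Without this detour through generalized minimizers (or an equivalent device), your compactness step does not close in the extremal case; in the non-extremal case your argument is fine but is already covered by \Cref{nonextremalhoriz}.
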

We point out that \Cref{thm:sub-Riem} generalizes the existence result proved in \cite{MR3276118} for minimal $t$-graphs allowing $H$ to be different from zero. Another interesting difference with respect to \cite{MR3276118} consists in the different approach to the existence issue. Indeed, Serra Cassano and Vittone provided existence essentially via direct methods. On the other hand, although we believe that the same strategy could have worked as well in our framework, we preferred to provide existence combining direct methods and approximation. Indeed, as nicely explained in \cite{MR2774306}, sub-Riemannian minimizers arising as limit of Riemannian minimizers are only a particular subfamily of all sub-Riemannian minimizers, and they typically enjoy better regularity properties.
Finally, as a byproduct of the computations done to prove \eqref{precodige}, we recover in \Cref{thm:2v} the interpretation of the second variation formula of a smooth non-characteristic hypersurface in the sub-Riemannian Heisenberg group $\hh^n$ as a Riemannian limit stated in \cite{MR2710215,MR2723818}.
\\

This paper is organized as follows. In \Cref{sec:preliminaries}, we fix the notation and we provide some background, focusing mainly on some properties of the perimeter of a subgraph. In \Cref{sec:geomprop} we provide some useful tools in order to deal with embedded hypersurfaces in the Riemannian Heisenberg setting. \Cref{sec:ige} is devoted to the proof of \Cref{mainige'} and \Cref{mainigeglobaldue}, i.e. interior and global gradient estimates. \Cref{sec:mainsection} contains the existence and regularity results for solutions to \eqref{pmc'}, namely \Cref{thm:1} and \Cref{existence*}. In \Cref{sec:uniqueness} we prove the uniqueness result \Cref{thm:uniqueness}. In \Cref{applisub} we prove \Cref{thm:sub-Riem} and \Cref{thm:2v}.
Finally, in \Cref{sec:proofs} we prove \eqref{precodige}, including the proofs of some facts stated in \Cref{sec:preliminaries} and \Cref{sec:geomprop}.

\section{Preliminaries}\label{sec:preliminaries}

\subsection{Notation}\label{notsec} Unless otherwise specified, we let $n\in\mathbb{N}$, $n\geq 1$ and we denote by $\Om$ a bounded domain in $\rr^{2n}$ with Lipschitz boundary. Given a measurable set $A\subseteq \rr^n$, we denote by $\overline A$ the closure of $A$ and by $\chi_A$ the characteristic function of $A$. Given two open sets $A,B\subseteq \rr^n$, we write $A\Subset B$ whenever $\overline A\subseteq B$. The $n$-dimensional Lebesgue measure in $\rr^n$ is denoted by $|\cdot|$. The $s$-dimensional Hausdorff measure with respect to the Euclidean distance is denoted by $\mathcal{H}^s$. The Euclidean ball in $\rr^n$ centered at $p\in\rr^n$ with radius $r>0$ is denoted by $B(p,r)$.

\subsection{The Heisenberg group $\hh^n$}

We follow the notation and background given in \cite{MR2435652}. We define a non-Abelian group law $\cdot$ in $\rr^{2n+1}$ by 
\begin{equation*}
    p\cdot p'=( x, y,t)\cdot (x', y',t')=\left( x+x', y+ y', t+t'+\sum_{j=1}^n\left(x_j'y_j-x_jy_j'\right)\right),
    \end{equation*}
where we denote points $p\in\mathbb R^{2n+1}$ by $p=(z,t)=(x, y,t)=(x_1,\ldots,x_n,y_1,\ldots,y_n,t)$.
The Lie group $(\rr^{2n+1},\cdot)$ is referred to as the \emph{$n$-th Heisenberg group} and is denoted by $\hh^n$. For any $p\in\hh^n$, the left-translation by $p$ is the diffeomorphism $\ell_p(q)=p\cdot q$.  A basis of left-invariant vector fields is given by
\begin{equation*}
X_i=\frac{\ptl}{\ptl x_i}+y_i\,\frac{\ptl}{\ptl t}, \qquad
Y_i=\frac{\ptl}{\ptl y_i}-x_i\,\frac{\ptl}{\ptl t}, \qquad
T=\frac{\ptl}{\ptl t},
\end{equation*}
where $i=1\ldots,n$.
The horizontal distribution $\mathcal{H}$ in $\hh^n$ is the smooth  distribution generated by $X_1,Y_1,\ldots,X_n,Y_n$.  A vector field $U$ is said \emph{horizontal} if $U(p)\in\mathcal H_p$ for any $p\in\hh^n$. Given a smooth function $f$, we define the \emph{horizontal gradient} of $f$ by $$\nabla^\hh f=\sum_{i=1}^n X_i(f)X_i+Y_i(f)Y_i.$$
 For any $\eps\neq 0$, the 
 matrix of change of basis from $X_1,\ldots,Y_n,\varepsilon T$ to the canonical basis of $T\rr^{2n+1}$ at $p=(x_1\ldots x_n,y_1,\ldots,y_n,t)$ is given by
 \begin{equation}\label{base}
 C_\varepsilon=
\begin{pmatrix}
  1 &  \ldots & 0 & 0 & \ldots & 0 & 0 \\
  \vdots & \ddots & \vdots & \vdots & \ddots  & \vdots & \vdots \\
  0 &  \ldots & 1 & 0 & \ldots & 0 & 0 \\
  0 &  \ldots & 0 & 1 & \ldots & 0 & 0 \\
    \vdots & \ddots & \vdots & \vdots & \ddots  & \vdots & \vdots \\
  0 &  \ldots & 0 & 0 & \ldots & 1 & 0 \\
  y_1 &  \ldots & y_n & -x_1 & \ldots & -x_n & \eps \\
\end{pmatrix}.
 \end{equation}
In the following, we consider for any $\eps\neq 0$ the left-invariant Riemannian metric $g_\eps=\escpr{\cdot\,, \cdot}_\eps$ on $\hh^n$ such that $\{X_1,Y_1,\ldots,X_n,Y_n,\eps T\}$ is an orthonormal basis at every point. We shall drop the subindex in the metric $\escpr{\cdot,\cdot}$ when considering horizontal vectors. We shall also use the compact notation $\{Z_1,\ldots Z_{2n+1}\}$ to denote the previous basis. Moreover, we denote by $\nabla^\eps$ its associated Levi-Civita connection. The norm of a vector field $U$ with respect to $g_\eps$ will be denoted by $|U|_\eps$. 
 By means of the \emph{Koszul formula} (cf. \cite{MR1138207}), the following relations hold.
\begin{alignat}{2}
&\notag \nabla_{X_i}^\eps X_j=0, \qquad \qquad  \ \, \nabla^\eps_{Y_i}Y_j=0, \qquad \ \,\nabla^\eps_{T}T=0, \\
\label{levi-civita}&\nabla^\eps_{X_i}Y_j=-\delta_{i,j}T, \qquad \, \nabla^\eps_{X_i}\eps T=\frac{Y_i}{\eps}, \quad \ \ \nabla^\eps_{Y_i}\eps T=-\frac{X_i}{\eps}, \\
&\notag \nabla^\eps_{Y_i}X_j=\delta_{i,j}T, \qquad \ \ \,\,\nabla^\eps_{\eps T}X_i=\frac{Y_i}{\eps}, \  \ \quad \nabla^\eps_{\eps T}Y_i=-\frac{X_i}{\eps}
\end{alignat}
for any $1\leq i,j\leq n$, where $\delta_{i,j}$ is the Kronecker delta.
Setting $J(U)=\nabla^1_U T$ for any vector field $U$, we can easily deduce that $J(X_i)=Y_i$, $J(Y_i)=-X_i$ and $J(T)=0$.
 The Riemannian volume of a set $E$ for $g_\eps$ is, up to a constant, the Lebesgue measure in $\rr^{2n+1}$. 
Given a norm $|\cdot|$ in $T\hh^n$, and $v\in T_p\hh^n$, we will drop the subindex $p$ and write $|v|$. 
We denote by $\ric$ the quadratic form associated with the Ricci tensor induced by $g_\eps$, that is
    \begin{equation*}
    \begin{split}
\ric(U)&=\sum_{j=1}^{2n+1}g_\varepsilon\left(\n{Z_j}\n{U}U-\n{U}\n{Z_j}U+\n{[U,Z_j]}U,Z_j\right)
    \end{split}
     \end{equation*}
for any vector field $U$ of class $C^2$.
As we will show in \Cref{sec:proofs},
    \begin{equation}\label{ricciexpress}
        \ric(U)=-\frac{2}{\eps^2}\g{U,U}+(2n+2)\frac{u_{2n+1}^2}{\eps^2}
    \end{equation}
 for any vector field $U=\sum_{j=1}^{2n+1}u_jZ_j\in C^2(\mathbb H^n;T\mathbb H^n)$.
\subsection{Carnot-Carathéodory structure on $\hn$}
If $\Gamma:[a,b]\scu \hn$ is an absolutely continuous curve, we say that it is \emph{horizontal} whenever $ \Dot\Gamma(t)\in\hhh_{\Gamma(t)}$
for almost every $t\in [a,b]$,
and we say that it is \emph{sub-unit} whenever it is horizontal with $|\Dot\Gamma(t)|=1$ for almost every $t\in[a,b]$.
Moreover, we define 
\begin{equation}\label{dccaprile}
    d^\hh(p,p'):=\inf\{T\,:\,\Gamma:[0,T]\scu \mathbb{H}^n\text{ is sub-unit, $\Gamma(0)=p$ and $\Gamma(T)=p'$}\}
\end{equation}
which, by the Chow-Rashevskii theorem (cf. \cite{MR0001880}), defines a distance on $\mathbb{H}^n$, called \emph{Carnot-Carathéodory distance}. The metric space $(\hn,d^\hh)$ is then a prototype of \emph{Carnot-Carathéodory space} (cf. \cite{MR1421823}).

\subsection{Embedded hypersurfaces in $\hh^n$}
We consider oriented hypersurfaces of class $C^1$ embedded in $\hh^n$, and we choose a unit normal to $S$. In case $S$ is the boundary of a domain in $\hh^n$, we always choose the outer unit normal $\ve$. The \emph{characteristic set} of $S$, denoted by $S_0$, is the set of points $p\in S$ where the tangent space $T_pS$ coincides with the horizontal distribution $\hhh_p$. The \emph{horizontal unit normal} $\vh$ is defined in $S\setminus S_0$ by
\[
\vh=\frac{\ve_h}{|\ve_h|},
\]
where we denote by $U_{h}$ the orthogonal projection of a vector $U$ onto $\mathcal{H}$.


\subsection{Perimeters in $\hh^n$}

In this subsection, $\Om\subseteq \rr^{2n}$ is a bounded open set and $\eps\neq 0$. We denote the Euclidean norm, total variation,  perimeter and divergence in $\rr^{2n+1}$ by $|\cdot|$, $Var$, $P$ and $\divv_{eu}$ respectively, and we denote the space of functions of bounded variation by $BV_{eu}$.
Given an open set $A\subseteq\hh^n$ and $f\in L^1(A)$, the \emph{$\eps$-variation} of $f$ in $A$ is denoted by
\[
Var_\eps(f;A)=\sup\left\lbrace \int_{A} f\divv_\eps (U)\, dx: U\in C_c^1(A;T\hh^n), \ |U|_{\eps,\infty}\leq 1\right\rbrace,
\]
where $C_c^1(A;T\hh^n)$ is the space of $C^1$ compactly supported vector fields in $A$, $|U|_{\eps,\infty}=\sup_{p\in A}|U(p)|_{\eps}$ and $\divv_\eps$ is the divergence associated to $g_\eps$. The space of $L^1(A)$ functions with bounded $\eps$-variation is denoted by $BV_\eps(A)$.
Given $E\subseteq\hh^n$ a measurable set and $A\subseteq\hh^n$ an open set, the $\eps$-perimeter of $E$ in $A$ is given by
$$
P_\eps(E;A)=Var_\eps (\chi_E;A).
$$
In case $A=\hh^n$ we write $P_\eps(E;\hh^n)=P_\eps(E)$. Given a vector field $U\in C_c^1(A;T\hh^n)$, it follows from \eqref{base} that
\begin{equation}\label{equaldiv}
    \divv_{eu} U=\divv_\eps U,
    \end{equation} so that from now on we shall simply write $\divv$.
There exist constants $C(\Om,\eps)>0$ and $c=c(\Om,\varepsilon)>0$ such that
      \begin{equation}\label{eq:normequiv}
       C|v| \leq |v|_{\varepsilon}\leq c|v|
       \end{equation}
for any $v\in T_p(\Om\times\rr)$. Indeed, we assume for clarity that $n=1$ and let $v=v_1\frac{\ptl}{\ptl x}+v_2\frac{\ptl}{\ptl y}+v_3\frac{\ptl}{\ptl t}$. Using \eqref{base} and recalling that $(a+b)^2\leq 2a^2+2b^2$ for any $a,b\in\rr$, we get
\begin{equation*}
    \begin{split}
|v|_{\varepsilon}^2
\leq v_1^2+v_2^2+\frac{2}{\varepsilon^2}v_3^2+\frac{4}{\varepsilon^2}y^2v_1^2+\frac{4}{\varepsilon^2}x^2v_2^2
\leq 4\left(1+\frac{1+\max_{z\in\bar{\Om}}|z|^2}{\eps^2}\right)|v|^2 \color{black},
    \end{split}
\end{equation*}
 On the other hand, 
\begin{equation*}
    \begin{split}
 |v|^2 
 \leq v_1^2+v_2^2+4x^2v_2^2+4y^2v_1^2+2(v_3-yv_1+xv_2)^2
 \leq \max\left\{1+4\max_{z\in\overline\Om}|z|^2,2\eps^2\right\}|v|_{\varepsilon}^2.\color{black}
    \end{split}
\end{equation*}
   Notice that, in \eqref{eq:normequiv}, $c$ can be chosen uniformly in $\eps$ for $|\eps|$ big enough, while $C$ can be chosen uniformly in $\eps$ for $|\eps|$ small enough.
  Given an open set $A\subseteq\Om\times\rr$  and a vector field $U\in C_c^1(A;T\hh^n)$ with $|U|_{\eps,\infty}\leq 1$, it follows from \eqref{eq:normequiv} that $|CU|\leq |U|_\eps\leq1$
    and 
    $$
    \int_A f\divv(U)dx=\frac{1}{C}\int_A f\divv(CU)dx \leq \frac{1}{C}Var(f,A)
    $$
    for any $f\in L^1(\Om\times \rr)$. Hence $Var_{\eps}(f,A)\leq \frac{1}{C}Var(f,A)$. Similarly, $Var(f,A)\leq \frac{1}{c} Var_{\eps}(f,A)$, so that
    \begin{equation}\label{ineq:perieq}
    \frac{1}{c} Var(f,A)\leq Var_\varepsilon(f,A)\leq \frac{1}{C}Var(f,A).
    \end{equation}
Notice that \eqref{ineq:perieq} implies that $BV_\eps(A)$ and $BV_{eu}(A)$ coincide  for $A\subseteq\Om\times\rr$, and shall be denoted by $BV(A)$. Moreover, the perimeters $P$ and $P_\eps$ are absolutely continuous with respect to each other. Hence, the Euclidean reduced and essential boundaries of a Caccioppoli set coincide with the ones induced by $P_\varepsilon$. In the following, we will denote by $\partial^* E$ the reduced boundary of $E$. 
Given $E\subseteq \hh^n$ measurable and $A\subseteq \hh^n$ open, the \emph{horizontal perimeter} of $E$ in $A$ is defined by 
\begin{equation*}
    P_{\mathbb H}(E,A)=\sup\left\{\int_E \divv U\,dx,\,U\in C^1_c(A,\mathcal{H}),\,|U|_{1,\infty}\leq 1\right\},
\end{equation*}
where $C^1_c(A,\mathcal{H})$ is the space of $C^1$ compactly supported horizontal vector fields in $A$. We refer to \cite{MR1871966} for the main properties of the horizontal perimeter.
We point out that both $P_\eps$ and $P_\hh$ behave like the Euclidean perimeter $P$ for vertical sets. More precisely, given a Caccioppoli set $E\subseteq\rr^{2n}$ and an open set $A\subseteq \mathbb H^n$, arguing as in \cite[(3.2)]{MR1312686} and observing that the last component of the measure theoretic Euclidean unit normal to $E\times\rr$ is zero, then
\begin{equation}\label{trace}
P_\eps(E\times\rr;A)=P_\hh(E\times\rr;A)=P(E\times\rr,A).
\end{equation}
By the above definitions, the following relations between horizontal and $\eps$-perimeter hold.


\begin{proposition}\label{pepestoph}
    Let $A\subseteq \hh^n$ be an open bounded set and $F\subseteq \hh^n$ be a Caccioppoli set. Then 
    \begin{equation*}
        P_{\mathbb H}(F,A)\leq P_\varepsilon(F,A)\qquad\text{and}\qquad
        \lim_{\varepsilon\to 0}P_{\varepsilon}(F,A)=P_{\mathbb H}(F,A).
    \end{equation*}
    Moreover, if $(\varepsilon_j)_j\subseteq(0,1)$ satisfies $\varepsilon_j\searrow 0$ as $j\to\infty$, and $E$ and $(E_j)_j$ are measurable sets such that $\chi_{E_j}\to\chi_E$ in $L^1_{loc}(A)$, then
    \begin{equation}\label{doublelsc}
P_\hh(E;A)\leq\liminf_{j\to\infty}P_{\eps_j}(E_j;A).
    \end{equation}
\end{proposition}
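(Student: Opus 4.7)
The plan is to address the three assertions separately, all hinging on one common observation: for a horizontal vector field $U$ the $g_\eps$-norm $|U|_\eps$ is independent of $\eps$ (the $\eps T$-coefficient vanishes and $\{X_i,Y_i,\eps T\}$ is $g_\eps$-orthonormal), and by \eqref{equaldiv} the divergence is also computed uniformly in $\eps$. Hence any horizontal test field admissible for $P_{\mathbb H}(F,A)$ is automatically admissible for $P_\eps(F,A)$ with the same value of $\int_F\divv U\,dx$.

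The inequality $P_{\mathbb H}(F,A)\leq P_\eps(F,A)$ follows at once: the supremum defining $P_{\mathbb H}$ is taken over a subclass of the test fields allowed for $P_\eps$. The lower semicontinuity \eqref{doublelsc} is obtained by the same principle. Fix a horizontal $U\in C^1_c(A;\hhh)$ with $|U|_{1,\infty}\leq 1$; since $U$ is $\eps_j$-admissible we have $\int_{E_j}\divv U\,dx\leq P_{\eps_j}(E_j,A)$. The compact support of $U$ combined with $\chi_{E_j}\to\chi_E$ in $L^1_{loc}(A)$ gives $\int_{E_j}\divv U\,dx\to\int_E\divv U\,dx$; passing to the liminf and then taking the supremum over $U$ yields \eqref{doublelsc}.

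The substantive step is the reverse inequality $\limsup_{\eps\to 0}P_\eps(F,A)\leq P_{\mathbb H}(F,A)$. Given $U\in C^1_c(A;T\hn)$ with $|U|_{\eps,\infty}\leq 1$, I would split
\[
U=U_h+a\,\eps T,\qquad U_h\text{ horizontal},\quad a\in C^1_c(A).
\]
The orthonormality of $\{X_i,Y_i,\eps T\}$ gives $|U_h|^2+a^2\leq 1$ pointwise, so $U_h$ is admissible for $P_{\mathbb H}$ and $\|a\|_\infty\leq 1$. Since $T=\ptl_t$ in Euclidean coordinates, a direct calculation yields $\divv(a\,\eps T)=\eps\,\ptl_t a$, and therefore
\[
\int_F\divv U\,dx=\int_F\divv U_h\,dx+\eps\int_F\ptl_t a\,dx\leq P_{\mathbb H}(F,A)+\eps\left|\int_F\ptl_t a\,dx\right|.
\]
Using that $F$ has finite Euclidean perimeter in $A$ (Caccioppoli, together with \eqref{ineq:perieq}), integration by parts bounds the last integral by $\|a\|_\infty P(F,A)\leq P(F,A)$. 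Taking the supremum over $U$ and sending $\eps\to 0$ concludes the argument.

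The main obstacle lies precisely in this last estimate: although the constraint $|U|_{\eps,\infty}\leq 1$ forces $\|a\|_\infty\leq 1$, it provides no pointwise control on $\ptl_t a$, which can oscillate wildly at scale $\eps$. The vanishing of the vertical contribution as $\eps\to 0$ is thus a genuinely distributional fact, and it is the finite Euclidean perimeter of $F$ in $A$ that turns $\int_F\ptl_t a\,dx$ into a bounded quantity. Without such a $BV$ control, an admissible field $U$ could carry a non-trivial piece of divergence through its vertical component, and the limit would fail.
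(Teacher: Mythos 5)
Your proof is correct, and it is precisely the definition-chasing argument the paper has in mind (the proposition is stated there without proof, as following "by the above definitions"): monotonicity and \eqref{doublelsc} come from the inclusion of the horizontal test fields into the $\eps$-admissible ones, and the reverse inequality from the decomposition $U=U_h+a\,\eps T$ with $|U_h|^2+a^2\leq 1$, the identity $\divv(a\,\eps T)=\eps\,\partial_t a$, and the Gauss--Green bound $\bigl|\int_F\partial_t a\,dx\bigr|\leq P(F,A)$. Your closing remark correctly isolates the one genuine input, namely that the Caccioppoli hypothesis must be read as $P(F,A)<\infty$ so that the vertical contribution $\eps P(F,A)$ vanishes in the limit.
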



Moreover, the following compactness result holds.
\begin{proposition}\label{firstcompact}
    Let $A\subseteq\mathbb H^n$ be an open and $(E_k)_k$ be a sequence of finite $\mathbb H$-perimeter sets in $A$. Assume that there exists $M>0$ such that 
    $\sup_kP_{\mathbb H}(E_k,A)<M$.
    Then there exists a finite $\mathbb H$-perimeter set $E$ in $A$ such that $\chi_{E_k}\to\chi_E$ in $L^1_{loc}(A)$.
\end{proposition}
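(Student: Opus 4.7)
The plan is to proceed by the standard two-step argument: extract a subsequential $L^1_{loc}$ limit via a compact embedding, then check that the limit is a characteristic function and that $E$ has finite $\mathbb H$-perimeter by lower semicontinuity.

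First I would reduce to proving the statement on an arbitrary relatively compact open subset $A'\Subset A$ and then recover the $L^1_{loc}$ conclusion by a standard diagonal argument along an exhaustion $A_j\Subset A_{j+1}\Subset A$ with $\bigcup_j A_j=A$. On $A'$, the characteristic functions $\chi_{E_k}$ form a sequence in $BV_{\mathbb H}(A')$ with $\|\chi_{E_k}\|_{L^1(A')}\le |A'|$ uniformly and with $\text{Var}_{\mathbb H}(\chi_{E_k};A')\le P_{\mathbb H}(E_k,A)\le M$ by hypothesis. Hence I would invoke the Rellich--Kondrachov type compactness theorem for horizontal $BV$ functions in the Heisenberg (more generally Carnot--Carath\'eodory) setting of Garofalo--Nhieu, which guarantees that a bounded sequence in $BV_{\mathbb H}(A')$ is relatively compact in $L^1(A')$. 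This produces a subsequence $(\chi_{E_{k_j}})_j$ converging in $L^1(A')$ to some $f\in L^1(A')$.

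Next, since $\chi_{E_{k_j}}\in\{0,1\}$ and $L^1$ convergence implies pointwise a.e.\ convergence along a further subsequence, the limit $f$ takes values in $\{0,1\}$ almost everywhere and thus equals $\chi_E$ for a measurable set $E\subseteq A$. Running a diagonal extraction over the exhausting sequence $A_j\Subset A$ yields a subsequence (still denoted $(E_{k_j})_j$) such that $\chi_{E_{k_j}}\to\chi_E$ in $L^1_{loc}(A)$. Finally, to see that $E$ has finite $\mathbb H$-perimeter in $A$, I would appeal to the lower semicontinuity of $P_{\mathbb H}(\cdot,A)$ with respect to $L^1_{loc}(A)$ convergence, which is immediate from the supremum definition
\[
P_{\mathbb H}(E,A)=\sup\left\{\int_E \divv U\,dx\,:\,U\in C^1_c(A,\mathcal H),\,|U|_{1,\infty}\le 1\right\},
\]
together with dominated convergence applied to each fixed test field $U$; this gives $P_{\mathbb H}(E,A)\le\liminf_j P_{\mathbb H}(E_{k_j},A)\le M<\infty$.

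The conceptual heart of the argument is the compact embedding $BV_{\mathbb H}(A')\hookrightarrow L^1(A')$, which is not as elementary as in the Euclidean case because one only has horizontal derivatives at one's disposal; however, this is by now a classical result in sub-Riemannian analysis and the authors will simply quote it. Everything else, namely recovering that the $L^1$ limit of indicator functions is an indicator function, the diagonal extraction, and the lower semicontinuity, is routine. Thus I do not expect any substantive obstacle, and the proof should be short, essentially reducing to an application of the Heisenberg Rellich theorem combined with the structural lower semicontinuity built into the definition of $P_{\mathbb H}$.
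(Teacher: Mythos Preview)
Your proposal is correct and follows essentially the same approach as the paper: reduce to relatively compact subsets, invoke the Garofalo--Nhieu compactness for $BV_{\mathbb H}$ to extract an $L^1$-convergent subsequence, identify the limit as a characteristic function, and conclude finite $\mathbb H$-perimeter by lower semicontinuity, with a diagonal argument to globalize. The only technical difference is that the paper covers $A'$ by finitely many Carnot--Carath\'eodory balls and, before citing the compactness result, replaces each $\chi_{E_k}$ by a smooth $BV_{\mathbb H}$ approximant on each ball; this is done because the cited compactness theorem is stated on CC balls and for smooth functions, whereas you apply it directly on $A'$ to the characteristic functions themselves---a harmless shortcut provided the exhausting sets $A_j$ are chosen with enough regularity (e.g.\ CC balls or John domains) for the sub-Riemannian Rellich theorem to apply.
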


\begin{proof}
    Let $A'\Subset A$ open. Let $B_1,\ldots,B_k$ be a covering of $A'$ of Carnot-Carathéodory balls, i.e. the metric balls with respect to the distance introduced in \eqref{dccaprile}, such that
    \begin{equation*}
        A'\subseteq\bigcup_{j=1}^sB_j\Subset A.
    \end{equation*}
    Notice that $\chi_{E_k}\in BV_{\mathbb H}(B_j)$ for any $j=1,\ldots,k$ (cf. \cite{MR1437714} for the definition of $BV_{\mathbb H}$). Let us consider first $B_1$. In view of \cite[Theorem 2.2.2]{MR1437714}, there exists $v_k\in C^\infty(B_1)\cap BV_{\mathbb H}(B_1)$ such that
    \begin{equation*}
        \|v_k-\chi_{E_k}\|_{L^1(B_1)}\leq \frac{1}{k}\qquad\text{and}\qquad\left|P_{\mathbb H}(E_k,B_1)-\int_{B_1}|\nabla^\hh v_k|\,dx\right|\leq\frac{1}{k}
    \end{equation*}
    Therefore
    \begin{equation*}
        \|v_k\|_{L^1(B_1)}\leq |E_k\cap B_1|+1\qquad\text{and}\qquad
        \int_{B_1}|\nabla^\hh v_k|\,dx\leq \sup_{k}P_{\mathbb H}(E_k,A)+1,
    \end{equation*}
    so that $(v_k)_k$ is bounded in $BV_{\mathbb H}(B_1)$. Hence, \cite{MR1404326} implies that there exists $v\in BV_{\mathbb H}(B_1) $ such that, up to a subsequence, $v_k\to v$ in $L^1(B_1)$. This fact trivially implies the existence of a set $E^1\subseteq B_1$ such that, up to a subsequence, $\chi_{E_k}\to\chi _{E^1}$ in $L^1(B_1)$. The thesis then easily follows by a diagonal process and the lower semicontinuity of the $\mathbb H$-perimeter. \qedhere
    
\end{proof}

\subsection{$t$-graphs in $\hh^n$}

Given a bounded open set $\Om\subseteq\rr^{2n}$ and a measurable function $u:\Om\longrightarrow[-\infty,+\infty]$, we write the subgraph of $u$ as  
\begin{equation}\label{def:subgraph}
E_u=\{(z,t)\in\Om\times\rr\,:\,t<u(z)\}.
\end{equation}
A simple computation shows that, for $u\in W^{1,1}_{loc}(\Om)$ and $\tilde\Om\subseteq \Om$ open, the perimeter of $E_u$ in $\tilde\Om\times\rr$ can be computed as
\begin{equation}\label{periforw11}
    \mathcal A_\varepsilon(u,\tilde\Om)=\int_{\tilde\Om}\sqrt{\varepsilon^2+|D u+X|^2}\,dz,
\end{equation}
 where $X:\rr^{2n}\to\rr^{2n}$ is defined by $X(x,y)=(-y,x)$ and $Du$ is the gradient of $u$. The $L^1$-relaxation of $\mathcal A_\varepsilon$ for $u\in BV (\Om)$ is
 \begin{equation*}
     \overline{\mathcal A}_\varepsilon(u,\tilde\Om)=\inf\left\{\liminf_{k\to\infty}\mathcal{A}_\varepsilon(u_k,\tilde\Om):\,(u_k)_k\subseteq W^{1,1}(\Om),\,u_k\to u\text{ in }L^1(\tilde\Om) \right\}.
 \end{equation*}
We also define
\begin{equation*}
    S_\varepsilon (u,\tilde\Om)=\sup\left\{\int_{\tilde\Om}(-u\divv\tilde g+\langle X,\tilde g\rangle+\varepsilon g_{2n+1})\,dz:\,g=(\tilde g,g_{2n+1})\in C^1_c(\tilde\Om,\rr^{2n+1}),\,|g|\leq 1\right\}.
\end{equation*}
\begin{lemma}\label{equifun}
Given a bounded open set $\Om\subseteq\rr^{2n}$, $\tilde\Om\subseteq \Om$ open and $u\in L^1(\Om)$, then
    $$
P_\varepsilon(E_u,\tilde\Om\times\rr)=\overline{\mathcal A}_\varepsilon(u,\tilde\Om)=S_\varepsilon(u,\tilde\Om).
    $$
\end{lemma}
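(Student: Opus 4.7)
The plan is to follow Miranda's classical scheme for the identification of the perimeter of a subgraph with the relaxation of the area functional, adapted to $(\hh^n, g_\varepsilon)$.

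First I would treat the smooth case $u \in W^{1,1}(\Om)$: here $P_\varepsilon(E_u, \tilde\Om \times \rr) = \mathcal{A}_\varepsilon(u, \tilde\Om)$ by \eqref{periforw11}, $\overline{\mathcal A}_\varepsilon(u) = \mathcal A_\varepsilon(u)$ by the constant approximating sequence, and $S_\varepsilon(u) = \mathcal{A}_\varepsilon(u)$ by integrating by parts in the definition of $S_\varepsilon$, i.e.\ rewriting $\int_{\tilde\Om}-u\,\divv\tilde g\,dz = \int_{\tilde\Om}\langle Du, \tilde g\rangle\,dz$ (valid since $\tilde g\in C^1_c(\tilde\Om,\rr^{2n})$), and invoking Cauchy--Schwarz with the optimal choice $g = (Du + X, \varepsilon)/\sqrt{\varepsilon^2 + |Du+X|^2}$.

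Next I would establish lower semicontinuity of $u \mapsto P_\varepsilon(E_u, \tilde\Om \times \rr)$ under $L^1$ convergence (from $\|u_k - u\|_{L^1(\tilde\Om)} \geq |E_{u_k} \triangle E_u \cap (\tilde\Om \times (-M, M))|$ and LSC of the perimeter) and of $u \mapsto S_\varepsilon(u)$ (as a supremum of $L^1$-continuous linear functionals). Combined with the smooth case, this yields $\max\{S_\varepsilon(u), P_\varepsilon(E_u, \tilde\Om \times \rr)\} \leq \overline{\mathcal{A}}_\varepsilon(u, \tilde\Om)$ for every $u \in L^1(\Om)$, by considering $W^{1,1}$ approximating sequences and taking the infimum.

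The crucial step is to prove $S_\varepsilon(u) \leq P_\varepsilon(E_u, \tilde\Om \times \rr)$ directly. Given $g \in C^1_c(\tilde\Om, \rr^{2n+1})$ with $|g| \leq 1$ and a smooth cutoff $\varphi_R \in C^\infty_c(\rr)$ with $\varphi_R \equiv 1$ on $[-R, R]$ and $|\varphi_R| \leq 1$, consider the admissible vector field
\[ U_R(z,t) = \varphi_R(t)\left(\textstyle\sum_{i=1}^n g_i(z)\,X_i + \sum_{i=1}^n g_{n+i}(z)\,Y_i + g_{2n+1}(z)\,\varepsilon T\right) \in C^1_c(\tilde\Om \times \rr, T\hh^n), \]
which satisfies $|U_R|_\varepsilon \leq 1$. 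Using \eqref{base}, \eqref{equaldiv} and Fubini, a direct computation gives
\[ \int_{E_u}\divv U_R\,dz\,dt = \int_{\tilde\Om}\Phi_R(u(z))\,\divv \tilde g(z)\,dz + \int_{\tilde\Om}\varphi_R(u(z))\bigl(-\langle X, \tilde g\rangle + \varepsilon g_{2n+1}\bigr)\,dz, \]
where $\Phi_R(s) := \int_{-\infty}^s \varphi_R(t)\,dt$. Passing to the limit $R \to \infty$ by dominated convergence, controlled via $u \in L^1$ and compact support of $\tilde g$, and noting that the additive normalization constant in $\Phi_R$ is absorbed by $\int_{\tilde\Om}\divv \tilde g\,dz = 0$, one obtains $\int_{E_u}\divv U_R \to \int_{\tilde\Om}(u\,\divv\tilde g - \langle X, \tilde g\rangle + \varepsilon g_{2n+1})\,dz$. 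After the substitution $\tilde g \mapsto -\tilde g$ (which leaves $|g| \leq 1$ unchanged), the supremum of this expression over admissible $g$ equals $S_\varepsilon(u, \tilde\Om)$, and since each $U_R$ is admissible, $S_\varepsilon(u) \leq P_\varepsilon(E_u, \tilde\Om \times \rr)$.

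To close the chain via $\overline{\mathcal{A}}_\varepsilon(u) \leq S_\varepsilon(u)$, I would argue as follows. If $S_\varepsilon(u) = +\infty$ the bound is trivial. Otherwise the $BV$-integration-by-parts characterization $S_\varepsilon(u) = \int_{\tilde\Om}\sqrt{\varepsilon^2 + |D^a u + X|^2}\,dz + |D^s u|(\tilde\Om)$ shows $u \in BV(\tilde\Om)$; a standard strict $BV$ mollification yields $u_k \in C^\infty(\tilde\Om) \cap W^{1,1}(\tilde\Om)$ with $u_k \to u$ in $L^1(\tilde\Om)$ and $|Du_k|(\tilde\Om) \to |Du|(\tilde\Om)$. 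Reshetnyak's continuity theorem, applied to the convex continuous linear-growth integrand $(z,p) \mapsto \sqrt{\varepsilon^2 + |p + X(z)|^2}$, gives $\mathcal{A}_\varepsilon(u_k, \tilde\Om) \to S_\varepsilon(u, \tilde\Om)$, whence $\overline{\mathcal{A}}_\varepsilon(u) \leq S_\varepsilon(u)$, completing the chain $P_\varepsilon(E_u, \tilde\Om \times \rr) = S_\varepsilon(u, \tilde\Om) = \overline{\mathcal{A}}_\varepsilon(u, \tilde\Om)$. The main obstacle is the limit $R \to \infty$ in the cutoff construction: passing from $(z,t)$-dependent test fields to $z$-dependent ones requires a careful renormalization relying on the compact support of $\tilde g$ and the $L^1$ regularity of $u$.
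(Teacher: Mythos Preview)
Your proposal is correct and follows the same classical Miranda-type scheme that the paper invokes by referring to \cite[Theorem 3.2]{MR3276118}: the cutoff construction to get $S_\varepsilon\le P_\varepsilon$, lower semicontinuity for $P_\varepsilon\le\overline{\mathcal A}_\varepsilon$, and a $BV$ strict-approximation/Reshetnyak argument for $\overline{\mathcal A}_\varepsilon\le S_\varepsilon$. The only imprecision is in the last step, where you should invoke \emph{area-strict} convergence (i.e.\ $\int\sqrt{1+|Du_k|^2}\to |(Du,\mathcal L^{2n})|(\tilde\Om)$, which the standard Anzellotti--Giaquinta mollification also yields) rather than ordinary strict convergence, since the integrand $\sqrt{\varepsilon^2+|p+X(z)|^2}$ is not $1$-homogeneous.
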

\begin{proof}
    The proof follows exactly as the proof of \cite[Theorem 3.2]{MR3276118}.
\end{proof}
Recall that for any $u\in BV$, its distributional derivative $\tilde Du$ can be decomposed as the sum of the two mutually singular measures $Du\mathcal{L}^{2n}+(Du)_s$, where $Du\in L^1$ 
and $(Du)_s$ is singular with respect to the Lebesgue measure $\mathcal{L}^{2n}$.
\begin{lemma}\label{reprlemma}
    Given a bounded open set $\Om\subseteq\rr^{2n}$, $\tilde\Om\subseteq \Om$ open and $u\in BV(\Om)$, it holds that
    \begin{equation*}
P_\varepsilon(E_u,\tilde\Om\times\rr)=(Du)_s(\tilde\Om)+\int_{\tilde\Om}\sqrt{\varepsilon^2+|Du+X|^2}\,dz.
    \end{equation*}
\end{lemma}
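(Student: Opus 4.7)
The plan is to combine \Cref{equifun}, which identifies $P_\varepsilon(E_u,\tilde\Om\times\rr) = S_\varepsilon(u,\tilde\Om)$, with a duality computation reinterpreting $S_\varepsilon(u,\tilde\Om)$ as the total variation of a suitable $\rr^{2n+1}$-valued Radon measure on $\tilde\Om$. Once this identification is made, the Radon-Nikodym decomposition of $\tilde Du$ produces the absolutely continuous and singular terms appearing in the statement.

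First, for any $\tilde g\in C^1_c(\tilde\Om,\rr^{2n})$, the BV integration-by-parts formula applied to $u\in BV(\Om)$ yields
\[
\int_{\tilde\Om}(-u\divv\tilde g)\,dz = \int_{\tilde\Om}\tilde g\cdot d\tilde Du = \int_{\tilde\Om}\tilde g\cdot Du\,dz + \int_{\tilde\Om}\tilde g\cdot d(Du)_s.
\]
Introducing the vector-valued Radon measure $\mu$ on $\tilde\Om$ with values in $\rr^{2n+1}$ defined by
\[
\mu := \bigl(\tilde Du + X\mathcal{L}^{2n},\ \varepsilon\mathcal{L}^{2n}\bigr),
\]
the definition of $S_\varepsilon$ rewrites as
\[
S_\varepsilon(u,\tilde\Om) = \sup\left\{\int_{\tilde\Om} g\cdot d\mu \ :\ g=(\tilde g,g_{2n+1})\in C^1_c(\tilde\Om,\rr^{2n+1}),\ |g|\leq 1\right\}.
\]

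Next, I would argue that this sup coincides with the total variation $|\mu|(\tilde\Om)$. By a standard mollification argument, $C^1_c(\tilde\Om,\rr^{2n+1})$ is dense in $C_c(\tilde\Om,\rr^{2n+1})$ in the uniform norm, and approximation can be arranged while preserving the pointwise bound $|g|\leq 1$. The duality characterization of the total variation of $\rr^{2n+1}$-valued Radon measures then gives
\[
S_\varepsilon(u,\tilde\Om) = |\mu|(\tilde\Om).
\]

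Finally, the Radon-Nikodym decomposition provides $\mu = \mu_a + \mu_s$ with
\[
\mu_a = (Du + X,\ \varepsilon)\,\mathcal{L}^{2n}\qquad\text{and}\qquad \mu_s = \bigl((Du)_s,\ 0\bigr),
\]
where $\mu_a \ll \mathcal{L}^{2n}$ and $\mu_s \perp \mathcal{L}^{2n}$ since $(Du)_s$ is singular with respect to $\mathcal{L}^{2n}$. The mutual singularity yields $|\mu| = |\mu_a| + |\mu_s|$, with
\[
|\mu_a|(\tilde\Om) = \int_{\tilde\Om}\sqrt{\varepsilon^2 + |Du+X|^2}\,dz,\qquad |\mu_s|(\tilde\Om) = |(Du)_s|(\tilde\Om),
\]
and combining with \Cref{equifun} gives the claimed identity.

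The main subtlety to monitor is the passage from $C^1_c$ to $C_c$ test functions while preserving the constraint $|g|\le 1$; this is routine via mollification composed with a cutoff, but must be handled carefully so that the approximants remain admissible. All other steps — integration by parts for BV, the duality formula for total variation of vector-valued Radon measures, and the additivity of $|\cdot|$ on mutually singular components — are standard, and no delicate obstacle arises beyond this density check.
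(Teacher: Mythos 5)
Your proposal is correct and follows essentially the same route as the paper: identify $P_\varepsilon(E_u,\tilde\Om\times\rr)$ with $S_\varepsilon(u,\tilde\Om)$ via \Cref{equifun}, recognize $S_\varepsilon$ as the total variation of the vector measure $(\tilde Du+X\mathcal{L}^{2n},\varepsilon\mathcal{L}^{2n})$ (the paper does this via the Riesz representation theorem for the extended functional, you via BV integration by parts plus the duality formula and a density argument — the same content), and conclude by the Radon–Nikodym decomposition and additivity of total variation on mutually singular parts. The density check you flag is routine and handled exactly as you describe.
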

\begin{proof}
    Let us define $L:C^1_c(\Om,\rr^{2n+1})\longrightarrow \rr$ by
\begin{equation*}
    L(g)=\int_A(-u\divv \bar g+\langle X,\bar g\rangle+\varepsilon g_{2n+1})\,dz,
\end{equation*}
    where $g=(\bar g, g_{2n+1})$. $L$ is clearly linear. Moreover, since by \Cref{equifun} $S_\varepsilon(u)<+\infty,$ then $L$ extends to a linear bounded functional on $C^0_c(A,\rr^{2n+1})$. Therefore, by Riesz Theorem (cf. e.g. \cite[Theorem 1.54]{MR1857292}) there exists a unique $(2n+1)$-valued finite Radon measure $\mu$ such that
    \begin{equation*}
        L(g)=\int_A g\cdot d \mu\qquad\text{and}\qquad S_\varepsilon(u,A)=|\mu|(A)
    \end{equation*}
   for any $g\in C^1_c(A,\rr^{2n+1})$. By the uniqueness of such a measure it is easy to see that $\mu=(Du+Xd\mathcal{L}^{2n},\varepsilon d\mathcal{L}^{2n})$, and so 
   \begin{equation*}
       S_\varepsilon(u,A)=|(Du+Xd\mathcal{L}^{2n},\varepsilon d\mathcal{L}^{2n})|(A).
   \end{equation*}
   A trivial computation, together with \Cref{equifun}, concludes the proof.
\end{proof}

\section{Geometry of hypersurfaces in $(\hn,g_\eps)$}\label{sec:geomprop}
Along this section, we fix $\eps\neq 0$ and the metric $g_\eps$, and write as $\{Z_1,\ldots,Z_{2n},Z_{2n+1}\}$  the frame $\{X_1,\ldots, X_n,Y_1,\ldots,Y_n,\eps T\}$ to keep a compact notation. 
We let $S\subseteq\hh^n$ be an embedded orientable hypersurface of class $C^3$ with Riemannian unit normal $\ve$.
 We recall that
\begin{equation}\label{propv1}
    \sum_{j=1}^{2n+1}(Z_i\ve_j)\ve_j=0
\end{equation}
for any $i=1,\ldots,2n+1$ for any unitary extension of $\ve$, being $(\v_1,\ldots,\v_{2n+1})$ the coordinates of $\v$ related to $\{Z_1,\ldots,Z_{2n+1}\}$.
Moreover, if we denote by $d$ be the signed Riemannian distance from $S$,
%
%
then $d$ is of class $C^3$ near $S$, and satisfies the Eikonal equation $|\nabla d|=1$
 in a neighborhood of $S$, where we used the compact notation $\nabla=\nabla^\eps$. Therefore, $\ve$ can be extended to a suitable neighborhood of $S$ by letting $\ve=\nabla d$.
With this extension, we have
\begin{equation}\label{propv2}
    Z_i(\v_j)=Z_j(\v_i)
\end{equation}
 for any $i,j=1,\ldots,2n+1$ such that either $i=2n+1$, $j=2n+1$ or $|j-i|\neq n$.
  Finally,
\begin{equation}\label{propv3}
    X_i(\v_{n+i})=Y_i(\v_i)-\frac{2\vt}{\eps}\qquad\text{and}\qquad Y_i(\v_i)=X_i(\v_{n+i})+\frac{2\vt}{\eps}
\end{equation}
for any $i=1,\ldots,n$.
Hence
\begin{equation}\label{weird1}
    \sum_{j=1}^{2n+1}(Z_j\ve_i)\ve_j=-2\frac{\vet}{\eps}J(\ve)_i
\end{equation}
for any $i=1,\ldots,2n+1$, where we recall that $J(\ve)=(-\ve_{n+1},\ldots,-\ve_{2n},\ve_1,\ldots,\ve_n,0)$. 
We denote by $h^\eps$ and $H^\eps$ the associated \emph{second fundamental form} and \emph{mean curvature} of $S$ respectively, i.e., for a given $p\in S$, $h^\eps_p(v,w)=g_\eps(\nabla_{v}\ve,w)$
for any $v,w\in T_p S$, and
\begin{equation}\label{meancurvexpr}
    H^\eps(p)=\sum_{i=1}^{2n}h^\eps_p(e_i,e_i)=\sum_{i=1}^{2n}g_\eps( \nabla_{e_i}\ve,e_i)=\sum_{i=1}^{2n+1}Z_i\ve_i(p)
\end{equation}
for any orthonormal basis $e_1,\ldots, e_{2n}$ of $T_p S$. It is possible to express the norm of $h^\eps$ by
       \begin{equation}\label{riemsomma}
|h^\eps|^2=\sum_{l,s=1}^{2n+1}Z_s(\v_l)Z_l(\v_s)+4\left\langle J(\v),\nabla\left(\frac{\vt}{\eps}\right)\right\rangle+(2n-2)\frac{(\vt)^2}{\eps^2}+\frac{2}{\eps^2},
    \end{equation}
    so that, combining \eqref{ricciexpress} and \eqref{riemsomma}, we infer that
    \begin{equation}\label{needed inproposition}
       \ric(\ve)+ |h^\eps|^2=\sum_{l,s=1}^{2n+1}Z_s(\v_l)Z_l(\v_s)+4\left\langle J(\v),\nabla\left(\frac{\vt}{\eps}\right)\right\rangle+4n \frac{(\vt)^2}{\eps^2}.
    \end{equation}
Let us denote by $\nabla_S $ and $\Delta_S$ the gradient and the Laplace-Beltrami operator in $(S,g_\eps|_S)$ respectively.
A standard computation shows that
 \begin{equation}\label{lapbelt}
 \begin{split}
        &|\nabla_Sf|^2=|\nabla f|^2-g_\eps(\nabla f,\ve)^2\\
        &\Delta _S f=\sum_{i,j=1}^{2n+1}g^{i,j}Z_i(Z_j f)-H^\eps g_\eps(\nabla  f,\ve)+\frac{2\vet}{\eps}\langle\nabla  f,J(\ve)\rangle
        \end{split}
    \end{equation}
 for any $f\in C^2(S)$, where \begin{equation}\label{invmetric}
 g^{i,j}=\delta_{i,j}-\ve_i\ve_j.
 \end{equation}
 In particular, applying \eqref{lapbelt} to $\vet$ and exploiting \eqref{needed inproposition}, we infer that
    \begin{equation*}\label{precodige'}
        \begin{split}
            \Delta_S (\vet)&=g_\eps\left(\nabla_S H^\eps, Z_{2n+1}\right) -\vet\left(\ric(\ve,\ve)+|h^\eps|^2\right).
        \end{split}
    \end{equation*}

\section{Interior and global gradient estimates}\label{sec:ige}
Throughout this section, we fix $\eps\neq 0$ and the metric $g_\eps$, and we write as $\{Z_1,\ldots,Z_{2n},Z_{2n+1}\}$  the orthonormal frame $\{X_1,\ldots, X_n,Y_1,\ldots,Y_n,\eps T\}$. Moreover, we fix a bounded domain $\Om\subseteq \rr^{2n}$ and $H\in C^1(\Om)$.
We shall provide interior and global gradient estimates for $C^3$ solutions to \eqref{pmc'}.  Our approach follows the technique developed in \cite{MR0843597,MR0932680}. Given $u\in C^3(\Om)$, we denote by $\ve$ the Riemannian normal to the hypersurface $S=\graf(u)$, which can be globally extended to $\Om\times\rr$ through vertical translations by 
\begin{equation}\label{riemnormofgraph}
\ve(z,t)=-\sum_{i=1}^{2n}\frac{D_iu(z)+X_i(z)}{\sqrt{\eps^2+|Du(z)+X(z)|^2}}Z_j|_{(z,t)}+\frac{\eps}{\sqrt{\eps^2+|Du(z)+X(z)|^2}}Z_{2n+1}|_{(z,t)}
\end{equation}
for any $(z,t)\in\Om\times\rr$.
Moreover, given $f\in C^3(\Om)$, we can consider $f$ as a $C^3$ function on $S$ or on $\Om\times\rr$ by letting
    $f(z,u(z))=f(z)$ and $f(z,t)=f(z)$ respectively. In particular, it holds that $\nabla f=(Df,0)$. We begin with the following preliminary result.

\begin{proposition}
   Let $u\in C^3(\Om)$ be a classical solution to \eqref{pmc'} for $H\in L^\infty(\Om)\cap C^1(\Om)$. Then
    \begin{equation}\label{boundbelowlapl}
        \Delta _S u\geq -\|H\|_\infty(|\eps|+\max_{\bar\Om}|X|)-\frac{2}{|\eps|}\max_{\overline\Om}{|X|}.
    \end{equation} 
\end{proposition}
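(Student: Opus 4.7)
The plan is to exploit the trivial but crucial identification of $u$ on $S=\graf(u)$ with the height function $\phi(z,t)=t$: both yield $u(z)$ when evaluated at a point $(z,u(z))\in S$, so $\Delta_S u=\Delta_S\phi$. Since $\phi$ is an extremely simple ambient function, this avoids the heavier extension $\bar u(z,t)=u(z)$ and allows a clean application of \eqref{lapbelt} with $f=\phi$.

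A direct computation in the frame $\{Z_1,\ldots,Z_{2n+1}\}$, using $X_i(t)=y_i$, $Y_i(t)=-x_i$ and $(\eps T)(t)=\eps$, shows that $\nabla\phi$ has coordinates $(y_1,\ldots,y_n,-x_1,\ldots,-x_n,\eps)$, i.e.~$(-X_1,\ldots,-X_{2n},\eps)$ when by $X_j$ one denotes the $j$-th component of the vector field $X=(-y,x)$. The only nonzero entries of the matrix $(Z_iZ_j\phi)_{i,j}$ are $Z_iZ_{n+i}\phi=-1$ and $Z_{n+i}Z_i\phi=1$ for $i\leq n$, so this matrix is antisymmetric; since $g^{i,j}=\delta_{i,j}-\ve_i\ve_j$ is symmetric, the Hessian-like term in \eqref{lapbelt} vanishes. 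Moreover, comparing \eqref{meancurvexpr} with \eqref{pmc'} via \eqref{riemnormofgraph} gives $H^\eps=-H$. Writing $W=\sqrt{\eps^2+|Du+X|^2}$, a short calculation yields
\begin{equation*}
g_\eps(\nabla\phi,\ve)=\frac{\langle X,Du+X\rangle+\eps^2}{W},\qquad\langle\nabla\phi,J(\ve)\rangle=\frac{\langle Du,(x,y)\rangle}{W},
\end{equation*}
so that \eqref{lapbelt} reduces to
\begin{equation*}
\Delta_S u=H\cdot\frac{\langle X,Du+X\rangle+\eps^2}{W}+\frac{2\langle Du,(x,y)\rangle}{W^2}.
\end{equation*}

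The lower bound \eqref{boundbelowlapl} then follows by two elementary estimates. For the first term, Cauchy--Schwarz combined with $|Du+X|\leq W$ gives $|\langle X,Du+X\rangle|\leq|X|W$, and $\eps^2\leq|\eps|W$ since $W\geq|\eps|$; hence $\left|H\cdot\frac{\langle X,Du+X\rangle+\eps^2}{W}\right|\leq\|H\|_\infty(|\eps|+\max_{\overline\Om}|X|)$. For the second term, the key cancellation is the orthogonality $\langle X,(x,y)\rangle=\langle(-y,x),(x,y)\rangle=0$, which yields $\langle Du,(x,y)\rangle=\langle Du+X,(x,y)\rangle$; since $|(x,y)|=|X|$, Cauchy--Schwarz then gives $|\langle Du,(x,y)\rangle|\leq W|X|$, and hence $\left|\frac{2\langle Du,(x,y)\rangle}{W^2}\right|\leq\frac{2|X|}{W}\leq\frac{2\max_{\overline\Om}|X|}{|\eps|}$. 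Summing these estimates produces exactly \eqref{boundbelowlapl}. I do not expect any serious obstacle: the substance lies in the identification $u|_S=\phi|_S$ (which sidesteps intrinsic computations) and the cancellation $X\perp(x,y)$, a direct manifestation of the Heisenberg symplectic structure.
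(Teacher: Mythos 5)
Your argument is correct, and every intermediate identity checks out: the two routes in fact land on the \emph{same} formula, since $-\tfrac{2}{W}\langle X,J(\ve)\rangle=\tfrac{2}{W^2}\langle Du,(x,y)\rangle$. The difference from the paper is the choice of ambient representative fed into \eqref{lapbelt}. The paper extends $u$ vertically, so $\nabla u=(Du,0)$, and must first observe that the second-order term $\sum_{i,j}g^{i,j}Z_iZ_ju$ equals $H\sqrt{\eps^2+|Du+X|^2}$ — i.e.\ it is exactly the left-hand side of \eqref{pmc'} rewritten in the frame $\{Z_i\}$ — after which this term partially cancels against $-H^\eps g_\eps(\nabla u,\ve)$ to leave $\tfrac{H}{W}(\eps^2+\langle X,Du+X\rangle)$. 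You instead use the height function $\phi(z,t)=t$, for which the matrix $(Z_iZ_j\phi)$ is antisymmetric and the second-order term dies against the symmetric $g^{i,j}$ with no appeal to the equation; the prescribed-curvature hypothesis then enters only through $H^\eps=-H$. Your version is marginally cleaner in that the PMC equation is used exactly once, and the final cancellation $\langle X,(x,y)\rangle=0$ is the same symplectic orthogonality the paper exploits as $\langle Du+X,J(\ve)\rangle=0$, just phrased on the other factor. Both arguments require the same hypotheses and yield the identical constant in \eqref{boundbelowlapl}, so this is a legitimate, self-contained alternative proof.
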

\begin{proof}
    Since $Z_{2n+1}u\equiv 0$, \eqref{riemnormofgraph} implies that
    \begin{equation*}
        \begin{split}
            \divv\left(\frac{Du+X}{\sqrt{\eps^2+|Du+X|^2}}\right)&=\frac{\Delta u}{\sqrt{\eps^2+|Du+X|^2}}+\left<Du+X,D\left((\eps^2+|Du+X|^2)^{-\frac{1}{2}}\right)\right>\\
            &=\frac{\Delta u}{\sqrt{\eps^2+|Du+X|^2}}-\frac{\sum_{i,j=1}^{2n}D_iD_ju(Du+X)_i(Du+X)_j}{(\eps^2+|Du+X|^2)^{\frac{3}{2}}}\\
            &=\frac{1}{\sqrt{\eps^2+|Du+X|^2}}\left(\sum_{i=1}^{2n+1}Z_iZ_i u-\sum_{i,j=1}^{2n+1}Z_iZ_ju\ve_i\ve_j\right)\\
            &=\frac{1}{\sqrt{\eps^2+|Du+X|^2}}\left(\sum_{i,j=1}^{2n+1}g^{i,j}Z_iZ_ju\right),
        \end{split}
    \end{equation*}
    and hence
    \begin{equation*}
        H\sqrt{\eps^2+|Du+X|^2}=\sum_{i,j=1}^{2n+1}g^{i,j}Z_iZ_ju.
    \end{equation*}
    Since $u$ solves \eqref{pmc'}, then our choice of $\ve$ in \eqref{riemnormofgraph} implies that $H^\eps = -H$. Hence, by \eqref{lapbelt},
    \begin{equation*}
    \begin{split}
        \Delta_S  u&=H\sqrt{\eps^2+|Du+X|^2}-\frac{H}{\sqrt{\eps^2+|Du+X|^2}}\langle D u,Du+X\rangle+\frac{2}{\sqrt{\eps^2+|Du+X|^2}}\langle D u,J(\ve)\rangle\\
        &=\frac{H}{\sqrt{\eps^2+|Du+X|^2}}\left(\eps^2+|Du+X|^2-\langle D u,Du+X\rangle\right)+\frac{2}{\sqrt{\eps^2+|Du+X|^2}}\langle D u,J(\ve)\rangle\\
        &=\frac{H}{\sqrt{\eps^2+|Du+X|^2}}\left(\eps^2+\langle X,Du+X\rangle\right)-\frac{2}{\sqrt{\eps^2+|Du+X|^2}}\langle X,J(\ve)\rangle\\
    \end{split}
    \end{equation*}
    Finally, \eqref{boundbelowlapl} follows at once from the previous computation.
\end{proof}
We are ready to prove \Cref{mainige'}.
\begin{proof}[Proof of \Cref{mainige'}]
Let $\tilde\Om$ and $\hat\Om$ be as in the statement, and let $r\in(0,d(\partial\hat\Om,\partial\tilde\Om))$ be fixed. In this way, $B(z_0,r)\Subset\tilde\Om$ for any $z_0\in\hat\Om$. Fix then $z_0\in\hat\Om$. We set $\gamma_1=\|u\|_{L^\infty(\tilde\Om)}$ and $\gamma_2=\|H\|_{C_1(\overline{\tilde\Om})}$.
    Let $\varphi\in C^\infty(\overline{B(z_0,r)})$ be the paraboloid centered at $z_0$ such that $\varphi(z_0)=u(z_0)-1$ and $\varphi(z)=\gamma_1$ for any $z\in\partial B(z_0,r)$, and let $\gamma_3=\|\varphi\|_{C^2(\overline{B(z_0,r)})}$.
    Notice that $\gamma_3=\gamma_3(r,\gamma_1)$. 
    We define 
\begin{equation*}
    \eta(t)=\left(e^{Kt}-1\right)e^{-(\gamma_1+\gamma_3)K}
\end{equation*}
for any $t\in\rr$, where $K>0$ is a constant to be chosen later. Notice that $\eta\in C^\infty(\rr)$ and  $0\leq\eta((u-\varphi)^+(z))\leq 1$ for any $z\in\overline{B(z_0,r)}$. Since the function $\Phi:\overline{B(z_0,r)}\longrightarrow \rr $ defined by
\begin{equation*}
    \Phi(z)= \frac{\varepsilon\cdot\eta((u-\varphi)^+(z))}{\vet(z)}
\end{equation*}
is continuous, we can denote by $M$ its maximum over $\overline{B(z_0,r)}$. Moreover, by the choice of $\varphi$, it holds that $\Phi\equiv 0$ on $\partial B(z_0,r)$ and $\Phi(z_0)>0$. Therefore the maximum $M$ is achieved at some point $\tilde z\in B(z_0,r)$. Since $M\geq\Phi(z_0)>0$, then $(u-\varphi)^+=u-\varphi$ locally near $\tilde z$ and $\Phi$ is of class $C^2$ in a neighborhood of $\tilde z$. In particular, 
\begin{equation*}
    \Psi(z):=\eta((u-\varphi)^+(z))-M\frac{\vet(z)}{\eps}\leq 0
\end{equation*}
for any $z\in\overline {B(z_0,r)}$, and 
\begin{equation}\label{fermatige}
    \Psi(\tilde z)=0,\qquad\nabla _S\Psi(\tilde z)=0\qquad\text{and}\qquad\Delta _S\Psi(\tilde z)\leq 0.
\end{equation}
We claim that there exists $M_0=M_0\left(\eps,\|X\|_{L^\infty(\Om)}\right)$ such that, if $f\in C^2(\Om)$ is any solution to \eqref{pmc'} satisfying 
\begin{equation}\label{supboundige}
    \|f\|_{L^\infty(B(z_0,r))}\leq\gamma_1,
\end{equation}
and $\varphi$, $M$ and $\tilde z$ are as above, then $M\geq M_0$ implies that
    \begin{equation}\label{fundstimaige}
        |\nabla_S (f-\varphi)|^2(\tilde z)>\frac{\eps^2}{2}.
    \end{equation}
    Indeed, let $(f_k)_k$ be a sequence such that
    \begin{equation}\label{contradictionige}
        M_k\geq k\qquad\text{and}\qquad |\nabla_S (f_k-\varphi_k)|^2(\tilde z)\leq \frac{\eps^2}{2},
    \end{equation}
where $M_k$ and $\tilde z_k$ are defined as above. Notice that
\[
\sqrt{\eps^2+|Df_k+X|^2(\tilde z_k)}\geq  \Phi(\tilde z_k)\geq k
\]
and, being $X$ bounded over $\overline\Om$, $|Df_k(\tilde z_k)|$ diverges to $+\infty$ as $k\to +\infty$. Moreover, by \eqref{lapbelt},
    \begin{equation*}
        \begin{split}
            |\nabla_S (f_k-\varphi_k)|^2&=|\nabla (f_k-\varphi_k)|^2-\langle\nabla  (f_k-\varphi_k),\ve\rangle^2\\
            &=|Df_k-D\varphi_k|^2-\frac{\langle Df_k-D\varphi_k,Df_k+X\rangle^2}{\eps^2+|Df_k+X|^2}\\
            &\geq\frac{\eps^2|Df_k-D\varphi_k|^2}{\eps^2+|Df_k+X|^2}.
        \end{split}
    \end{equation*}
    Hence, since $(D\varphi_k(\tilde z_k))_k$ bounded by \eqref{supboundige}, we get
    \begin{equation*}
        \liminf_{k\to+\infty}|\nabla_S (f_k-\varphi_k)(\tilde z_k)|^2\geq\lim_{k\to+\infty}\frac{\eps^2|Df_k-D\varphi_k|^2(\tilde z_k)}{\eps^2+|Df_k+X|^2(\tilde z_k)}=\eps^2,
    \end{equation*}
    which  contradicts \eqref{contradictionige}. We claim that $M\leq M_0$ for suitable choices of $K$. Indeed, for a fixed $K>0$, assume that $M>M_0$. Hence \eqref{fundstimaige} holds. Notice that $(u-\varphi)^+=u-\varphi$ locally around $\tilde z$.
Notice that $H^\eps(z,t)=-H(z)$ for any $z\in \Om$ since $u$ solves \eqref{pmc'}, and $H$ is extended vertically on $\Om\times \rr$. Exploiting \eqref{precodige}, \eqref{ricciexpress}, \eqref{boundbelowlapl}, \eqref{fermatige} and \eqref{fundstimaige}, and recalling that $u$ solves \eqref{pmc'}, we infer that, at $\tilde z$,
\begin{equation}\label{toxo}
    \begin{split}
        \Delta _S\Psi&=\Delta _S(\eta(u-\varphi))-\frac{M}{\eps}\Delta _S\vet\\
        &=\eta''|\nabla _S(u-\varphi)|^2+\eta'\left(\Delta _S u-\Delta _S\varphi\right)+\frac{M}{\eps}\big(g_\eps(\nabla_SH,\eps T)+\vet\left(\ric_\eps(\ve,\ve)+|h^\eps|^2\right)\big)\\
        &=\eta''|\nabla _S(u-\varphi)|^2+\eta'\left(\Delta _S u-\Delta _S\varphi\right)+\frac{M\vet}{\eps}\big(-g_\eps(\nabla H,\ve)+\ric_\eps(\ve,\ve)+|h^\eps|^2\big)\\
        &=\eta''|\nabla _S(u-\varphi)|^2+\eta'\left(\Delta _S u-\Delta _S\varphi\right)+\eta \big(-g_\eps(\nabla H,\ve)+\ric_\eps(\ve,\ve)+|h^\eps|^2\big)     \\
          &\geq\frac{\eps^2}{2}\eta''-C\eta'-C\eta
    \end{split}
\end{equation}
for some $C=C(n,\eps,r,\gamma_2,\gamma_3)>0$. Hence, since $\Delta _S\Psi(\tilde z)\leq 0$ and up to choosing a different constant $C=C(n,\eps,r,\gamma_2,\gamma_3)>0$, we conclude that
\begin{equation}\label{ODEige}
    \eta''((u-\varphi)(\tilde z))-C\eta'((u-\varphi)(\tilde z))-C^2\eta((u-\varphi)(\tilde z))\leq 0.
\end{equation}
The choice $K=2C$ is in contradiction with \eqref{ODEige}, so that, for this choice of $K$, we must have $M\leq M_0$. Hence 
\[
\eta((u-\varphi)^+(z))-M_0\frac{\vet(z)}{\eps}\leq 0.
\]
Since $(u-\varphi)^+(z_0)=1$, we conclude that
\begin{equation*}
    |Du|(z_0)\leq \sqrt{\eps^2+|Du+X|(z_0)^2}+|X|(z_0)\leq\frac{M_0}{\eta(1)}+|X|(z_0),
\end{equation*}
whence the thesis follows. 
\end{proof}
 An approach as in \Cref{mainige'} allows to reduce global gradient estimates for solutions to \eqref{pmc'} to boundary gradient estimates. For future convenience, we state the result for a slightly more general class of equations. 
\begin{theorem}[From boundary to global gradient estimates]\label{mainigeglobal}
    Let $H\in C^1(\overline\Om)$ and $\sigma\in [0,1]$, and let $u\in C^3(\Om)\cap C^1(\overline\Om)$ be a solution to 
    \begin{equation}\label{sigmaeq}
         \divv\left(\frac{Du+\sigma X}{\sqrt{\varepsilon^2+|Du+\sigma X|^2}}\right)=\sigma H(z)
    \end{equation}
    on $\Om$. Then there exists $C=C\left(n, \eps,\|u\|_{L^\infty(\Om)},\|H\|_{C^1(\overline\Om)},\|X\|_{L^\infty(\Om)}\right)>0,$ thus independent of $\sigma\in[0,1]$, such that
    \begin{equation*}\label{ige2024}
    \|Du\|_{L^\infty(\Om)}\leq C{\Big( \|Du\|_{L^\infty(\ptl\Om)}+1\Big)
    }.
    \end{equation*}
\end{theorem}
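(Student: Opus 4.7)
The plan is to adapt the proof of \Cref{mainige'} by replacing the local paraboloid barrier with an auxiliary function defined on all of $\overline\Om$, and by distinguishing whether the maximum of this function is attained at the boundary or in the interior. Set $\varphi\equiv -\|u\|_{L^\infty(\Om)}-1$, so that $u-\varphi\geq 1$ throughout $\overline\Om$ and $D\varphi\equiv 0$. For a constant $K>0$ to be chosen, define
\[
\eta(t)=(e^{Kt}-1)\,e^{-K(2\|u\|_{L^\infty(\Om)}+1)},\qquad
\Phi(z)=\frac{\eps\,\eta(u(z)-\varphi)}{\vet(z)},
\]
where $\ve$ is the upward unit normal to $\graf(u)$, extended vertically to $\overline\Om\times\rr$ as in \eqref{riemnormofgraph} with $X$ replaced by $\sigma X$, so that $\vet=\eps/\sqrt{\eps^2+|Du+\sigma X|^2}$. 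Let $M=\max_{\overline\Om}\Phi$, attained at some $\tilde z\in\overline\Om$.

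If $\tilde z\in\partial\Om$, then $\eta(u-\varphi)\leq 1$ on the relevant range by the choice of normalization, and
\[
M\leq\frac{\eps}{\vet(\tilde z)}=\sqrt{\eps^2+|Du(\tilde z)+\sigma X(\tilde z)|^2}\leq C_1\bigl(\|Du\|_{L^\infty(\partial\Om)}+1\bigr)
\]
with $C_1=C_1(\eps,\|X\|_{L^\infty(\Om)})$ of the required form. If instead $\tilde z\in\Om$, the function $\Psi:=\eta(u-\varphi)-M\vet/\eps$ satisfies $\Psi\leq 0$ on $\overline\Om$ and $\Psi(\tilde z)=0$, so $\nabla_S\Psi(\tilde z)=0$ and $\Delta_S\Psi(\tilde z)\leq 0$. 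The interior argument of \Cref{mainige'} now goes through with two simplifications: since $\varphi$ is constant all derivatives of $\varphi$ vanish, and since $|\sigma|\leq 1$ the quantities $\sigma X$ and $\sigma H$ are dominated by $X$ and $H$. Combining the lower bound \eqref{boundbelowlapl} applied to \eqref{sigmaeq}, the identity \eqref{precodige} and the expression \eqref{ricciexpress}, one finds a threshold $M_0=M_0(\eps,\|X\|_{L^\infty(\Om)})$ such that $M>M_0$ forces $|\nabla_S u|^2(\tilde z)>\eps^2/2$. The analogue of the chain \eqref{toxo} then yields at $(u-\varphi)(\tilde z)$ an ODE inequality of the form $\tfrac{\eps^2}{2}\eta''-C\eta'-C\eta\leq 0$ with $C=C(n,\eps,\|H\|_{C^1(\overline\Om)},\|X\|_{L^\infty(\Om)})$ independent of $\sigma\in[0,1]$; choosing $K=2C$ contradicts this inequality, so $M\leq M_0$ in the interior case.

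Combining the two cases, $M\leq C_2\bigl(\|Du\|_{L^\infty(\partial\Om)}+1\bigr)$ with $C_2$ of the required form. For every $z\in\Om$ we have $u(z)-\varphi\geq 1$, hence $\eta(u(z)-\varphi)\geq\eta(1)>0$ with $\eta(1)$ depending only on the allowed parameters, and therefore
\[
\sqrt{\eps^2+|Du(z)+\sigma X(z)|^2}=\frac{\eps}{\vet(z)}=\frac{\Phi(z)}{\eta(u(z)-\varphi)}\leq\frac{M}{\eta(1)}\leq C_3\bigl(\|Du\|_{L^\infty(\partial\Om)}+1\bigr).
\]
The trivial bound $|Du(z)|\leq\sqrt{\eps^2+|Du(z)+\sigma X(z)|^2}+\|X\|_{L^\infty(\Om)}$ then yields the desired estimate on $\|Du\|_{L^\infty(\Om)}$.

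The main obstacle is keeping every constant independent of $\sigma$. This requires verifying that the interior threshold $M_0$, the ODE constant $C$ and hence $K=2C$ can all be chosen uniformly for $\sigma\in[0,1]$: the $\sigma$-dependent terms entering \eqref{boundbelowlapl} and the Heisenberg analogue of \eqref{toxo} are each dominated by their $\sigma=1$ counterparts, while the Ricci formula \eqref{ricciexpress}, the identity \eqref{precodige} and the formulae \eqref{lapbelt} are $\sigma$-independent, so the claimed uniformity follows.
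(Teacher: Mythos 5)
Your proposal is correct and follows essentially the same route as the paper: the paper likewise sets $\Phi=\eps\,\eta(u)/\vet$ with an exponential weight $\eta(t)=e^{K(t-\gamma_1)}$, lets $M$ be its maximum over $\overline\Om$, handles $\tilde z\in\partial\Om$ by the trivial bound $\eps/\vet(\tilde z)=\sqrt{\eps^2+|Du+\sigma X|^2}(\tilde z)$ and $\tilde z\in\Om$ by repeating the interior argument of \Cref{mainige'} (threshold $M_0$, the chain \eqref{toxo}, and a large choice of $K$ to contradict the resulting ODE inequality). Your use of a constant shift $\varphi$ and a slightly different normalization of $\eta$ is only cosmetic, and your explicit remark that the $\sigma$-dependent terms are dominated by their $\sigma=1$ counterparts matches what the paper leaves implicit; the only nitpick is that with your normalization the contradiction requires $K$ large enough rather than literally $K=2C$, a bookkeeping point the paper also handles by relabeling constants.
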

\begin{proof}
    Let us set $\gamma_1=\|u\|_{L^\infty(\Om)}$. Let $\eta(t)=e^{K(t-\gamma_1)}$, where $K>0$ is a fixed constant to be chosen later. Notice that 
    \begin{equation}\label{boundforetadue}
        e^{-2K\gamma_1}\leq \eta(u(z))\leq 1
    \end{equation}
    for any $z\in\overline\Om$.
     Let $\Phi:\bar\Om\to \rr$ be defined by
    \[
    \Phi(z)=\frac{\eps(\eta(u(z))}{\vet(z)}.
    \]
    Since $u\in C^1(\overline\Om)$, then $\Phi\in C(\overline\Om)$, so that $\Phi$ achieves its maximum $M$ at some point $\tilde z\in\overline\Om$. Suppose first $\tilde z\in\Om$. Arguing as in the proof of \Cref{mainige'}, and thanks to \eqref{boundforetadue}, there exists $M_0=M_0\left(\eps,\|X\|_{L^\infty(\Om)}\right)$ such that, if $f\in C^2(\Om)$ is any solution to \eqref{pmc'} satisfying \eqref{supboundige},
and $M$ and $\tilde z$ are as above, then $M\geq M_0$ implies that
    \begin{equation}\label{fundstimaige2}
        |\nabla_S f|^2(\tilde z)>\frac{\eps^2}{2}.
    \end{equation}
    Repeating the computations of \eqref{toxo},  and exploiting \eqref{fundstimaige2}, we get that, for a particular choice of $K=K(n,\eps,\|X\|_{L^\infty(\Om)},\|H\|_{C^1(\bar\Om)})>0$, it holds $M\leq M_0$. Hence, we get that
    \begin{equation*}
        \eta(u(z))\sqrt{\eps^2+|Du+X|(z)^2}\leq\eta(u(\tilde z))\sqrt{\eps^2+|Du+X|(\tilde z)^2}\leq M_0
    \end{equation*} 
   for any $z\in\overline\Om$, so that, by \eqref{boundforetadue},
\begin{equation*}
    |Du|(z)\leq\sqrt{\eps^2+|Du+X|(z)^2}+|X|(z)\leq M_0e^{2K\gamma_1}+\|X\|_{L^\infty(\Om)}
\end{equation*}
for any $z\in\overline\Om$. 
 If instead $\tilde z\in\partial\Om$,
    then
    \begin{equation*}
    \begin{split}
          |Du|(z)&\leq e^{2K\gamma_1}\eta(u(z))\sqrt{\eps^2+|Du+X|(z)^2}+|X|(z)\\
          &\leq e^{2K\gamma_1}\eta(u(\tilde z))\sqrt{\eps^2+|Du+X|(\tilde z)^2}+\|X\|_{L^\infty(\Om)}\\
          &\leq e^{2K\gamma_1}\left(|\eps|+\|Du\|_{L^\infty(\partial\Om)}+\|X\|_{L^\infty(\Om)}\right)+\|X\|_{L^\infty(\Om)}
    \end{split}
    \end{equation*}
  for any $z\in\overline\Om$, whence the thesis follows.
\end{proof}

As a corollary of \Cref{mainigeglobal}, we provide global gradient estimates for solutions to the Dirichlet problem associated with \eqref{pmc'} when \eqref{Serrinstrict} holds.
\begin{theorem}[Global gradient estimates]\label{mainigeglobaldue}
    Let $\Om$ be a bounded domain with boundary of class $C^2$.
    Let $H\in C^1(\overline\Om)$ be such that \eqref{Serrinstrict} holds.
    Let $\varphi\in C^2(\overline\Om)$, $\eps\neq 0$ and $\sigma\in [0,1]$.   Assume that there exist a constant $\gamma_1=\gamma_1(n,\eps,\Om,\varphi,X,H)>0$, independent of $\sigma\in[0,1]$, such that any solution $u\in C^2(\overline\Om)$ to 
    \begin{equation}\label{sigmaeqpmc2}
    \begin{cases} \divv\left(\frac{Du+\sigma X}{\sqrt{\varepsilon^2+|Du+\sigma X|^2}}\right)=\sigma H(z)&\text{ in  }\Om\\
			u=\sigma\varphi &\text{ in } \partial\Om
		\end{cases}
    \end{equation}
satisfies $\|u\|_{L^\infty(\Om)}\leq \gamma_1$. Then there exists $C=C\left(n, \eps,\|\varphi\|_{C^2(\overline\Om)},\gamma_1,\|H\|_{C^1(\overline\Om)},\|X\|_{L^\infty(\Om)}\right)>0,$ thus independent of $\sigma\in[0,1]$, such that any $u\in C^3(\Om)\cap C^1(\overline\Om)$ solution to \eqref{sigmaeqpmc2} satisfies
    \begin{equation*}\label{ige20243}
    \|Du\|_{L^\infty(\Om)}\leq C.
    \end{equation*}
\end{theorem}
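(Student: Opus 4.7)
The plan is to apply \Cref{mainigeglobal} and thereby reduce the statement to a uniform boundary gradient estimate, that is, a bound on $\|Du\|_{L^\infty(\ptl\Om)}$ independent of $\sigma\in[0,1]$. Since $u=\sigma\varphi$ on $\ptl\Om$, the tangential component of $Du$ at each boundary point is $\sigma D_\tau\varphi$ and is therefore controlled by $\|\varphi\|_{C^1(\overline\Om)}$ uniformly in $\sigma$. Thus the real task is to bound the inner normal derivative $|\ptl_\nu u(z_0)|$ at every $z_0\in\ptl\Om$, uniformly in $\sigma\in[0,1]$.

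To achieve this, I will adapt the Serrin-type barrier construction (cf.\ \cite{GT,MR2332426,pmc1}) to the $\sigma$-deformed equation \eqref{sigmaeqpmc2}. The strict inequality \eqref{Serrinstrict}, combined with the regularity of $\ptl\Om$, produces $\delta_0>0$ and $\eta_0>0$ such that for every $t\in[0,\delta_0]$ the Euclidean mean curvature of the parallel hypersurface $\{d(\cdot,\ptl\Om)=t\}$ satisfies $H_{\{d=t\}}\ge |H|+\eta_0$ near $\ptl\Om$; since $|\sigma H|\le|H|$, the same gap $\eta_0$ survives for every $\sigma\in[0,1]$. For a fixed $z_0\in\ptl\Om$ I will seek an upper barrier of the form
\begin{equation*}
w^+(z)=\sigma\varphi(z)+\psi(d(z)),\qquad \psi(t)=\tfrac{1}{\mu}\log(1+\kappa t),
\end{equation*}
on the tubular neighborhood $N_\delta=\{z\in\Om:d(z)<\delta\}$ with $\delta\le\delta_0$, and a lower barrier $w^-$ by reversing the sign of $\psi$.

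The next step is to substitute $w^+$ into the divergence operator in \eqref{sigmaeqpmc2}. A direct computation shows that the leading term as $\psi'\to\infty$ is $-H_{\{d=t\}}$, corrected by bounded quantities coming from $D\varphi$, $D^2\varphi$, $X$, $DX$, $\sigma H$, and $\eps$. Choosing $\mu$ large, in terms of $\eta_0$, $\|\varphi\|_{C^2(\overline\Om)}$, $\|X\|_{L^\infty(\Om)}$, $\|H\|_{C^1(\overline\Om)}$, $\gamma_1$, and $\eps$, and then $\kappa$ large, these corrections are absorbed by $\eta_0$ and $w^+$ becomes a classical supersolution of \eqref{sigmaeqpmc2} in $N_\delta$. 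All parameters entering this choice are uniform in $\sigma\in[0,1]$ precisely because the Serrin gap $\eta_0$ is.

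Finally, I will use the height bound $\|u\|_{L^\infty(\Om)}\le\gamma_1$ to enlarge $\kappa$ further so that $w^+\ge\gamma_1\ge u$ on $\{d=\delta\}\cap\Om$, while by construction $w^+=\sigma\varphi=u$ on $\ptl\Om$. The comparison principle, valid because \eqref{sigmaeqpmc2} is uniformly elliptic on $C^2$ functions for $\eps\neq 0$, yields $u\le w^+$ in $N_\delta$; similarly $u\ge w^-$. Differentiating at $z_0$ along the inner unit normal gives
\begin{equation*}
|\ptl_\nu u(z_0)|\le\psi'(0)+\|D\varphi\|_{L^\infty(\ptl\Om)}=\tfrac{\kappa}{\mu}+\|D\varphi\|_{L^\infty(\ptl\Om)},
\end{equation*}
which is the desired uniform boundary gradient bound; \Cref{mainigeglobal} then concludes. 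The main obstacle will be the explicit verification that $w^+$ is a supersolution with constants uniform in $\sigma$, since the presence of the horizontal vector field $X$ and of the $\sigma$-dependence requires careful bookkeeping; however, because the Serrin gap $\eta_0$ provides exactly the slack needed, this reduces to a Heisenberg-flavoured adaptation of the computation already carried out in \cite[Section~6]{pmc1} and \cite{MR2332426}.
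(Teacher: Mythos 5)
Your proposal is correct and follows essentially the same route as the paper: the paper likewise invokes \Cref{mainigeglobal} to reduce the statement to a boundary gradient estimate, and then obtains the latter from the strict Serrin condition \eqref{Serrinstrict} via the barrier construction of \cite[Section 6, Proposition 4.8]{pmc1}, which is exactly the Serrin-type barrier $w^\pm=\sigma\varphi\pm\psi(d)$ you describe. The only difference is that the paper cites that construction verbatim while you sketch it explicitly, so no new idea is involved.
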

\begin{proof}
    In view of \Cref{mainigeglobal} we are left to provide boundary gradient estimates. Thanks to \eqref{Serrinstrict}, and following \cite[Section 6]{pmc1}, the latter follow \emph{verbatim} as in \cite[Proposition 4.8]{pmc1}.
\end{proof}

\section{Existence and regularity of $t$-graphs}\label{sec:mainsection}
\subsection{Existence of minimizers: the non-extremal case}\label{subsec:existence}

Throughout this subsection we fix a bounded domain $\Om\subseteq\rr^{2n}$ with Lipschitz boundary, and we consider $H\in L^\infty(\Om)$ such that \eqref{Giusti} holds
and
 \begin{equation}\tag{***}\label{subextremal}
     \Big|\int_\Om Hdz\Big|<P(\Om).
 \end{equation}
We stress that, even without imposing boundary conditions, \eqref{Giusti} is a necessary condition to the existence of a solution $u\in C^2(\Om)$ to \eqref{pmc'} in $\Om$. More precisely, arguing as in the Euclidean setting, the following result holds.
\begin{theorem}\label{necessaryfirsthp}
    Let $\Om\subseteq\rr^{2n} $ be a bounded domain with Lipschitz boundary, let $H\in\lip(\Om)$ and assume that there exists $u\in C^2(\Om)$ which solves \eqref{pmc'} in $\Om$. Then \eqref{Giusti} holds.
\end{theorem}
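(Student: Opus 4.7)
The plan is to introduce the vector field
\[
V:=\frac{Du+X}{\sqrt{\eps^2+|Du+X|^2}},
\]
which is of class $C^1(\Om;\rr^{2n})$, satisfies $\divv V=H$ on $\Om$ by \eqref{pmc'}, and, crucially, obeys the \emph{pointwise strict} bound
\[
|V(z)|^2=\frac{|Du+X|^2(z)}{\eps^2+|Du+X|^2(z)}<1\qquad\text{for every }z\in\Om
\]
(so in particular $\|V\|_{L^\infty(\Om)}\leq 1$). The role of the Riemannian relaxation parameter $\eps\neq 0$ is precisely to produce this strict inequality, which will be the engine of the argument; the goal is to test the identity $\divv V=H$ against $\chi_{\tilde\Om}$ and convert the pointwise bound $|V|<1$ into a strict integrated inequality.

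First I would record a connectedness observation: for any Caccioppoli set $\tilde\Om\subseteq\Om$ with $\tilde\Om\neq\emptyset$ and $\tilde\Om\neq\Om$, the interior perimeter $P(\tilde\Om;\Om)$ is strictly positive, for otherwise $\chi_{\tilde\Om}$ would be locally constant on the connected open set $\Om$ and hence trivial. Then, since $V\in L^\infty(\Om;\rr^{2n})$ and $\divv V=H\in L^\infty(\Om)$ (because $H\in\lip(\Om)$ and $\Om$ is bounded), I would invoke the Anzellotti pairing to obtain the Gauss--Green identity
\[
\int_{\tilde\Om}H\,dz+\int_\Om(V,D\chi_{\tilde\Om})=\int_{\partial\Om}\operatorname{tr}(\chi_{\tilde\Om})\,[V\cdot\nu_\Om]\,d\mathcal H^{2n-1},
\]
where $|[V\cdot\nu_\Om]|\leq\|V\|_{L^\infty(\Om)}\leq 1$ a.e.\ on $\partial\Om$, and, because $V$ is continuous in $\Om$, the pairing $(V,D\chi_{\tilde\Om})$ has density $V\cdot\nu_{\tilde\Om}$ with respect to $|D\chi_{\tilde\Om}|$ on $\partial^*\tilde\Om\cap\Om$.

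Combining this with the Lipschitz-boundary decomposition $P(\tilde\Om;\rr^{2n})=P(\tilde\Om;\Om)+\int_{\partial\Om}\operatorname{tr}(\chi_{\tilde\Om})\,d\mathcal H^{2n-1}$ yields
\[
\Big|\int_{\tilde\Om}H\,dz\Big|\leq\int_{\partial^*\tilde\Om\cap\Om}|V\cdot\nu_{\tilde\Om}|\,d\mathcal H^{2n-1}+\int_{\partial\Om}\operatorname{tr}(\chi_{\tilde\Om})\,d\mathcal H^{2n-1}.
\]
Since $1-|V|$ is continuous and strictly positive on $\Om$ and $P(\tilde\Om;\Om)>0$, by $\sigma$-additivity some compact subset of $\Om$ meets $\partial^*\tilde\Om$ in positive $\mathcal H^{2n-1}$-measure; on that compact set $1-|V\cdot\nu_{\tilde\Om}|$ is bounded below by a positive constant, which upgrades the first right-hand integral to be \emph{strictly} less than $P(\tilde\Om;\Om)$. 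Adding the boundary contribution gives $|\int_{\tilde\Om}H|<P(\tilde\Om)$, which is \eqref{Giusti}.

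The main obstacle I anticipate is the justification of the Gauss--Green formula when $\tilde\Om$ is not compactly contained in $\Om$ and $\partial^*\tilde\Om$ may touch $\partial\Om$: this is precisely where Anzellotti's pairing is indispensable, as it gives sense to the normal trace $[V\cdot\nu_\Om]$ despite $V$ admitting no classical trace on $\partial\Om$. If one wishes to avoid that machinery, an alternative is to exhaust $\Om$ via $\Om_j\Subset\Om$ and apply the classical divergence theorem to $\tilde\Om\cap\Om_j$ (where $\|V\|_{L^\infty(\Om_j)}<1$ strictly by compactness and continuity), but then passing to the limit in $j$ requires delicate bookkeeping of the perimeter contributions near $\partial\Om$, so the Anzellotti-based route is cleaner and yields the strict inequality directly.
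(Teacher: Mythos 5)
Your proof is correct and is precisely the argument the paper has in mind: \Cref{necessaryfirsthp} is stated without proof, with the remark that it follows ``arguing as in the Euclidean setting'' (i.e.\ as in Giusti's original necessity argument), and that argument is exactly what you carry out — testing $\divv V=H$ with $V=(Du+X)/\sqrt{\eps^2+|Du+X|^2}$ against $\chi_{\tilde\Om}$ via a Gauss--Green formula and upgrading $|V|<1$ to a strict integrated inequality using $\sup_K|V|<1$ on a compact set carrying positive perimeter. Your packaging via the Anzellotti pairing and your exhaustion alternative are both standard implementations of the same step, so there is nothing to flag beyond noting that the strict inequality is vacuous (indeed false) for Lebesgue-null $\tilde\Om$, which is the usual measure-theoretic reading of the hypothesis $\tilde\Om\neq\emptyset$, $\tilde\Om\neq\Om$ and is exactly what your connectedness observation handles.
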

For any $\varphi\in L^1(\partial\Om)$, we define the functional $\mathcal I_\varepsilon:BV(\Om)\to \rr$ by
 \begin{equation} \label{ieps}
\mathcal{I}_\eps(v)=P_\eps(E_v,\Om\times\rr)+\int_\Om Hu\,dz+\int_{\partial\Om}|v-\varphi|\,d\mathcal{H}^{2n-1},
 \end{equation}
where $E_v$ is defined as in \eqref{def:subgraph}. 
The following result follows as \cite[Lemma 1.1]{MR487722} (cf. also \cite[Lemma 3.2]{MR3767675}).
\begin{lemma}\label{lem:nonop}
 Assume that \eqref{Giusti} and \eqref{subextremal} hold. 
 Then there exists $\delta>0$ such that
 \begin{equation}\label{eq:nonop}
 \Big|\int_{\tilde \Om} H\,dz\Big|\leq (1-\delta)P(\tilde \Om)
 \end{equation}
 for every measurable set $\tilde \Om\subseteq\Om$.
\end{lemma}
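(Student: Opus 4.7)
My plan is to prove the lemma by a standard compactness-and-contradiction argument, in the spirit of Giusti's original Euclidean result that is cited in the statement. Suppose by contradiction there is no such $\delta>0$. Then there exists a sequence $(\tilde\Om_k)_k$ of measurable subsets of $\Om$ with
\[
\Big|\int_{\tilde\Om_k} H\,dz\Big|>\Big(1-\tfrac{1}{k}\Big)\,P(\tilde\Om_k).
\]
I may assume $P(\tilde\Om_k)<\infty$ (else the desired inequality is trivial) and $P(\tilde\Om_k)>0$. For $k\geq 2$,
\[
P(\tilde\Om_k)\leq 2\,\|H\|_{L^\infty(\Om)}|\Om|,
\]
so the perimeters are uniformly bounded. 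By BV-compactness, up to a subsequence $\chi_{\tilde\Om_k}\to\chi_E$ in $L^1(\Om)$ for some measurable $E\subseteq\Om$ with $P(E)\leq\liminf_k P(\tilde\Om_k)$.

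Using dominated convergence, $\int_{\tilde\Om_k}H\to\int_E H$, and combining with the contradiction hypothesis and the lower semicontinuity of the perimeter, I obtain
\[
\Big|\int_E H\,dz\Big|\geq\liminf_k\Big(1-\tfrac{1}{k}\Big)P(\tilde\Om_k)\geq P(E).
\]
I now split into three cases according to the Lebesgue measure of $E$. If $0<|E|<|\Om|$, then $E$ is a non-trivial proper measurable subset of $\Om$ and \eqref{Giusti} yields $|\int_E H\,dz|<P(E)$, a contradiction. If $|E|=|\Om|$, i.e.\ $E=\Om$ in measure, then \eqref{subextremal} gives $|\int_\Om H\,dz|<P(\Om)$, again contradicting the displayed inequality.

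The remaining case $|E|=0$ is the most delicate one, since there is no \emph{a priori} strict inequality to contradict. I handle it via the Euclidean isoperimetric inequality: since $|\tilde\Om_k|\to 0$, dominated convergence gives $\int_{\tilde\Om_k}H\to 0$, hence $P(\tilde\Om_k)\to 0$. On the other hand, the isoperimetric inequality in $\rr^{2n}$ provides a constant $c_{2n}>0$ with
\[
|\tilde\Om_k|^{(2n-1)/(2n)}\leq c_{2n}\,P(\tilde\Om_k),
\]
so that
\[
\Big(1-\tfrac{1}{k}\Big)P(\tilde\Om_k)<\|H\|_{L^\infty(\Om)}|\tilde\Om_k|\leq C\,\|H\|_{L^\infty(\Om)}\,P(\tilde\Om_k)^{2n/(2n-1)},
\]
which forces $P(\tilde\Om_k)\geq c>0$ uniformly for large $k$, contradicting $P(\tilde\Om_k)\to 0$. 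The main obstacle is indeed this degenerate case: \eqref{Giusti} and \eqref{subextremal} cover every non-trivial limiting set but say nothing about sequences collapsing to a null set, and it is precisely the isoperimetric inequality that rules out such a collapse while keeping the near-equality $|\int_{\tilde\Om_k}H|\approx P(\tilde\Om_k)$.
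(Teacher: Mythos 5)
Your argument is correct and is essentially the standard proof of the result the paper is citing: the paper gives no proof of \Cref{lem:nonop}, deferring to \cite[Lemma 1.1]{MR487722}, whose argument is precisely this contradiction scheme — uniform perimeter bounds, $BV$-compactness and lower semicontinuity to produce a limit set violating \eqref{Giusti} or \eqref{subextremal}, with the isoperimetric inequality ruling out collapse to a null set. You correctly identify the case $|E|=0$ as the only delicate point and handle it properly.
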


The proof of the following proposition reproduces the argument of \cite[Theorem 1.1]{MR336532}.

\begin{proposition}\label{existenceeps}
Let $H\in L^\infty(\Om)$ and assume that \eqref{Giusti} and \eqref{subextremal} hold. Then $\mathcal I_\varepsilon$
 has a minimum in $BV(\Om)$  for every $\varphi\in L^1(\partial\Om)$.  
\end{proposition}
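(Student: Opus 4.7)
The plan is to apply the classical direct method of the calculus of variations, closely following Giaquinta \cite{MR336532}. Fix $\varphi\in L^1(\partial\Om)$ and let $(u_k)_k\subseteq BV(\Om)$ be a minimizing sequence for $\mathcal I_\eps$; by replacing with $0\in BV(\Om)$ as a competitor we may assume $\mathcal I_\eps(u_k)\leq C$ uniformly. The proof decomposes into a uniform $BV$ bound, an extraction of a convergent subsequence, and a lower semicontinuity argument. The heart of the matter is the first step; once the $BV$ bound is in place the remaining two steps are essentially standard.

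\textbf{Uniform $BV$ bound.} First, by \Cref{reprlemma} together with the reverse triangle inequality,
\[
P_\eps(E_{u_k},\Om\times\rr)=(Du_k)_s(\Om)+\int_\Om\sqrt{\eps^2+|Du_k+X|^2}\,dz\geq |Du_k|(\Om)-\|X\|_{L^1(\Om)}.
\]
To absorb the sign-indefinite term $\int_\Om H u_k\,dz$, I would combine the layer-cake decomposition $u_k=u_k^+-u_k^-$ with the coarea formula: for any $v\in BV(\Om)$ one has $|Dv|(\Om)=\int_\rr P(\{v>t\};\Om)\,dt$ and $\int_{\partial\Om}|Tv|\,d\mathcal H^{2n-1}=\int_\rr\mathcal H^{2n-1}(\partial^*\{v>t\}\cap\partial\Om)\,dt$. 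Applying \Cref{lem:nonop} slice-by-slice, namely $|\int_A H\,dz|\leq(1-\delta)P(A;\rr^{2n})=(1-\delta)(P(A;\Om)+\mathcal H^{2n-1}(\partial^*A\cap\partial\Om))$ for every measurable $A\subseteq\Om$, one obtains
\[
\left|\int_\Om H u_k\,dz\right|\leq(1-\delta)\Big(|Du_k|(\Om)+\|u_k\|_{L^1(\partial\Om)}\Big).
\]
Combined with the trivial lower bound $\int_{\partial\Om}|u_k-\varphi|\,d\mathcal H^{2n-1}\geq\|u_k\|_{L^1(\partial\Om)}-\|\varphi\|_{L^1(\partial\Om)}$, this yields
\[
C\geq\mathcal I_\eps(u_k)\geq\delta\Big(|Du_k|(\Om)+\|u_k\|_{L^1(\partial\Om)}\Big)-C',
\]
and the $BV$ trace/Poincaré inequality upgrades this to a uniform bound $\|u_k\|_{BV(\Om)}\leq C''$.

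\textbf{Compactness and lower semicontinuity.} By the compact embedding $BV(\Om)\hookrightarrow L^1(\Om)$ I can extract a (non-relabeled) subsequence with $u_k\to u$ in $L^1(\Om)$ and $u\in BV(\Om)$; correspondingly $\chi_{E_{u_k}}\to\chi_{E_u}$ in $L^1_{\mathrm{loc}}(\Om\times\rr)$. The perimeter term is then lower semicontinuous by the standard $L^1$-lower semicontinuity of $P_\eps$ (equivalently, by \Cref{equifun} applied to the relaxation $\overline{\mathcal A}_\eps$). The bulk integral $\int_\Om Hu_k\to\int_\Om Hu$ by $H\in L^\infty$ and $L^1$ convergence. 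The boundary term needs a little care, since $BV$ traces are not continuous under mere $L^1$ convergence; the standard remedy, already present in \cite{MR336532}, is to extend each $u_k$ and $\varphi$ by a fixed $BV$ function across $\partial\Om$ to a slightly larger domain $\Om'\Supset\Om$, thereby incorporating $\int_{\partial\Om}|u_k-\varphi|\,d\mathcal H^{2n-1}$ into the perimeter of the extended subgraph in $\Om'\times\rr$, and then invoking lower semicontinuity on $\Om'$. Combining the three estimates gives $\mathcal I_\eps(u)\leq\liminf_k\mathcal I_\eps(u_k)=\inf\mathcal I_\eps$, so $u$ is a minimizer.

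\textbf{Main obstacle.} The only delicate step is the coercivity estimate in the first part: the strict non-extremality \eqref{subextremal}, reformulated via \Cref{lem:nonop} as the uniform gap $|\int_A H|\leq(1-\delta)P(A)$, is indispensable, as without this $\delta$ the coarea decomposition of $\int Hu_k$ could saturate the perimeter term and destroy the $BV$ bound. The boundary-term lower semicontinuity is a technical, but standard, issue resolved by the extension trick just described.
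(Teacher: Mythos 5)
Your proposal is correct and follows essentially the same route as the paper: coercivity via \Cref{lem:nonop} combined with the layer-cake and coarea formulas, the lower bound $P_\eps(E_v;\Om\times\rr)\geq Var(v,\Om)-\int_\Om|X|\,dz$, compactness in $BV$, and the extension to a larger domain to absorb the boundary term into a lower semicontinuous quantity (the paper sets up the extended functional $\mathcal J_\eps$ on a ball $B\supseteq\Om$ from the outset rather than only at the lower semicontinuity stage). The only cosmetic difference is that you derive the perimeter lower bound from \Cref{reprlemma} whereas the paper obtains it by $W^{1,1}$ approximation and lower semicontinuity; both yield the same estimate.
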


\begin{proof}
 Let $B\subseteq\rr^{2n}$ be a ball containing $\Om$ such that the Euclidean distance between $\partial \Om$ and $\partial B$ is positive, and extend $H$ to $B$ by letting $H\equiv 0$ outside $\Om$. Fix a function $\phi\in W_0^{1,1}(B)$ with trace $\varphi$ on $\partial\Om$. Then minimizing $\mathcal{I}_\eps$ is equivalent to minimize the functional
 \begin{equation}\label{def:functionalJ}
 \mathcal{J}_\eps(v)=P_\eps(E_v,\Om\times\rr)+Var(v,B\setminus\Om)+ \int_B H v\, dz
 \end{equation}
 in $K=\{v\in BV(B) : v=\phi \text{ in }B\setminus\Om \}$. Indeed, given $v_0$ and $v$ in $K$ with $v_0$ minimum of $\mathcal{J}_\eps$ and writing $u_0=v_0|_{\Om}$ and $u=v|_{\Om}$, it follows from \cite[Remark 2.13]{MR0775682} that
 \begin{equation*}
 \mathcal{I}_\eps(u)- \mathcal{I}_\eps(u_0)=\mathcal{J}_\eps(v)-\mathcal{J}_\eps(v_0)\geq 0.
 \end{equation*}
  For a given $v\in K$, we define $v^+,v^-\in BV(B)$ by
  
    \begin{equation*}
v^+(z)=
\displaystyle{\begin{cases}
\max\{0,v(z)\}&\text{ if }z\in\Om,\\
0&\text{otherwise}
\end{cases}
}\qquad\text{and}\qquad v^-(z)=
\displaystyle{\begin{cases}
\max\{0,-v(z)\}&\text{ if }z\in\Om,\\
0&\text{otherwise}
\end{cases}
}.
\end{equation*}
Notice that
\begin{equation}\label{etinom}
   E_t:= \{z\in B\,:\,v^+(z)>t\}=\{z\in\Om\,:\,v(z)>t\}\subseteq\Om 
\end{equation}
and
\begin{equation}\label{ftinom}
   F_t:= \{z\in B\,:\,v^-(z)>t\}=\{z\in\Om\,:\,v(z)<-t\}\subseteq\Om
\end{equation}
for any $t>0$. 
Hence, thanks to \Cref{lem:nonop}, \eqref{etinom}, \eqref{ftinom}, the \emph{layer-cake formula} (cf. \cite[Remark 13.6]{MR2976521}) and the \emph{Coarea formula} (cf. \cite[Theorem 13.1]{MR2976521}), it follows that 
 \begin{equation*}
 \begin{split}
\int_B H(z)v(z)\,dz&=\int_\Om H(z)v^+(z)\,dz-\int_\Om H(z)v^-(z)\,dz\\
&=\int_0^{+\infty}\,dt\int_{\Om}\chi_{E_t}H(z)\,dz-\int_{0}^{+\infty}\,dt\int_{\Om}\chi_{F_t}H(z)\,dz\\
&\geq-(1-\delta)\int_0^{+\infty}P(E_t)\,dt-(1-\delta)\int_{0}^{+\infty}P(F_t)\,dt\\
&=-(1-\delta)\int_0^{+\infty}P(E_t,B)\,dt-(1-\delta)\int_{0}^{+\infty}P(F_t,B)\,dt\\
&\geq-(1-\delta)Var(v^+,B)-(1-\delta)Var(v^-,B).
 \end{split}
 \end{equation*}
 Therefore, owing again to \cite[Remark 2.13]{MR0775682},
 \begin{equation}\label{in:intH}
     \begin{split}
     \int_B& H(z)v(z)dz\\
     &\geq -(1-\delta)(Var(v^+,\Om)+Var(v^-,\Om))-(1-\delta)\left(\int_{\partial\Om}|v^+|\,d\mathcal{H}^{2n-1}+\int_{\partial\Om}|v^-|\,d\mathcal{H}^{2n-1}\right)\\
    &= -(1-\delta)Var(v,\Om)-(1-\delta)\int_{\partial\Om}|v|\,d\mathcal{H}^{2n-1}\\
    &\geq-(1-\delta)Var(v,\Om)-(1-\delta)\int_{\partial\Om}|v-\varphi|\,d\mathcal{H}^{2n-1}-(1-\delta)\int_{\partial\Om}|\varphi|\,d\mathcal{H}^{2n-1}\\
    &=-(1-\delta)Var(v,\overline\Om)-(1-\delta)\int_{\partial\Om}|\varphi|\,d\mathcal{H}^{2n-1}\\
    &\geq-(1-\delta)Var(v,B)-\int_{\partial\Om}|\varphi|\,d\mathcal{H}^{2n-1}\\
     \end{split}
 \end{equation}
 On the other hand, it holds that 
 \begin{equation}\label{in:pergraph}
     P_\eps(E_v;\Om\times\rr)\geq Var(v,\Om)-\int_\Om|X|\,dz.
 \end{equation}
 Indeed, let $(v_{k})_{k}\subseteq W^{1,1}(\Om)$ be such that $v_{k}\to v$ in $L^1(\Om)$. Then, for any fixed ${k}\in\mathbb N$, \eqref{periforw11} implies that
 \begin{equation*}
     \begin{split}
         P_\eps(E_{v_{k}};\Om\times\rr)&=\int_\Om\sqrt{\eps^2+|D v_{k}+X|^2}\,dz\geq\int_\Om|Dv_{k}+X|\,dz\geq\int_\Om|D v_{k}|\,dz-\int_\Om|X|\,dz
     \end{split}
 \end{equation*}
 Hence, by the lower semicontinuity of $Var(\cdot,\Om)$ with respect to the $L^1$-convergence, we conclude that
 \begin{equation*}
     \int_\Om|D v|\,dz-\int_\Om|X|\,dz\leq\liminf_{k\to+\infty}\left(\int_\Om|D v_{k}|\,dz-\int_\Om|X|\,dz,\right)\leq \liminf_{k\to+\infty}P_\eps(E_{v_{k}};\Om\times\rr),
 \end{equation*}
 from which \eqref{in:pergraph} follows by \Cref{equifun}.
Substituting \eqref{in:intH} and \eqref{in:pergraph} in \eqref{def:functionalJ}, we finally obtain
 \begin{equation}\label{eq:BVbound}
 \mathcal{J}_\eps(v)\geq \delta Var(v,B)-\int_\Om|X|\,dz-\int_{\partial\Om}|\varphi|\,d\mathcal{H}^{2n-1}
 \end{equation}
Let $\{v_k\}\subseteq K$ be a minimizing sequence for $\mathcal J_\eps$. By \eqref{eq:BVbound} and the Poincaré's inequality, $\{v_k\}$ is bounded in $BV(B)$, so that, up to a subsequence, there exists $v_0\in BV(B)$ such that
$v_k\to v_0$ in $L^1(B)$.  Noticing that $K$ is a closed with respect to the $L^1$ convergence on $B$, then $v_0\in K$. By the lower semicontinuity of the perimeter with respect to $L^1$ convergence, we get that $v_0$ is a minimizer of $\mathcal{J}_\eps$ and hence $\mathcal{I}_\eps$ has a minimizer in $BV(\Om)$.
\end{proof}

\subsection{Variational properties of minimizers}\label{densityestimates}
Throughout this subsection, we fix a bounded domain $\Om\subseteq\rr^{2n}$ and $H\in L^\infty_{loc}(\Om)$.
\begin{definition}\label{hmindeff}
 We say that a Caccioppoli set $E\subseteq\Om\times\rr$ is a (local) \emph{$H$-minimizer} in $\Om\times\rr$ if 
     \begin{equation}\label{epsminmaggio}
        P_\varepsilon (E,A)+\int_{E\cap A}Hdx\leq P_\eps(F,A)+\int_{F\cap A}Hdx
    \end{equation}
    for any open  set $A\Subset\Om\times\rr$ and any measurable set $F$ such that $E\Delta F\Subset A$, where $H(z,t)=H(z)$.
\end{definition}

In the following theorem we prove that subgraphs of minimizers of $\mathcal{I_\varepsilon}$ are $H$-minimizers (cf. \cite{MR0816345} for an exhaustive account of this and related results in the Euclidean setting). Notice that, if $u\in BV(\Om)$ is a minimizer for $\mathcal I_\varepsilon$, then
\begin{equation}\label{limminloc2esp}
\int_{\tilde\Om}\sqrt{\eps^2+|Du+X|^2}+\int_{\tilde\Om}Hu\,dz\leq \int_{\tilde\Om}\sqrt{\eps^2+|Dv+X|^2}+\int_{\tilde\Om}Hv\,dz
    \end{equation}
for any open set $\tilde\Om\Subset\Om$ and any $v\in BV_{loc}(\Om)$ such that $\{u\neq v\}\Subset\tilde\Om$. When $u\in BV_{loc}(\Om)$ satisfies \eqref{limminloc2esp}, we refer to it as \emph{(local) $H$-minimizer}. The ambiguity with \Cref{hmindeff} is motivated by the following result.

\begin{theorem}
\label{minimplieslambdamin}
    Let $u\in BV_{loc}(\Om)$. Then $u$ satisfies \eqref{limminloc2esp} if, and only if, $E_u$ is an $H$-minimizer in $\Om\times\rr$.
\end{theorem}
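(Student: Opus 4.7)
My plan is to show that \eqref{limminloc2esp} and the $H$-minimality condition \eqref{epsminmaggio} are the same variational principle in disguise, using as a bridge the representation formula of \Cref{reprlemma}. For $u\in BV_{loc}(\Om)$ and $\tilde\Om\Subset\Om$, this formula gives
\[
P_\eps(E_u, \tilde\Om\times\rr) \;=\; \int_{\tilde\Om}\sqrt{\eps^2 + |Du+X|^2}\,dz \;+\; (Du)_s(\tilde\Om),
\]
identifying the relaxed area functional with the $\eps$-perimeter of the subgraph. Read in this relaxed sense, \eqref{limminloc2esp} becomes
\[
P_\eps(E_u, \tilde\Om\times\rr) + \int_{\tilde\Om} Hu\,dz \;\leq\; P_\eps(E_v, \tilde\Om\times\rr) + \int_{\tilde\Om} Hv\,dz \qquad (\star),
\]
so the proof reduces to chaining $(\star)$ with \eqref{epsminmaggio} through an appropriate test competitor in each direction.

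To deduce $(\star)$ from $H$-minimality, I would fix $v$ with $K := \{u\neq v\}\Subset\tilde\Om$, pick an open set $U$ with $K\Subset U\Subset\tilde\Om$ whose boundary is negligible for $P_\eps(E_u,\cdot)$ and $P_\eps(E_v,\cdot)$, and for each $T > 0$ form the cutoff competitor
\[
F_T := (E_{v_T}\cap A_T)\cup (E_u\setminus A_T), \qquad A_T := U\times(-T,T),
\]
where $v_T := (v\wedge T)\vee(-T)$ is the two-sided truncation of $v$; this ensures $F_T\Delta E_u\Subset A_T\Subset\Om\times\rr$. Plugging $F_T$ into \eqref{epsminmaggio} and applying Fubini (thanks to $H(z,t)=H(z)$), I would turn the volume integrals into horizontal integrals of truncated functions plus identical $T\int_U H\,dz$ terms that cancel between the two sides. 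Letting $T\to\infty$, dominated convergence, inner regularity of the perimeter measure, and the identity $P_\eps(F_T, A_T) = P_\eps(E_{v_T}, A_T)$ would produce $(\star)$ on $U$; the equality $u=v$ off $U$ combined with additivity of $P_\eps(E_\cdot,\cdot)$ on the decomposition $\tilde\Om = U\cup(\tilde\Om\setminus\overline{U})$ would then extend the inequality to $\tilde\Om$.

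For the converse, given an open $A\Subset\Om\times\rr$ and a Caccioppoli set $F$ with $E_u\Delta F\Subset A$, I would pick an open $\tilde\Om$ with $\pi(A)\Subset\tilde\Om\Subset\Om$ and introduce the \emph{vertical rearrangement}
\[
v(z) := \int_\rr\big(\chi_F(z,t) - \chi_{\{t<0\}}(z,t)\big)\,dt,\qquad z\in\tilde\Om,
\]
well-defined because $F\Delta E_u$ is bounded, and satisfying $v=u$ off $\pi(F\Delta E_u)\Subset\tilde\Om$. The classical Miranda--Giusti argument~\cite{MR174706,MR0816345}, adapted to the $\eps$-perimeter via the duality characterization of \Cref{equifun}, should then yield the two pillars
\[
P_\eps(E_v,\tilde\Om\times\rr) \;\leq\; P_\eps(F,\tilde\Om\times\rr), \qquad \int_{E_v\cap A} H\,dx = \int_{F\cap A} H\,dx,
\]
the first being a rearrangement inequality, the second a Fubini identity exploiting the $z$-only dependence of $H$. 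Feeding this $v$ into $(\star)$ and chaining the two displays would deliver \eqref{epsminmaggio}. The principal obstacle I anticipate is the rearrangement inequality above: its validity rests on the \emph{convexity} of the anisotropic integrand $p\mapsto\sqrt{\eps^2+|p+X(z)|^2}$ in $p$, which is the structural feature driving the classical Euclidean argument, so the adaptation to the Riemannian Heisenberg setting should require only cosmetic modifications.
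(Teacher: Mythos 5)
Your proposal follows essentially the same route as the paper: the substantive direction (functional minimality of $u$ implies set minimality of $E_u$) is handled by exactly the same vertical rearrangement $w(z)=\lim_k\bigl(\int_{-k}^{k}\chi_F\,dt-k\bigr)$, the rearrangement inequality $P_\eps(E_w,\tilde\Om\times\rr)\le P_\eps(F,\tilde\Om\times\rr)$ obtained through the duality formula of \Cref{equifun} à la Miranda--Giusti, and a Fubini identity for the $H$-term using that $H$ depends only on $z$, while the converse is the truncation argument the paper regards as straightforward. The one slip is in that truncation step: $F_T\Delta E_u$ is \emph{not} compactly contained in $A_T=U\times(-T,T)$ (it can accumulate at the lids $U\times\{\pm T\}$ and along $\partial U\times(-T,T)$ wherever $v_T\neq u$), and the claimed identity $P_\eps(F_T,A_T)=P_\eps(E_{v_T},A_T)$ ignores the gluing perimeter created there; both points are repaired by the standard enlargement/cut-off bookkeeping of \cite[Theorem 14.8]{MR0775682}, which is also what the paper implicitly invokes.
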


\begin{proof}
Assume first that $u\in BV_{loc}(\Om)$ satisfies \eqref{limminloc2esp}. Let $A\Subset\Om\times\rr$ be open, and let $\tilde\Om\Subset\Om$ be an open set such that $A\Subset\tilde\Om\times\rr$. let now $F$ be a measurable set such that $F\Delta E_u\Subset A$. We can assume without loss of generality that $F$ has finite perimeter in $A$. Since $E_u$ is a subgraph, we infer that
\begin{equation*}\label{pseudograph}
\lim_{t\to+\infty}\chi_F(z,t)=0\qquad\text{and}\qquad\lim_{t\to-\infty}\chi_F(z,t)=1
    \end{equation*}
    for a.e. $z\in\Om$.
Inspired by \cite[Lemma 14.7]{MR0775682}, we set $w\in BV_{loc}(\Om)$ the function
    \begin{equation*}\label{def:w}
        w(z)=\lim_{k\to\infty}w_k(z)=\lim_{k\to\infty}\left(\int_{-k}^k\chi_F(z,t)\,dt-k\right).
    \end{equation*}
We claim that
\begin{equation}\label{pseudograph2}
        P_\eps(E_w;\tilde\Om\times \rr)\leq P_\eps(F,\tilde\Om\times\rr)
    \end{equation}
    and
    \begin{equation}\label{samearea}
        \int_{\Om'\times\rr}\left(\chi_{E_w}-\chi_F\right)\,dz\,dt=0
    \end{equation}
for any open set $\Om'\Subset \tilde\Om$. Indeed, assume first that $\exists L>0$ such that 
\begin{equation}\label{hpaux}
        \Om\times(-\infty,-L)\subseteq F\subseteq\Om\times(-\infty,L).
    \end{equation}
     It is clear that $-L\leq w\leq L$ and $w(z)=-L+\int_{-L}^L\chi_F(z,t)\,dt$.  Let $\eta:\rr\to[0,1]$ be a smooth cut-off function between $[-L,L]$ and $[-L-1,L+1]$. From a direct computation, we get that for a.e. $z\in\Om$, it holds
    \begin{equation}\label{aux2}
        \int_\rr\chi_F(z,t)\eta(t)\,dt=w(z)+\alpha\qquad\text{and}\qquad\int_\rr\chi_F(z,t)\eta'(t)\,dt=1,
    \end{equation}
    where $\alpha=L+\int_{-L-1}^L\eta(t)\,dt$. Let $g=(\tilde g,g_{2n+1})\in C_c^1(\Om,\rr^{2n+1})$ be such that $|g|\leq 1$. Let us set 
    \begin{equation*}
        W|_{(z,t)}=\sum_{j=1}^{2n}\eta(t)g_j(z)Z_j|_{(z,t)}.
    \end{equation*}  Then $W\in C_c^1(\Om\times\rr,T\hh^n)$ and $|W|_\varepsilon\leq 1$. Therefore \eqref{aux2} implies that
    \begin{equation*}
        \begin{split}
            P_\varepsilon (F,\Om\times\rr)\geq&\int_{\Om\times\rr}\chi_F(z,t)\divv W(z,t)\,dt\,dz\\
            =&\int_{\Om}\int_\rr\chi_F(z,t)\left(\eta(t)\divv\tilde g(z)-\eta'(t)\langle\tilde g(z),X(z)\rangle+\eta'(t)\varepsilon g_{2n+1}(z)\right)\,dt\,dz\\
            =&\int_\Om  \Big( \divv\tilde g(z) \int_\rr\chi_F(z,t)\eta(t)\,dt-\langle\tilde g(z),X(z)\rangle\int_\rr\chi_F(z,t)\eta'(t)\,dt\\
            &+\varepsilon g_{2n+1}\int_\rr\chi_F(z,t)\eta'(t)\,dt\Big)\,dz\\
            =&\int_\Om (w(z)+\alpha)\divv\tilde g(z)-\langle\tilde g(z),X(z)\rangle+\varepsilon g_{2n+1}(z)\,dz\\
            =&\int_\Om w(z)\divv\tilde g(z)-\langle\tilde g(z),X(z)\rangle+\varepsilon g_{2n+1}(z)\,dz,
        \end{split}
    \end{equation*}
    where we used the fact that $\text{supp}(\tilde g)\subseteq \Om$. Hence, assuming \eqref{hpaux}, \eqref{pseudograph2} holds by \Cref{equifun}. Moreover, 
    \begin{equation}\label{julianfine}
              \begin{split}
        \int_{\tilde\Om}H(u-w)dz=&\int_{\tilde\Om\cap\{u\geq w\}}H(u-w)dz-\int_{\tilde\Om\cap\{u<w\}}H(w-u)dz\\
        =&\int_{\tilde\Om\cap\{u\geq w\}}H(|E_u(z)\cap A(z)|-|E_w(z)\cap A(z)|)dz\\
        &-\int_{\tilde\Om\cap\{u<w\}}H(|E_w(z)\cap A(z)|-|E_u(z)\cap A(z)|)dz\\
        =&\int_{(\tilde\Om\cap\{u\geq w\})\times\rr}H(\chi_{E_u\cap A}-\chi_{E_w\cap A})dx-\int_{\tilde\Om\cap\{u<w\}}H(\chi_{E_w\cap A}-\chi_{E_u\cap A})dx\\
        =&\int_{\tilde\Om\times \rr}H(\chi_{E_u\cap A}-\chi_{E_w\cap A})dx,
   \end{split}
    \end{equation}
where $E_v(z)=\{t\in\rr : v(z)>t \}$ and $A(z)=\{t\in\rr : (z,t)\in A \}$.    
To drop \eqref{hpaux}, one can argue exactly as in the proof of \cite[Theorem 14.8]{MR0775682}.
Finally, \eqref{samearea} follows \emph{verbatim} as in the proof of \cite[Teorema 2.3]{MR174706}.
    In view of \eqref{pseudograph2}, \eqref{samearea} and \eqref{julianfine}, arguing as in \cite[Theorem 14.9]{MR0775682} and \cite{MR0500423} $E_u$ is an $H$-minimizer in $\Om\times\rr$. Being the converse implication fairly straightforward, the thesis follows.
\end{proof}

Clearly, since $H\in L^\infty_{loc}(\Om)$, $H$-minimizer sets are \emph{almost minimizers} in the sense of \cite{MR0420406} for suitable \emph{anisotropic energies}. More precisely, if $E$ is an $H$-minimizer in $\Om\times\rr$, $\tilde\Om\Subset\Om$ and $\|H\|_{L^\infty(\tilde\Om)}=H_0$, then
\begin{equation}\label{quasiminimizerdef}
   P_\varepsilon (E,A)\leq P_\varepsilon (F,A)+H_0|E\Delta F|
\end{equation}
for any open set $A\Subset\tilde\Om\times\rr$ and any measurable set $F\subseteq \Om\times\rr$ such that $E\Delta F\Subset A$. For almost minimizers in the previous sense, arguing as in the Euclidean setting (cf. \cite{MR2976521}) it is possible to derive \emph{uniform density estimates} for both volume and perimeter. In view of \eqref{ineq:perieq}, the following well-known estimates follow from for instance from \cite[Proposition 4.5]{MR4430590}. 



\begin{proposition}\label{prop:voldensity}
    For any domain  $\tilde\Om\Subset\Om$, there exist $c_0=c_0(\eps)>0$, $c_1=c_1(\eps)>0$, $c_2>0$ independent of $\eps\in(0,1]$ and $r_0>0$ such that for any $H$-minimizer $E$, any $p\in \overline{\ptl^* E}\cap(\tilde\Om\times\rr)$ and any $r<r_0$, it holds
    \begin{equation}\label{in:voldensity}
   c_0 r^{2n+1}\leq \min\{|E\cap B(p,r)|,|E\setminus B(p,r)|\}
   \end{equation}
   and
   \begin{equation}\label{ineq:perden}
c_1 r^{2n}\leq P_\varepsilon(E,B(p,r))\leq c_2 r^{2n},
    \end{equation}
    where $B(p,r)$ is the Euclidean ball centered at $p$ of radius $r$.
\end{proposition}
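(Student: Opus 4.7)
The plan is to recognize that $H$-minimizers fall into the well-studied class of $\Lambda$-almost minimizers for the anisotropic perimeter $P_\eps$, and then invoke the by-now-standard density estimates for such objects, carefully tracking the $\eps$-dependence through the bi-Lipschitz equivalence \eqref{ineq:perieq} between $P_\eps$ and the Euclidean perimeter.

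First, I would observe that the statement \eqref{quasiminimizerdef}, already derived in the paragraph preceding the proposition, precisely says that $E$ is a $(\Lambda, r_0)$-almost minimizer for $P_\eps$ in $\tilde\Om\times\rr$ with $\Lambda=H_0=\|H\|_{L^\infty(\tilde\Om)}$, for $r_0$ smaller than the Euclidean distance of $\tilde\Om\times\rr$ to the complement of a fixed open neighborhood inside $\Om\times\rr$. Thanks to \eqref{eq:normequiv} and \eqref{ineq:perieq}, the functional $P_\eps$ is a continuous anisotropic Euclidean perimeter on $\tilde\Om\times\rr$ with integrand uniformly elliptic (with constants depending on $\eps$). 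This places $E$ exactly in the setting of \cite[Proposition 4.5]{MR4430590}, whose direct application yields \eqref{in:voldensity} and \eqref{ineq:perden}.

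Next, to see the $\eps$-dependence of the constants matches the one in the statement, I would review the two halves of the classical De Giorgi argument. For the perimeter upper bound, comparison of $E$ with $E\setminus B(p,r)$ gives
\[
P_\eps(E,B(p,r))\leq P_\eps(\partial B(p,r))+H_0|B(p,r)|,
\]
and the right-hand side is bounded by $c_2 r^{2n}$ through the upper bound in \eqref{ineq:perieq}; as noted after \eqref{eq:normequiv}, the relevant constant $c$ is uniform for $\eps\in(0,1]$, hence $c_2$ can be chosen independent of $\eps$. For the volume lower bound, one sets $m(r)=|E\cap B(p,r)|$ and uses the Euclidean isoperimetric inequality together with the coarea-type identity $m'(r)=\mathcal H^{2n}(E\cap\partial B(p,r))$ to derive the ODE inequality
\[
m(r)^{\frac{2n}{2n+1}}\leq C\bigl(m'(r)+H_0 m(r)\bigr),
\]
where $C$ depends on the lower constant in \eqref{ineq:perieq}, hence on $\eps$. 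For $r\leq r_0$ small enough (with $r_0$ absorbing the $H_0$ term), integration yields \eqref{in:voldensity} with $c_0=c_0(\eps)$. The perimeter lower bound $c_1 r^{2n}\leq P_\eps(E,B(p,r))$ then follows from \eqref{in:voldensity} applied to both $E$ and $\rr^{2n+1}\setminus E$ combined with the relative isoperimetric inequality on Euclidean balls, which again introduces $\eps$-dependence through \eqref{ineq:perieq}.

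The main obstacle is not conceptual, since the density estimates themselves are classical, but rather the verification that the reference result indeed covers our anisotropic setting and that the $\eps$-dependence of the constants comes out as stated. In particular, one should check that the almost-minimality condition \eqref{quasiminimizerdef} is the one used in \cite{MR4430590}, and that the upper perimeter bound is obtained by a comparison argument that only loses the $\eps$-independent upper constant in \eqref{ineq:perieq}, so that $c_2$ is genuinely $\eps$-uniform while $c_0$ and $c_1$ inherit the $\eps$-dependent lower constant from \eqref{ineq:perieq}.
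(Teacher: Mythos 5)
Your proposal follows essentially the same route as the paper: the paper likewise observes that \eqref{quasiminimizerdef} places $H$-minimizers in the class of almost minimizers for the anisotropic (uniformly elliptic, by \eqref{ineq:perieq}) energy $P_\eps$ and then simply cites \cite[Proposition 4.5]{MR4430590} for the density estimates. Your additional sketch of the De Giorgi comparison/ODE argument and the tracking of which constants inherit the $\eps$-dependence from \eqref{ineq:perieq} is consistent with the statement and goes beyond what the paper writes out.
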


The local boundedness of minimizers is a consequence of \Cref{minimplieslambdamin} and \Cref{prop:voldensity}, and it follows as its Euclidean counterpart (cf. \cite[Theorem 14.10]{MR0775682}).

\begin{proposition}\label{linftyloc}
    Let  $u\in BV(\Om)$ be a minimizer of $\mathcal I_\varepsilon$. Then $u\in L^\infty_{loc}(\Om)$.
\end{proposition}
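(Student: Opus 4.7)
The plan is to follow the Euclidean argument of \cite[Theorem 14.10]{MR0775682}, combining the fact that subgraphs of local minimizers of $\mathcal I_\varepsilon$ are $H$-minimizers (\Cref{minimplieslambdamin}) with the uniform perimeter density estimate of \Cref{prop:voldensity}. The strategy is to argue by contradiction: if $u$ is essentially unbounded on some relatively compact subset of $\Om$, then $\overline{\ptl^* E_u}$ must contain a sequence of points whose vertical coordinate tends to $+\infty$ (or $-\infty$), and summing the perimeter density estimate along disjoint balls around these points forces $P_\eps(E_u,\tilde\Om\times\rr)$ to be infinite, contradicting the finiteness guaranteed by \Cref{reprlemma}.

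More precisely, suppose for contradiction that there exist connected open sets $\Om'\Subset\tilde\Om\Subset\Om$ such that $u$ is essentially unbounded above on $\Om'$, the case of unboundedness from below being symmetric. Since $E_u$ is an $H$-minimizer by \Cref{minimplieslambdamin}, \Cref{prop:voldensity} applied with $\tilde\Om$ furnishes constants $c_1,r_0>0$, and after shrinking $r_0$ I may additionally assume $r_0<d(\Om',\ptl\tilde\Om)$. Because $u\in L^1(\Om)$, the essential unboundedness of $u$ above on $\Om'$ implies that for every $M>0$ both $E_u$ and its complement have positive Lebesgue measure in the connected open slab $\Om'\times(M,+\infty)$; a standard fact about sets of finite perimeter then yields $\overline{\ptl^* E_u}\cap(\Om'\times(M,+\infty))\neq\emptyset$, since otherwise $\chi_{E_u}$ would have to be essentially constant on that slab.

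Iterating this observation, I inductively select a sequence $p_k=(z_k,t_k)\in\overline{\ptl^* E_u}\cap(\Om'\times\rr)$ satisfying $t_{k+1}>t_k+2r_0$ for every $k$. By the choice of $r_0$, the Euclidean balls $B(p_k,r_0)$ are then pairwise disjoint and entirely contained in $\tilde\Om\times\rr$. Applying the lower bound in \eqref{ineq:perden} at each $p_k$ and summing, I obtain
\begin{equation*}
    P_\eps(E_u,\tilde\Om\times\rr)\geq\sum_{k=1}^\infty P_\eps(E_u,B(p_k,r_0))\geq\sum_{k=1}^\infty c_1 r_0^{2n}=+\infty,
\end{equation*}
contradicting the finiteness of $P_\eps(E_u,\tilde\Om\times\rr)$ granted by $u\in BV(\Om)$ together with \Cref{reprlemma}. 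The symmetric argument rules out essential unboundedness from below, and so $u\in L^\infty_{loc}(\Om)$.

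The most delicate point is extracting the boundary points $p_k$ from the bare essential unboundedness of $u$. This amounts to combining Federer's characterization of the essential boundary of a set of finite perimeter with the standard identification, for a $BV$ subgraph, of such boundary with a suitable graph of the precise representative of $u$. Thanks to the equivalence \eqref{ineq:perieq}, the reduced and essential boundaries of $E_u$ with respect to $P_\eps$ coincide with the Euclidean ones, so the classical measure-theoretic arguments transfer verbatim to the present setting.
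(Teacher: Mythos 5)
Your proof is correct and relies on exactly the two ingredients the paper cites (\Cref{minimplieslambdamin} and \Cref{prop:voldensity}); the difference lies in which density estimate drives the contradiction. The Euclidean argument the paper points to (Giusti, Theorem 14.10) uses the \emph{volume} lower bound \eqref{in:voldensity}: at a point $p=(z_1,t_1)\in\overline{\partial^*E_u}$ with $t_1$ large one bounds $|E_u\cap B(p,\rho)|$ from above by $\int_{B(z_1,\rho)}(u-t_1+\rho)^+\,dz$, which vanishes as $t_1\to\infty$ because $u\in L^1$, contradicting $c_0\rho^{2n+1}$; this directly caps the height that $\overline{\partial^*E_u}$ can reach over a fixed ball. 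You instead use the \emph{perimeter} lower bound in \eqref{ineq:perden} and stack infinitely many pairwise disjoint balls centred at boundary points of unboundedly increasing height, contradicting the finiteness of $P_\eps(E_u,\tilde\Om\times\rr)$ guaranteed by $u\in BV(\Om)$ and \Cref{reprlemma}. Both routes are sound; yours replaces the $L^1$ tail estimate by the (correctly justified) observation that essential unboundedness together with connectedness of the slab $\Om'\times(M,+\infty)$ forces $\partial^*E_u$ to meet every such slab, at the price of needing infinitely many boundary points rather than a single one at large height. Two cosmetic remarks: \eqref{ineq:perden} is stated for radii $r<r_0$, so run the sum with $r=r_0/2$ (and spacing $t_{k+1}>t_k+r_0$); and the reduction to connected $\Om'$ should be recorded explicitly, e.g.\ by covering an arbitrary $\Om'\Subset\Om$ with finitely many balls compactly contained in $\Om$.
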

\subsection{Higher regularity of Lipschitz continuous t-graphs}
In the rest of this section, inspired by \cite{MR0775682}, we show classical regularity for minimizers of $\mathcal{I_\varepsilon}$. 
The main difficulty consists in obtaining Lipschitz regularity since, as soon as a minimizer is Lipschitz, standard regularity results for uniformly elliptic equations apply. More precisely, the following regularity property for Lipschitz weak solutions to \eqref{pmc'} holds.

\begin{theorem}\label{uisc2}
    Let $H\in\lip_{loc}(\Om)$ and $u\in \lip_{loc}(\Om)$ be a weak solution on $\Om$ to \eqref{pmc'}.
     Then $u\in C^{2,\alpha}_{loc}(\Om)$ for any $\alpha\in (0,1)$ and is a classical solution to \eqref{pmc'}.
    Moreover, if  
    $H\in C^{k,\gamma}_{loc}(\Om)$ for some $k\geq 1$ and $\gamma\in(0,1)$, then $u\in C^{k+2,\gamma}_{loc}(\Om)$. Finally, if $H\in C^\infty(\Om)$, then $u\in C^\infty(\Om)$.
\end{theorem}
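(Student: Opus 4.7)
The plan is to adapt the classical difference-quotient argument, treating the Heisenberg $t$-graph PDE as a quasilinear equation whose principal part is uniformly elliptic on Lipschitz functions. Writing $a^i(z,p)=(p_i+X_i(z))/\sqrt{\eps^2+|p+X(z)|^2}$, the equation reads $\sum_i D_i a^i(z,Du)=H$. Since $u\in\lip_{loc}(\Om)$, on any fixed $\hat\Om\Subset\Om$ the matrix $\partial_{p_j}a^i(z,Du(z))$ is uniformly elliptic with bounds depending only on $\|Du\|_{L^\infty(\hat\Om)}$ and $\|X\|_{L^\infty(\hat\Om)}$, and the smoothness of $X$ guarantees Lipschitz dependence of $a^i$ on $z$ uniformly on bounded sets of $p$.

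The key step is to derive, for each small $v\in\rr^{2n}$, a linear equation for the difference quotient $u_v(z)=(u(z+v)-u(z))/|v|$. Subtracting the equations satisfied by $u(z)$ and $u(z+v)$ and using the fundamental theorem of calculus in the $p$-variable,
\begin{equation*}
\sum_{i,j} D_i\bigl(\tilde a^{ij}_v(z)\, D_j u_v(z)\bigr) \;=\; g_v(z) \;-\; \sum_i D_i f^i_v(z),
\end{equation*}
where $\tilde a^{ij}_v(z)=\int_0^1 \partial_{p_j}a^i\bigl(z+v,Du(z)+s(Du(z+v)-Du(z))\bigr)\,ds$, $g_v(z)=(H(z+v)-H(z))/|v|$, and $f^i_v(z)=(a^i(z+v,Du(z))-a^i(z,Du(z)))/|v|$. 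The matrix $\tilde a^{ij}_v$ is uniformly elliptic and bounded measurable, with constants independent of $v$, while the Lipschitz regularity of $H$ and the smoothness of $X$ yield $L^\infty$-bounds on $g_v$ and $f^i_v$ uniform in $v$. The De Giorgi--Nash--Moser theorem for linear divergence-form equations then gives a uniform $C^{0,\alpha}_{loc}$ estimate for $u_v$; letting $v\to 0$ along each coordinate direction shows $Du\in C^{0,\alpha}_{loc}$, i.e.\ $u\in C^{1,\alpha}_{loc}(\Om)$ for every $\alpha\in(0,1)$.

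Next, rewriting the PDE in non-divergence form $\sum_{i,j}a^{ij}(z,Du)\,D_iD_j u = F(z,Du)$, with $a^{ij}=\partial_{p_j}a^i$ and $F$ collecting $H$ and the terms arising from differentiating $X$, the coefficients $z\mapsto a^{ij}(z,Du(z))$ are in $C^{0,\alpha}_{loc}$ and uniformly elliptic, while $F(z,Du(z))\in C^{0,\alpha}_{loc}$ since $H\in\lip_{loc}(\Om)$. Interior Schauder estimates then yield $u\in C^{2,\alpha}_{loc}(\Om)$ for every $\alpha\in(0,1)$, which in particular makes $u$ a classical solution of \eqref{pmc'}.

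For the higher-regularity statement, I would iterate by differentiating the equation in a coordinate direction $e_k$: setting $w=D_ku$, one obtains a linear elliptic equation of the form $\sum_{i,j}a^{ij}(z,Du)\,D_iD_j w = \Phi(z,Du,D^2u,D_kH)$. Assuming inductively $u\in C^{k+1,\gamma}_{loc}$ and $H\in C^{k,\gamma}_{loc}$, the coefficients and the right-hand side lie in $C^{k-1,\gamma}_{loc}$, hence Schauder gives $w\in C^{k+1,\gamma}_{loc}$, i.e.\ $u\in C^{k+2,\gamma}_{loc}$. The $C^\infty$ case follows by induction. The main obstacle will be the first step: producing a uniformly elliptic divergence-form equation for $u_v$ with right-hand side in a form amenable to De Giorgi--Nash--Moser, in particular handling the $X$-dependent inhomogeneity $f^i_v$ so that it can be absorbed as a divergence of a bounded field; once this is in place, the subsequent bootstrap is standard.
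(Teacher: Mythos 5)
Your proposal is correct and follows essentially the same route as the paper: difference quotients $u_v$, linearization via the fundamental theorem of calculus into a uniformly elliptic divergence-form equation with bounded coefficients, De Giorgi--Nash--Moser for $C^{1,\alpha}_{loc}$, Schauder for $C^{2,\alpha}_{loc}$, and a bootstrap by differentiating the equation for higher regularity. The only (cosmetic) differences are that the paper linearizes along a single path moving $z$ and $\xi$ simultaneously rather than splitting the $z$-dependence into a separate field $f^i_v$, and it stays in divergence form for the Schauder step; your worry about absorbing $f^i_v$ is unfounded, since the weak formulation already pairs it with $D\psi$.
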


\begin{proof}
    Let $\alpha\in (0,1)$. Let $\tilde\Om\Subset\Om$ be a bounded domain. It suffices to show that $u\in C^{2,\alpha}  	(\tilde\Om)$. For $r>0,$ set 
    $$
    \tilde\Om_r=\left\{z\in \tilde\Om\,:\,\min_{w\in\partial\Om}|z-w|>r\right\}.
    $$
	Given  $\psi\in C^\infty_c(\tilde\Om)$, we can assume that $\psi\in C^\infty_c(\tilde\Om_r)$ for $r>0$ small enough.   	Let $v\in B(0,r)$.
    Using $\psi$ and $\psi(\cdot-v)$ as test functions the weak formulation of \eqref{pmc'}, we get
    \begin{equation}\label{eq:weak}
    \int_{\tilde\Om}\escpr{A(z+v, Du(z+v))-A(z, Du(z)), D\psi(z)}\,dz=\int_{\tilde\Om}(H(z)-H(z+v))\psi(z)\,dz,
    \end{equation}
    where
    \begin{equation*}
        A(z,\xi)=\frac{\xi+X(z)}{\sqrt{\varepsilon^2+|\xi+X(z)|^2}}.
    \end{equation*}
     Fixed $z_0\in\tilde\Om_r$, the fundamental theorem of calculus implies that
\begin{equation}\label{eq:weak2}
    A(z_0+v, Du(z_0+v))-A(z_0, Du(z_0))=\int_0^1 \frac{d(A(\alpha_{z_0}(s))}{ds}dt,
\end{equation}
where $\alpha_{z_0}:[0,1]\to \rr^{2n}\times \rr^{2n}$ is given by
    \begin{equation*}
        \alpha_{z_0}(t)=(z_0+tv,D u(z_0)+t(D u(z_0+v)-D u(z_0))).
    \end{equation*}
    Writing $u_v(z)=\frac{u(z+v)-u(z)}{|v|}$, it follows from a direct computation that
    \begin{equation}\label{eq:weak3}
    \begin{split}
         \frac{d(A(\alpha_z(t))}{dt}&=A_z(\alpha_z(t))\cdot v+|v|A_\xi(\alpha_z(t))\cdot D u_v(z),
    \end{split}
    \end{equation}
    where $A_\xi$ is the matrix with entries $(A_\xi)_{ij}=\frac{\partial A_j}{\partial z_i}$ given by
    \begin{equation}\label{shapeexplicit}
        (A_\xi)_{ij}(z,\xi)=\frac{\delta_{ij}(\eps^2+|\xi+ X(z)|^2)-(\xi_i+X(z)_i)(\xi_j+X(z)_j)}{(\eps^2+|\xi+ X(z)|^2)^{\frac{3}{2}}}.
    \end{equation} 
We set
\begin{equation*}
\begin{split}
    \tilde A(z)&= \int_0^1A_\xi(\alpha_z(t))\,dt,\\
    \tilde B_v (z)&=\left(\int_0^1A_z(\alpha_z(t))\,dt\right)\cdot\frac{v}{|v|}\\
    \tilde H_v(z)&=\frac{H(z)-H(z+v)}{|v|}.
    \end{split}
\end{equation*}
Inserting \eqref{eq:weak2} and \eqref{eq:weak3} in \eqref{eq:weak} and dividing by $|v|$, we get
\begin{equation*}
    \int_{\tilde\Om}\langle \tilde A(z)\cdot D u_v(z),D\psi(z)\rangle\,dz=\int_{\tilde\Om}\tilde H_v(z)\psi(z)\,dz-\int_{\tilde\Om}\left\langle\tilde B_v(z),D\psi(z)\right\rangle\,dz.
\end{equation*} 
In other words, $u_v$ is a weak solution on $\tilde\Om$ to the linear equation
\begin{equation*}
    \divv\left(\tilde A\cdot D u_v\right)-\tilde H_v-\divv\tilde B_v.
\end{equation*}
Since $H\in \lip(\tilde\Om)$, the coefficients of $\tilde A$, $\tilde B_v$ and $\tilde H_v$ are uniformly bounded with respect to $v$. Moreover, since $u\in\lip(\tilde\Om)$, \eqref{shapeexplicit} implies that $\tilde A$ is uniformly elliptic on $\tilde\Om$. Finally, since $u\in \lip (\tilde\Om)$, then $u_v$ is bounded in $L^\infty(\tilde\Om)$ uniformly for $v\in B_r$ with $r>0$ small enough. Therefore, using the celebrated De Giorgi-Nash-Moser method (cf. \cite[Chapter 4, Theorem 2.3]{MR1616087} and cf. the proof of the Corollary right after \cite[Chapter 4, Theorem 2.2]{MR1616087}), we conclude that, up to a smaller $\tilde\Om$, $u_v$ is bounded in $C^{0,\alpha}(\tilde\Om)$ uniformly for $v\in B_r$, so that $u\in C^{1,\alpha}(\tilde\Om)$ and $u_v\in C^{1,\alpha}(\tilde\Om)$. In particular, $\tilde A$ and $\tilde B_v$ are of class $C^{0,\alpha}$. Therefore, in view of \cite[Theorem 8,32]{GT} and up to a smaller $\tilde\Om$, $u_v$ is bounded in $C^{1,\alpha}(\tilde\Om)$ uniformly for $v\in B_r$, so that $u\in C^{2,\alpha}(\tilde\Om)$. In particular, $u$ is a classical solution to \eqref{pmc'}. 
Let $k\geq 1$ and assume $H\in C^{k,\gamma}_{loc}(\Om)$. Since $u$ is a weak solution to \eqref{pmc'}, by means of \cite[Chapter 12, Theorem 1.1]{MR1616087} we infer that, for any $j=1,\ldots,2n$, $g:=(Du)_j$ is a weak solution to the linear equation
\begin{equation*}
    \divv\left(\frac{\partial A}{\partial \xi}(z,Du)\cdot Dg\right)=D_jH-\divv\left(\frac{\partial A}{\partial z_j}(z,Du)\right).
\end{equation*}
The thesis then follows exploiting the classical Schauder's theory (cf. \cite[Theorem 5.20]{MR3099262}).
\end{proof}
\subsection{The Dirichlet problem for \eqref{pmc'}}

As a corollary of the global gradient estimates, we extend the existence result obtained in \cite{pmc1} for the sub-Finsler constant mean curvature equation to the existence of solutions for the sub-Riemannian prescribed, but not necessarily constant, mean curvature equation. 

\begin{proof}[Proof of \Cref{existence*}]
Assume first that $H\in C^{1,\alpha}(\overline\Om)$.
Arguing exactly as in \cite{pmc1}, it suffices to provide \emph{a priori} estimates in $C^1(\overline\Om)$ for solutions $u\in C^{2,\alpha}(\overline\Om)$ to \eqref{sigmaeq} with boundary datum $\varphi$ which are independent of $u$ and $\sigma\in[0,1]$. First, by means of \Cref{lem:nonop}, uniform estimates in $L^\infty(\Om)$ follow as in \cite[Proposition 4.4]{pmc1}. Moreover, arguing \emph{verbatim} as in \Cref{uisc2}, any solution $u\in C^{2,\alpha}(\overline\Om)$ to \eqref{sigmaeq} belongs to $C^3(\Om)\cap C^1(\overline\Om)$. Hence, \Cref{mainigeglobaldue} provides global gradient estimates uniform with respect to $\sigma\in[0,1]$. 
 Assume now that $H\in\lip(\Om)$, and denote by $H_0$ its Lipschitz constant. By McShane's extension theorem, we can suppose that $H\in\lip(\rr^{2n})$ with the same Lipschitz constant $H_0$. By a standard mollification argument, there exists a sequence $(H_j)_j\subseteq C^\infty(\overline\Om)$ such that
\begin{equation}\label{approxhlip}
    H_j\to H \text{ uniformly on } \overline\Om\qquad\text{and}\qquad\|H_j\|_{C^1(\overline\Om)}\leq H_0+\|H\|_{L^\infty(\Om)}+1
\end{equation}
for any $j\in\mathbb N$. Since $H$ and $\Om$ satisfy \eqref{Giusti} and \eqref{subextremal}, we let $\delta$ be as in the statement of \Cref{lem:nonop}.  Let us denote by $h(\Om)$ the \emph{Cheeger constant} of $\Om$ (cf. \cite{MR3467379}), that is 
\begin{equation*}
    h(\Om)=\inf\left\{\frac{P(A)}{|A|}\,:\,A\subseteq\Om,\,|A|>0\right\}.
\end{equation*}
Being $\Om$ bounded and open, it is well known (cf. \cite[Proposition 3.5]{MR3467379}) that $h(\Om)>0$. By \eqref{approxhlip}, we can assume up to a subsequence that
\begin{equation*}
    \|H-H_j\|_{L^\infty(\Om)}\leq \frac{\delta h(\Om)}{2},
\end{equation*}
so that
 \begin{equation}\label{stima1perlip}
 \Big|\int_{A} H_jdz\Big|\leq  \Big|\int_{A} Hdz\Big|+|A|\|H-H_j\|_{L^\infty(\Om)}\leq\left(1- \frac{\delta}{2}\right)P(A)
 \end{equation}
for any $A\subseteq\Om$ such that $|A|\neq 0$. On the other hand, by \eqref{Serrinstrict} and \eqref{approxhlip}, there exists $C=C(\Om,\|H\|_{L^\infty(\Om)})>0$ such that, up to a subsequence,
\begin{equation}
\label{stima1perlip2}
    |H_j(z_0)|\leq H_{\partial\Om}(z_0)-C
\end{equation}
for any $z_0\in\partial\Om$ and any $j\in\mathbb N$. Combining \eqref{stima1perlip} and \eqref{stima1perlip2}, from the previous step we get a solution $u_j\in C^{2,\alpha}(\overline\Om)$ to \eqref{pmc'} with boundary datum $\varphi$ and source $H_j$. Again by \eqref{approxhlip}, \eqref{stima1perlip} and \eqref{stima1perlip2}, and following \cite[Proposition 4.4]{pmc1}, \cite[Proposition 4.8]{pmc1} and \Cref{mainigeglobaldue}, $(u_j)_j$ is uniformly bounded in $C^1(\overline\Om)$, so that, by Ascoli-Arzelà Theorem, there exists $u\in \lip(\Om)$ such that $u_j\to u$ uniformly on $\overline\Om$. First, notice that $u=\varphi$ on $\partial\Om$. Finally, a compactness argument as the forthcoming \Cref{hcompthm}, coupled with \Cref{minimplieslambdamin} and \Cref{uisc2}, implies that $u\in C^{2,\alpha}_{loc}(\Om)$ and that $u$ is a classical solution to \eqref{pmc'}, whence the thesis follows.
\end{proof}

\subsection{Lipschitz regularity of $t$-graphs}
Throughout this subsection, we fix a bounded domain $\Om\subseteq\rr^{2n}$ with Lipschitz boundary and $H\in C^{1,\ga}_{loc}(\Om)$ for some $\ga\in(0,1)$. We are left to show that $BV$-minimizers of $\mathcal{I}_\varepsilon$ are locally Lipschitz continuous.

\begin{proposition}\label{reglambdamin}
     Let  $H\in C^{1,\gamma}_{loc}(\Om)$  and let $u\in BV(\Om)$ be such that $E_u$ is an $H$-minimizer for $P_\eps$ on $\Om\times\rr$.
     Then $\partial^* E_u$ is a $C^{3,\gamma}$ manifold
     and $\mathcal H^{s}(\partial E\setminus\partial^*E)=0$ for any $s>2n-7$.
\end{proposition}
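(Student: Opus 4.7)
My plan is to observe that $E_u$ is an almost-minimizer of the anisotropic perimeter $P_\eps$, apply the regularity theory and dimension reduction from \cite{MR4430590}, and finally bootstrap to $C^{3,\gamma}$ by passing to a local graph parametrization and invoking \Cref{uisc2}.

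The first ingredient is the almost-minimality already recorded in \eqref{quasiminimizerdef}: for every domain $\tilde\Om\Subset\Om$ and setting $H_0=\|H\|_{L^\infty(\tilde\Om)}$, one has
\[
P_\eps(E_u,B(p,r))\leq P_\eps(F,B(p,r))+H_0\,|B(p,r)|
\]
whenever $B(p,r)\Subset\tilde\Om\times\rr$ and $E_u\Delta F\Subset B(p,r)$. Since $g_\eps$ is smooth on compact subsets and \eqref{ineq:perieq} makes $P_\eps$ comparable to the Euclidean perimeter, $P_\eps$ fits the framework of uniformly elliptic spatially inhomogeneous anisotropic perimeters treated in \cite{MR4430590}, and the inequality above is the almost-minimality condition with linear modulus of continuity $\omega(r)=H_0\,r$.

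Next I would quote two outputs of \cite{MR4430590}. First, a Tamanini-type $\eps$-regularity theorem for almost-minimizers of anisotropic perimeters, combined with the uniform density bounds of \Cref{prop:voldensity}, ensures that outside a relatively closed singular set $\Sigma$ the reduced boundary $\partial^*E_u$ is locally the graph of a $C^{1,\alpha}$ function for every $\alpha\in(0,1/2)$. Second, a dimension reduction argument (blowing up at singular points to minimizing cones of the frozen constant-coefficient anisotropy, and inheriting the classical codimension-$8$ bound from the Euclidean theory) yields $\dim_{\mathcal H}\Sigma\leq(2n+1)-8=2n-7$, so that $\mathcal H^s(\partial E_u\setminus\partial^*E_u)=0$ for every $s>2n-7$.

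To upgrade $C^{1,\alpha}$ to $C^{3,\gamma}$ I would argue locally at each regular point $p\in\partial^*E_u$. If $\ve(p)$ is not horizontal, a neighbourhood of $p$ is a classical vertical graph $t=v(z)$ of a $C^{1,\alpha}$ function and $v$ is a weak solution on a small ball to \eqref{pmc'} with right-hand side $H\in C^{1,\gamma}_{loc}$; by \Cref{uisc2} this forces $v\in C^{2,\gamma}_{loc}$, and a further differentiation of the equation together with the Schauder estimates applied to each $D_jv$ produces $v\in C^{3,\gamma}_{loc}$. If instead $\ve(p)$ is horizontal, I would pass to an intrinsic graph parametrization in the sense of \cite{MR2223801}, as done in \Cref{prop:regu}: the corresponding prescribed mean curvature equation remains quasilinear and uniformly elliptic in a neighbourhood of $p$, so the De Giorgi--Nash--Moser plus Schauder bootstrap used in the proof of \Cref{uisc2} carries over verbatim.

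The step I expect to be the main obstacle is precisely the treatment of points where $\ve$ is nearly horizontal, which requires the intrinsic graph machinery of \cite{MR2223801} and the detailed analysis that the authors defer to \Cref{prop:regu}; once that reduction is available, the Schauder iteration producing $C^{3,\gamma}$ regularity is routine, and the singular set bound follows directly from the anisotropic theory of \cite{MR4430590}.
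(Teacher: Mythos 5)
Your first step coincides with the paper's: both verify the almost-minimality \eqref{quasiminimizerdef} with excess $H_0|E\Delta F|\leq H_0 w_n r^{2n+\alpha}$, check that $P_\eps$ is an anisotropic perimeter with uniformly elliptic, H\"older-continuous integrand $\sqrt{\langle M_\eps\nu,\nu\rangle}$, and quote \cite{MR4430590} for $C^{1,\alpha}$ regularity of $\partial^*E_u$ together with the $(2n-7)$-dimensional bound on the singular set. Where you diverge is the bootstrap from $C^{1,\alpha}$ to $C^{3,\gamma}$. The paper does \emph{not} split into cases according to the direction of $\ve(p)$: at every regular point it writes $S$ as a Euclidean graph $\Phi_{\tilde u}(U)=\{q-\nu(p)\tilde u(q)\}$ over its own tangent hyperplane at $p$, so that $D\tilde u(p)=0$ and the $g_\eps$-area functional, being a smooth Riemannian integrand, yields a uniformly elliptic Euler--Lagrange equation near $p$ to which the difference-quotient plus De Giorgi--Nash--Moser plus Schauder scheme of \Cref{uisc2} applies directly. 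This sidesteps entirely the point you flag as the main obstacle. Your alternative treatment of points with horizontal normal via a $Y_1$-graph is viable in principle (such points are automatically non-characteristic, so the intrinsic implicit function theorem applies, and for $\eps\neq 0$ the frame $W^{\varphi,\eps}$ is a genuine basis of $\rr^{2n}$, so \eqref{eq:intweak} is uniformly elliptic), but your claim that the bootstrap of \Cref{uisc2} "carries over verbatim" is too quick: the vector field $W_1^{\varphi,\eps}=\partial_{x_1}+2\varphi T$ depends on the unknown $\varphi$, so the operator is not translation-invariant in the independent variable and the difference-quotient argument of \Cref{uisc2} does not apply literally; one would instead have to invoke Schauder theory for general quasilinear elliptic equations whose coefficients depend on $(w,\varphi,D\varphi)$. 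The intrinsic-graph machinery is reserved in the paper for \Cref{prop:regu}, whose purpose is different (transferring regularity from $\partial E_u$ to the function $u$ by excluding vertical tangencies via a maximum principle), not for proving regularity of the surface itself. In short: your argument can be completed, but the tangent-plane parametrization buys you a one-case proof with a genuinely uniformly elliptic, translation-invariant-in-structure equation, whereas your route requires extra care precisely at the step you identified as delicate.
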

\begin{proof}
Let $u$ be as in the statement. Let $\tilde\Om\Subset\Om$ be an open set. Fix $\alpha\in(0,1)$ and let $F\Subset \tilde\Om\times\rr$ be a finite perimeter set  such that $E\Delta F\Subset B(p,r)\Subset\tilde\Om\times\rr$ for some $p\in\tilde\Om\times\rr$ and $r\in(0,1)$. Taking $H_0=\|H\|_{L^\infty(\tilde\Om)}$, it follows that
\begin{equation*}
    \begin{split}
P_\varepsilon(E,B(p,r))&\leq P_\varepsilon(F,B(p,r))+H_0|E\Delta F|\leq 
P_\varepsilon(F,B(p,r))+H_0w_nr^{2n+\alpha}.
    \end{split}
\end{equation*}
Moreover, for any Caccioppoli set $F$ in $\Om\times\rr$ and any open set $A\subseteq\Om\times\rr$ it holds that
\begin{equation*}
    P_\varepsilon(F,A)=\int_{\partial^\star F\cap A}|C_\varepsilon(p)^T\nu(p)|\,d\mathcal H^{2n}(p)=\int_{\partial^\star F\cap A}\sqrt{\langle M_\varepsilon(p)\nu(p),\nu(p)\rangle}\,d\mathcal H^{2n}(p),
\end{equation*}
where $C_\eps$ is defined in \eqref{base}, $M_\varepsilon(p)=C_\varepsilon(p)^T\cdot C_\varepsilon(p)$ for any $p\in\Om\times\rr$ and $\nu$ is the measure theoretic Euclidean unit normal to $F$. It is easy to check that $ M_\varepsilon $ is uniformly positive definite and $\alpha$-H\"older continuous. Then \cite[Theorem 1.1]{MR4430590} implies that $E_u$ is a manifold of class $C^{1,\frac{\alpha}{4}}$,
     and moreover $\mathcal H^{s}(\partial E\setminus\partial^*E)=0$ for any $s>2n-7$.  
     Fix $0<\alpha<1$ such that $\frac{\alpha}{4}\leq \gamma$. Since $S$ is $C^{1,\frac{\alpha}{4}}$, for any $p\in S$ we can consider an open neighborhood $U$ of $p$ in the tangent hyperplane of $S$ at $p$ where $S$ coincides with $\Phi_{\tilde u}(U):=\{q-\nu(p)\tilde u(q): q\in U\}$ for a suitable function $\tilde u$ of class $C^{1,\frac{\alpha}{4}}$. Given $v\in W^{1,1}(U)$, we let $A(v)$ be the area of $\Phi_v(U)$ with respect to the metric $g_\eps$. By \Cref{minimplieslambdamin}, $\tilde u$ is a critical point of the functional 
     \begin{equation}\label{auxfunc}
          I(v)=A(v)+\int_U \tilde Hv,
     \end{equation}
 for a suitable $\tilde H\in C_{loc}^{1,\frac{\alpha}{4}}(U)$. Writing $A$ in local coordinates, and writing the Euler-Lagrange equation associated to \eqref{auxfunc} for $\tilde u$, one can argue exactly as in the proof of \Cref{uisc2} to conclude that $\tilde u\in C_{loc}^{2,\beta}(U)$ for any $\beta\in (0,1)$. Since $\tilde H\in C_{loc}^{1,\gamma}(U)$, the thesis follows as in \Cref{uisc2}.
\end{proof}
Let $\pi:\mathbb H^n\to \Om$ be the projection on the first $2n$ components and $u\in BV(\Om)$. We denote 
\begin{equation}\label{def:S}
\Om_{u,0}=\pi(\partial E_u\setminus\partial^\star E_u).
\end{equation}
We say that $S\subseteq\hh^n$ is locally a \emph{$Y_1$-graph} around  $p=(\bar x_1,\ldots,\bar x_n,\bar y_1,\ldots,\bar y_n,\bar t)\in S$ if there exists a neighborhood $U\subseteq\rr^{2n}$ of $\bar w=(\bar x_1,\ldots,\bar x_n,\bar y_2,\ldots,\bar y_n,\bar t+\bar x_1\bar y_1)\in\rr^{2n}$ and a function $\varphi\in C(U)$ such that
   \begin{equation*}
       S=\{(x_1,\ldots,x_n,\varphi(w),y_2,\ldots,y_n,t-x_1\varphi(w))\,:\,w=(x_1,\ldots,x_n,y_2,\ldots,y_n,t)\in U\}
   \end{equation*}
    (cf. \cite{MR1871966}). Identifying $\rr^{2n}$ with the hyperplane $\{y_1=0\}\subseteq \hh^n$, we follow \cite{MR2223801, MR2333095} and consider the vector fields on $U$ given by 
    \begin{equation}\label{def:W}
    W^{\varphi,\eps}_1=\frac{\partial}{\partial x_1}+2\varphi  T|_{U},\quad W^{\varphi,\eps}_j=X_j|_{U},\quad W^{\varphi,\eps}_{n+j-1}=Y_j|_{U}\quad\text{and}\quad W_{2n}^{\varphi,\eps}=\eps T|_{U},
\end{equation}
where $j=2,\ldots,n$. We denote by $\left(\z_j\right)^\star$ the adjoint operator of $\z_j$ with respect to $L^2(\rr^{2n})$, we write $W^{\varphi,\eps}=(W^{\varphi,\eps}_1,\ldots,W^{\varphi,\eps}_{2n})$ and we let $C^\varphi_\eps$ be the coefficient matrix associated to the family $\z$. Notice that for any $j=2,\ldots,2n$, it holds that
    \begin{equation}\label{eq:adj}
        \left(\z_1\right)^\star \phi=-\z_1 \phi-2\phi T|_U\varphi\qquad\text{and}\qquad \left(\z_j\right)^\star \phi=-\z_j \phi.
   \end{equation}
and hence 
\begin{equation*}
       \left(\z_j\right)^\star \z_{2n}=-\z_{2n}\z_j.
    \end{equation*}
The following standard first variation formula follows as in \cite[Section 3.1]{MR2333095}.
\begin{lemma}
Let $u\in BV(\Om)$ be such that $E_u$ is an $H$-minimizer for $P_\eps$ on $\Om\times\rr$, and set $S=\graf(u)$. Assume that $S$ is the $Y_1$-graph of  $\varphi\in C^2(U)$, where $U\subseteq\rr^{2n}$. Then 
    \begin{equation}\label{eq:intweak}
        -\sum_{j=1}^{2n}\int_U\frac{\z_j\varphi\left(\z_j\right)^\star\psi}{\sqrt{1+|W^{\varphi,\eps}\varphi|^2}}\,dw=\int_UH(x_1,,\ldots,x_n,\varphi(w),y_2,\ldots,y_n)\psi\,dw
    \end{equation}
for any $\psi\in C^\infty_c(U)$.
\end{lemma}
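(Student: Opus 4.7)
The plan is to derive the identity by a standard perturbation argument performed directly in the $Y_1$-graph parametrization. Fix $\psi \in C^\infty_c(U)$ and, for $s$ small, consider the family $\varphi_s = \varphi + s\psi$, whose $Y_1$-graphs $S_s = \Phi_{\varphi_s}(U)$ bound sets $E_s$ that coincide with $E_u$ outside a compact subset of $A := \Phi_\varphi(\supp\psi) \cdot \rr\, Y_1$. Since $\psi$ has compact support in $U$, the symmetric difference $E_s \Delta E_u$ is compactly contained in $A$. By \Cref{hmindeff}, the map
\[
s \longmapsto P_\varepsilon(E_s, A) + \int_{E_s \cap A} H\,dx
\]
is therefore minimized at $s = 0$; differentiating at $s=0$ will yield \eqref{eq:intweak}.

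For the perimeter part, the Riemannian area of the $Y_1$-graph of $\varphi_s$ on $(\hh^n, g_\eps)$ equals $\int_U \sqrt{1+|W^{\varphi_s,\eps}\varphi_s|^2}\,dw$ (see \cite{MR2223801,MR2333095}). The key observation is that only $W^{\varphi,\eps}_1 = \partial_{x_1} + 2\varphi T|_U$ depends on $\varphi$, while $W^{\varphi,\eps}_j$ for $j\ge 2$ does not. A direct computation combined with \eqref{eq:adj} gives
\[
\frac{d}{ds}\Big|_{s=0} W^{\varphi_s,\eps}_j \varphi_s \;=\; -\bigl(W^{\varphi,\eps}_j\bigr)^* \psi \qquad\text{for all } j = 1,\ldots,2n,
\]
since for $j=1$ the $s$-derivative produces the extra term $2\psi\, T|_U \varphi$ that precisely matches the $\varphi$-dependent correction in $(W^{\varphi,\eps}_1)^*$. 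Differentiating the square root integrand then produces exactly the left-hand side of \eqref{eq:intweak}.

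For the volume term, the map $(x,y,t) \mapsto (w, y_1)$ with $w = (x_1,\ldots,x_n,y_2,\ldots,y_n,t + x_1 y_1)$ has unit Jacobian, and in these coordinates $E_s$ locally reads $\{y_1 > \varphi_s(w)\}$ (with a sign of orientation fixing the direction of the outer normal on $S$). Since $H$ is independent of the $t$-variable, Fubini yields
\[
\int_{E_s \cap A} H\,dx \;=\; \int_U \int_{\varphi_s(w)}^{+\infty} H(x_1,\ldots,x_n,y_1,y_2,\ldots,y_n)\,dy_1\,dw \,+\,\mathrm{const},
\]
and differentiating at $s=0$ produces the right-hand side of \eqref{eq:intweak}. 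Setting the sum of the two derivatives to zero, which is legitimate by the $H$-minimality obtained in the first paragraph, gives the claimed identity for every $\psi \in C^\infty_c(U)$.

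The principal technical point is to justify that the Riemannian area element of the $Y_1$-graph parametrized by $F(w) = (x_1,\ldots,x_n,\varphi(w),y_2,\ldots,y_n, t - x_1\varphi(w))$ equals $\sqrt{1+|W^{\varphi,\eps}\varphi|^2}\,dw$; this amounts to expressing the tangent vectors $\partial_{x_i}F$, $\partial_{y_j} F$, $\partial_t F$ in the orthonormal frame $\{X_1,\ldots,Y_n,\eps T\}$ and computing the induced metric, a computation carried out in the cited works. Once this area formula and the adjoint identities \eqref{eq:adj} are in place, the derivation above is standard, and both sign conventions (for $(W^{\varphi,\eps}_1)^*$ and for the orientation of $E_u$ relative to $S$) combine to give the formula as stated.
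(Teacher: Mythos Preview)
Your proposal is correct and follows precisely the first-variation approach that the paper invokes; in fact the paper gives no proof at all beyond the sentence ``The following standard first variation formula follows as in \cite[Section 3.1]{MR2333095}'', so your argument supplies the details of that citation. The key computation $\frac{d}{ds}\big|_{s=0} W^{\varphi_s,\eps}_j\varphi_s = -(W^{\varphi,\eps}_j)^\star\psi$, together with the unit-Jacobian change of variables for the volume term, is exactly what is needed, and your hedging on the orientation of $E_u$ relative to $\{y_1>\varphi(w)\}$ is appropriate since the sign is fixed by the outer normal convention of the subgraph.
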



\begin{proposition}\label{prop:regu}
    Let 
    $H\in C^{1,\gamma}_{loc}(\Om)$ 
    and let $u\in BV(\Om)$ be such that $E_u$ is an $H$-minimizer for $P_\eps$ on $\Om\times\rr$. Then $u\in C^{3,\gamma}_{loc}(\Om\setminus \Om_{u,0})$ 
    and $\mathcal H^{s}(\Om_{u,0})=0$ for any $s>2n-7$. 
\end{proposition}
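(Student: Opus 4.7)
The proof splits into the Hausdorff dimension bound for $\Om_{u,0}$ and the local $C^{3,\gamma}$ regularity of $u$ away from $\Om_{u,0}$. The first is essentially immediate: the Euclidean projection $\pi \colon \hh^n \to \rr^{2n}$ is $1$-Lipschitz, so
\[
\mathcal{H}^s(\Om_{u,0}) = \mathcal{H}^s\bigl(\pi(\partial E_u \setminus \partial^* E_u)\bigr) \leq \mathcal{H}^s(\partial E_u \setminus \partial^* E_u),
\]
and the right-hand side vanishes for every $s > 2n-7$ by \Cref{reglambdamin}.

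For the regularity, fix $z_0 \in \Om \setminus \Om_{u,0}$. By \Cref{linftyloc}, $u$ is locally bounded around $z_0$, so that the vertical fiber $\pi^{-1}(z_0) \cap \partial E_u$ is compact. Since $z_0 \notin \Om_{u,0}$, every point $p$ of this fiber lies in $\partial^* E_u$, which by \Cref{reglambdamin} is a $C^{3,\gamma}$ embedded hypersurface in a neighborhood of $p$, with continuously varying Riemannian unit normal $\ve$. I would then distinguish two cases according to the behavior of $\ve$ at such a point. If $\vet(p) \neq 0$, the Euclidean tangent plane is not vertical and the implicit function theorem immediately presents $\partial^* E_u$ as a $C^{3,\gamma}$ $t$-graph near $p$, which must coincide with the precise representative of $u$, giving $u \in C^{3,\gamma}$ near $z_0$. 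If instead $\vet(p)=0$, then the horizontal part of $\ve$ at $p$ is nonzero, and after a suitable permutation of indices $\partial^* E_u$ is locally an intrinsic $Y_1$-graph of some $\varphi \in C^{3,\gamma}$; this regularity of $\varphi$ is inherited from \Cref{reglambdamin} through the chart, and is also consistent with the quasilinear first variation \eqref{eq:intweak}, whose elliptic structure allows bootstrapping analogous to the proof of \Cref{uisc2}.

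The final step is to translate $C^{3,\gamma}$ regularity of $\varphi$ back into that of $u$. Writing out the parametrization $(x_1,\ldots,x_n,\varphi(w),y_2,\ldots,y_n,t-x_1\varphi(w))$ of $\partial^* E_u$, expressing the $t$-coordinate of a point on $\partial^* E_u$ as a function of its $(x,y)$-coordinates via the implicit function theorem requires $\partial_t \varphi\neq 0$. The subgraph structure of $E_u$ gives the sign constraint $\vet \geq 0$ along $\partial^* E_u$, and I expect the key observation to be that any persistent vertical-tangent locus of $\partial^* E_u$ above $z_0$ would produce a vertical segment in $\partial E_u$ whose endpoints are corner points, hence non-reduced, forcing $z_0 \in \Om_{u,0}$ and contradicting the choice of $z_0$. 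This is the main obstacle I anticipate: reconciling the possible presence of Euclidean vertical tangents of $\partial^* E_u$ with the minimality of $E_u$ and the structure of $\Om_{u,0}$, so as to guarantee $\partial_t \varphi\neq 0$ above every point of $\Om\setminus\Om_{u,0}$. Resolving it rigorously would combine the subgraph sign constraint, minimality through \eqref{eq:intweak}, and a local analysis at each fiber point; once in place, the implicit function theorem yields $u \in C^{3,\gamma}$ near $z_0$, closing the argument.
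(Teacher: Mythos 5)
Your treatment of the measure bound (Lipschitz projection) and of the non-vertical case ($\vet(p)\neq 0$ plus the implicit function theorem) matches the paper. But the crux of the proposition is exactly the step you flag as "the main obstacle I anticipate" and then leave unresolved: ruling out a vertical tangent plane of $\partial^* E_u$ at a point $p$ lying over $z_0\in\Om\setminus\Om_{u,0}$. Your proposed mechanism — that a vertical tangent would "produce a vertical segment in $\partial E_u$ whose endpoints are corner points, hence non-reduced" — does not work as stated for an \emph{isolated} tangency: a point where $\vet(p)=0$ but $\partial^* E_u$ is a smooth reduced boundary point creates no vertical segment and no non-reduced points (think of the subgraph of $x\mapsto x^{1/3}$ in one dimension, whose boundary is a $C^\infty$ curve with a single vertical tangent, while the function itself fails to be $C^1$). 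So an isolated vertical tangency is perfectly compatible with $z_0\notin\Om_{u,0}$ as far as your argument goes, and it would destroy the $C^{3,\gamma}$ regularity of $u$ at $z_0$.

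The missing ingredient is a propagation argument. In the paper one writes $S$ near $p$ as a $Y_1$-graph of $\varphi\in C^{3,\gamma}(U)$, observes from the subgraph structure that $T|_U\varphi\geq 0$ on $U$ with $T|_U\varphi(\bar w)=0$, and then shows — by testing the first variation identity \eqref{eq:intweak} with $W_{2n}\psi$ and integrating by parts — that $g=W_{2n}\varphi$ solves a linear divergence-form equation whose coefficient matrix $\tilde A$ is uniformly elliptic after passing to Euclidean coordinates via $C^{\varphi,\eps}$. The strong maximum principle then forces $T|_U\varphi\equiv 0$ on a neighborhood, i.e.\ the surface is a vertical cylinder over an open set $V$ with $\mathcal H^{2n-1}(V)>0$; only at this stage does the Giusti-type argument apply to give $V\subseteq\Om_{u,0}$, contradicting $\mathcal H^{2n-1}(\Om_{u,0})=0$. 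Without this maximum-principle step (which requires the precise elliptic structure of the equation satisfied by the verticality function $T\varphi$, not just the regularity of $\varphi$), your argument has a genuine gap at the decisive point.
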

\begin{proof}
   Let $z=(\bar x,\bar y)\in\Om\setminus \Om_{u,0}$, where $\bar x=(\bar x_1,\ldots,\bar x_n)$ and $\bar y=(\bar y_1,\ldots,\bar y_n)$. By \Cref{reglambdamin}, $S=\graf (u)$ is a hypersurface of class $C^{3,\gamma}$ 
   near $p=(z,u(z))$, and $\mathcal{H}^{s}(\Om_{u,0})=0$ for any $s>2n-7$. Hence it suffices to show that $g_\eps(T, \ve)\neq 0$ on $\Om\setminus \Om_{u,0}$, where $\ve$ is the outer unit normal of $S$. Indeed, assume by contradiction that  $g_\eps(T, \ve(p))=0$. Then $\ve(p)\in \hhh_p$, and so $p$ is non-characteristic, since otherwise $\ve(p)\in\hhh_p=T_p S$. Therefore, by the implicit function theorem for intrinsic graphs (cf. \cite[Theorem 6.5]{MR1871966}) 
   $S$ is, locally near $p$, a $Y_1$-graph with respect to a continuous function $\varphi$ defined on an open neighborhood $U\subseteq\rr^{2n}$ of $\bar w=(\bar x_1,\ldots,\bar x_n,\bar y_2,\ldots,\bar y_n,\bar t+\bar x_1\bar y_1)$. Moreover, arguing as in \cite[Proposition 6.1]{ruled}, $\varphi\in C^{3,\gamma}(U)$. 
   Since $S$ is a $t$-graph vertical at $p$,  up to choosing a smaller $U$, we infer that
    \begin{equation}\label{premaxprin}
        T|_U\varphi(\bar w)=0\qquad\text{and}\qquad T|_U\varphi (w)\geq 0
    \end{equation}
    for any $w\in U$. 
If $W^{\varphi,\eps}$ is defined as in \eqref{def:W}, in the following we drop the superscript for the sake of clarity. Taking $\psi=W_{2n}\psi$ in \eqref{eq:intweak} and using \eqref{eq:adj} and the definition of adjoint operator, we infer that
\begin{equation*}
    \begin{split}
       \int_U\frac{\partial H}{\partial y_1}W_{2n}\varphi\psi\,dw&=-\int_UHW_{2n}\psi\,dw\\
       &=\sum_{j=1}^{2n}\int_U\frac{W_j\varphi W_j^\star \big(W_{2n}\psi\big)}{\sqrt{1+|W\varphi|^2}}\,dw\\
        =& \sum_{j=1}^{2n}\int_UW_{2n} \left(\frac{W_j\varphi}{\sqrt{1+|W\varphi|^2}}\right)W_{j}\psi\,dw\\
        =& \sum_{j,k=1}^{2n}\int_U\left\lbrace 
        \frac{W_{2n}\big(W_j\varphi\big)}{\sqrt{1+|W\varphi|^2}}
        -\frac{W_{2n}\big(W_{k}\varphi \big) W_{k}\varphi W_j\varphi}{\left(1+|W\varphi|^2\right)^{\frac{3}{2}}}  \right\rbrace W_{j}\psi\,dw.
    \end{split}
\end{equation*}
Setting $g=W_{2n}\varphi$, we get
\begin{equation*}
        0=\sum_{j,k=1}^{2n}\int_U\left\lbrace 
        \left(\frac{W_j^\star g}{\sqrt{1+|W\varphi|^2}}
        -\frac{W_k^\star g  W_{k}\varphi W_j\varphi}{\left(1+|W\varphi|^2\right)^{\frac{3}{2}}} \right)  W_{j}\psi+\frac{\partial H}{\partial y_1}g\psi\right\rbrace\,dw,
\end{equation*}
that can be rewritten as
\begin{equation*}
\int_UW\psi\cdot A\cdot\left(W g\right)^T+\langle W\psi,B\rangle g-\frac{\partial H}{\partial y_1}g\psi\,dw=0,
\end{equation*}
where $A= \left(A_{jk}\right)_{jk}$  and $B=(B_j)_j$ are defined by
\begin{equation*}
    A_{jk}=\frac{\left(1+|W\varphi|^2\right)\delta_{jk}-W_j\varphi W_k\varphi}{\left(1+|W \varphi|^2\right)^{\frac{3}{2}}}\qquad\text{and}\qquad  B_j= 2 T|_U\varphi A_{j1}
    \end{equation*}
for any $j,k=1,\ldots,2n$.
Since $W\varphi$ is continuous, an easy computation shows that, up to choosing a smaller $U$, there exists $\alpha>0$ such that $ \xi\cdot A(w)\cdot\xi^T\geq \alpha|\xi|^2$
for any $w\in U$ and any $\xi\in\rr^{2n}$. Moreover, as $W f=Df\cdot C^{\varphi,\eps}$
for any $f\in C^1(U)$, we infer that
\begin{equation*}
    \int_U D\psi\cdot \tilde A\cdot(Dg)^T+\langle D\psi,\tilde B\rangle g-\frac{\partial H}{\partial y_1}g\psi\,dw=0,
\end{equation*}
where
\begin{equation*}
    \tilde A=C^{\varphi,\eps}\cdot A\cdot \left(C^{\varphi,\eps}\right)^T\qquad\text{and}\qquad\tilde B=B \cdot \left(C^{\varphi,\eps}\right)^T.
\end{equation*}
Being $C^{\varphi,\eps}(w)$ invertible and continuous in $w$, up to restricting $U$ there exists $\tilde\alpha>0$ such that 
\begin{equation*}
\begin{split}
        \xi\cdot \tilde A\cdot\xi^T&=\xi\cdot C^{\varphi,\eps}\cdot A\cdot \left(C^{\varphi,\eps}\right)^T\cdot\xi^T=\xi\cdot C^{\varphi,\eps}\cdot A\cdot \left(\xi\cdot C^{\varphi,\eps}\right)^T\geq\alpha|\xi\cdot C^{\varphi,\eps}|^2\geq\tilde\alpha|\xi|^2
\end{split}
\end{equation*}
on $U$ for any $\xi\in\rr^{2n}$.
Hence $\tilde A$ is uniformly elliptic on $U$. Therefore, recalling \eqref{premaxprin} and choosing a suitable smaller neighborhood $U$, we can apply a strong maximum principle as in \cite[page 73, Theorem 10]{MR0762825} to conclude that $T|_U\varphi\equiv 0$
on $U$. In particular, $g_\eps(T,\ve)=0$ in a neighborhood $O$ of $p$ and  $\mathcal{H}^{2n-1}(V)>0$, where $V=\pi(O)$. Arguing \emph{verbatim} as in \cite[page 169]{MR0775682}, it follows that $V\subseteq \Om_{u,0}$, which is a contradiction with $\mathcal{H}^{2n-1}(\Om_{u,0})=0$.
\end{proof}
\begin{proposition}\label{wunoloc}
  Let $H\in C^{1,\gamma}_{loc}(\Om)$ and let $u\in BV(\Om)$ be such that $E_u$ is an $H$-minimizer for $P_\eps$ on $\Om\times\rr$. Then $u\in W^{1,1}(\Om).$
\end{proposition}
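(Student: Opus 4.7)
The plan is to combine the representation formula from \Cref{reprlemma} with the partial regularity result from \Cref{prop:regu} to show that the singular part of the distributional derivative $\tilde Du$ vanishes identically. Since $u \in BV(\Om)$, we may decompose $\tilde Du = Du\,\mathcal L^{2n} + (Du)_s$ with $(Du)_s = (Du)^j + (Du)^c$ (jump plus Cantor parts), and the goal reduces to proving $(Du)_s \equiv 0$ on $\Om$.

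First, I would apply \Cref{reprlemma} with $\tilde\Om = \Om$ to get
\begin{equation*}
P_\eps(E_u,\Om\times\rr) = (Du)_s(\Om) + \int_\Om \sqrt{\eps^2 + |Du+X|^2}\,dz,
\end{equation*}
and observe that the right-hand side is finite (since $u \in BV(\Om)$ forces $|\tilde Du|(\Om) < \infty$). In particular, the absolutely continuous part $Du$ already lies in $L^1(\Om)$, so the only thing left to check is the vanishing of $(Du)_s$. Next, I would invoke \Cref{prop:regu} to obtain $u \in C^{3,\gamma}_{loc}(\Om \setminus \Om_{u,0})$ together with $\mathcal H^{s}(\Om_{u,0}) = 0$ for every $s > 2n-7$; in particular, $\mathcal H^{2n-1}(\Om_{u,0}) = 0$. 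Combining this with the local boundedness from \Cref{linftyloc}, for every open $V \Subset \Om \setminus \Om_{u,0}$ we get $u \in W^{1,\infty}(V)$, hence $(Du)_s(V) = 0$. Exhausting $\Om \setminus \Om_{u,0}$ by such sets shows that the finite Radon measure $(Du)_s$ is concentrated on $\Om_{u,0}$.

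The last step is then to use standard properties of BV functions to upgrade this concentration to vanishing. The jump part $(Du)^j$ is absolutely continuous with respect to $\mathcal H^{2n-1}$, so $\mathcal H^{2n-1}(\Om_{u,0}) = 0$ immediately gives $(Du)^j \equiv 0$. For the Cantor part $(Du)^c$, I would rely on the classical fact that it vanishes on every Borel set which is $\sigma$-finite with respect to $\mathcal H^{2n-1}$; since $\Om_{u,0}$ is $\mathcal H^{2n-1}$-negligible, this forces $(Du)^c \equiv 0$ as well. Putting these two facts together gives $(Du)_s \equiv 0$ and hence $u \in W^{1,1}(\Om)$.

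The step I expect to be the most delicate is the claim that $(Du)_s$ is concentrated on $\Om_{u,0}$: this requires carefully translating the pointwise $C^{3,\gamma}$ regularity of $u$ outside $\Om_{u,0}$ into an absolute continuity statement for the derivative on arbitrary open subsets of $\Om \setminus \Om_{u,0}$, which in turn relies on the fact that $\Om \setminus \Om_{u,0}$ is open (so that \Cref{prop:regu} genuinely produces classical derivatives on a neighborhood of each of its points) and on the $L^\infty_{loc}$ bound of \Cref{linftyloc} to ensure that the restriction of $u$ to each such open subset is truly Sobolev.
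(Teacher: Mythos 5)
Your proposal is correct, and it shares the paper's overall skeleton — both arguments reduce to showing that the singular part $(Du)_s$ is carried by $\Om_{u,0}$ (using the classical regularity of $u$ on the open set $\Om\setminus\Om_{u,0}$ from \Cref{prop:regu}) and is then annihilated by $\mathcal H^{2n-1}(\Om_{u,0})=0$ — but the mechanism in the final step is genuinely different. The paper uses \Cref{reprlemma} together with \eqref{periforw11} to identify $(Du)_s(\tilde\Om)$ with $P_\eps\bigl(E_u,(\tilde\Om\cap\Om_{u,0})\times\rr\bigr)$, then traps $\partial^*E_u\cap(\tilde\Om\times\rr)$ in a bounded strip $\tilde\Om\times[-L,L]$ and estimates this perimeter by $2\tilde C L\,\mathcal H^{2n-1}(\Om_{u,0})=0$. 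You instead invoke the standard measure-theoretic property of $BV$ functions that the derivative does not charge $\mathcal H^{2n-1}$-null Borel sets (jump part absolutely continuous with respect to $\mathcal H^{2n-1}$ restricted to the rectifiable jump set; Cantor part vanishing on sets $\sigma$-finite with respect to $\mathcal H^{2n-1}$), which makes both the representation formula and the height bound $L$ unnecessary for the decisive step; this is arguably cleaner and more elementary, at the price of importing the fine structure theory of $BV$ rather than staying within the geometric-measure framework the paper has already set up. Two minor remarks: your appeal to \Cref{linftyloc} is superfluous (and that proposition is stated for minimizers of $\mathcal I_\eps$ rather than for general $H$-minimizers), since $u\in C^{3,\gamma}_{loc}(\Om\setminus\Om_{u,0})$ already gives local boundedness where you need it; and your concentration step implicitly uses that $\Om\setminus\Om_{u,0}$ is open, which you correctly flag — this is exactly the same (implicit) assumption the paper makes in stating \Cref{prop:regu}, so you are on equal footing there.
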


\begin{proof}
 By \Cref{prop:regu},  $u\in C^{3,\gamma}(\Om\setminus \Om_{u,0})$ and $\mathcal{H}^{2n-1}(\Om_{u,0})=0$, where  $\Om_{u,0}$ is defined in \eqref{def:S}. Let $\tilde Du$ be the distributional derivatives of $u$, and consider the decomposition $\tilde D u=Du\mathcal{L}^{2n}+(Du)_s$ as in \Cref{reprlemma}. It is enough to show that $(Du)_s\equiv 0$. 
 Since in particular $u\in W^{1,1}_{loc}(\Om\setminus\Om_{u,0})$, \eqref{periforw11} combined with \Cref{reprlemma} implies that
    \begin{equation*}\label{decoaprile}
    \begin{split}
        P_\varepsilon(E_u,\tilde \Om\times\rr)&=\int_{\tilde \Om}\sqrt{\varepsilon^2+|Du+X|^2}\,dz+(Du)_s(\tilde \Om)\\
        &=\int_{\tilde \Om\setminus \Om_{u,0}}\sqrt{\varepsilon^2+|Du+X|^2}\,dz+(Du)_s(\tilde \Om)\\
        &=P_\eps(E_u,(\tilde\Om\setminus\Om_{u,0})\times\rr)+(Du)_s(\tilde \Om),
    \end{split}
    \end{equation*}
    so that
    \begin{equation*}
        (Du)_s(\tilde \Om)=P_\eps(E_u,(\tilde \Om\cap \Om_{u,0})\times\rr).
    \end{equation*}
Arguing as in \Cref{linftyloc}, there exists $L>0$ such that $
        \partial^\star E_u\cap(\tilde \Om\times\rr)\subseteq \tilde \Om\times[-L,L].
$   Therefore, exploiting \eqref{ineq:perieq}, there exists $\tilde C>0$ such that
    \begin{multline*}
            (Du)_s(\tilde \Om)
            \leq   
            \tilde C P(E_u,(\tilde \Om\cap \Om_{u,0})\times\rr)= \tilde C \mathcal H^{2n}(\partial ^*E_u\cap(\tilde \Om\cap \Om_{u,0})\times\rr)\\
            \leq 2\tilde C L\mathcal H^{2n-1}(\Om_{u,0}) =0.\qedhere
    \end{multline*}
\end{proof}
\begin{proposition}\label{liploc}
    Let $\Om\subseteq\rr^{2n}$ be a bounded domain with Lipschitz continuous boundary and let $H\in C^{1,\gamma}_{loc}(\Om)$ for some $\gamma\in (0,1)$. 
    Let $u\in BV(\Om)$ be such that $E_u$ is an $H$-minimizer for $P_\eps$ on $\Om\times\rr$. then $u\in\lip_{loc}(\Om)$.
\end{proposition}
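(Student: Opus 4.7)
The plan is to approximate $u$ locally by classical solutions of the Dirichlet problem on small balls, and then transfer the interior gradient bound of \Cref{mainige'} to the limit. I fix $z_0\in\Om$ and choose $r>0$ so small that $\overline{B(z_0,r)}\subset\Om$ and the ball $B:=B(z_0,r)$ simultaneously satisfies the non-extremality condition \eqref{subextremal} and the strict Serrin condition \eqref{Serrinstrict}: the former because $|\int_B H|=O(r^{2n})$ while $P(B)=O(r^{2n-1})$, the latter because $H_{\partial B}\equiv(2n-1)/r\to\infty$ as $r\to 0$ while $H$ is locally bounded. By \Cref{linftyloc}, $M:=\|u\|_{L^\infty(B)}$ is finite, and by \Cref{wunoloc} the trace $\varphi:=u|_{\partial B}\in L^1(\partial B)$ satisfies $|\varphi|\leq M$.

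I would then pick a sequence $(\varphi_k)_k\subseteq C^{2,\alpha}(\partial B)$ with $\|\varphi_k\|_{L^\infty(\partial B)}\leq M$ and $\varphi_k\to\varphi$ in $L^1(\partial B)$, and invoke \Cref{existence*} to produce classical solutions $u_k\in\lip(B)\cap C^{2,\alpha}_{loc}(B)$ of \eqref{pmc'} on $B$ with $u_k=\varphi_k$ on $\partial B$. A comparison with large constant barriers gives a uniform $L^\infty$ bound $\|u_k\|_{L^\infty(B)}\leq M'$ depending only on $M$, $\|H\|_{L^\infty(B)}$, and $r$. For any $B'\Subset B$, \Cref{mainige'} then delivers a uniform gradient bound $\|Du_k\|_{L^\infty(B')}\leq C$ independent of $k$, so by Ascoli--Arzelà a subsequence $u_k\to w$ converges locally uniformly in $B$, with $w\in\lip_{loc}(B)$.

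The remaining and crucial step is to identify $w=u$ almost everywhere in $B$. By \Cref{minimplieslambdamin} each $u_k$ minimizes $\mathcal I_\eps$ on $B$ with boundary datum $\varphi_k$, and the global $H$-minimality of $E_u$ on $\Om\times\rr$ restricts to say that $u$ minimizes $\mathcal I_\eps$ on $B$ with boundary datum $\varphi$. Using the coercivity estimate \eqref{eq:BVbound} (which gives boundedness of $(u_k)_k$ in $BV(B)$) together with the lower semicontinuity of $\mathcal I_\eps$ with respect to $L^1$-convergence and its continuity in the $L^1(\partial B)$ boundary datum, I would argue that the limit $w$ is itself a minimizer of $\mathcal I_\eps$ on $B$ with boundary datum $\varphi$. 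A comparison argument at the level of $H$-minimizing subgraphs, in the spirit of Giusti's original scheme, would then force $w=u$ a.e.\ in $B$, whence the Lipschitz bound passes from the $u_k$ to $u$.

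The main obstacle is precisely this final identification. One must compare the merely $BV$ minimizer $u$ with its smooth approximants $u_k$ at the level of subgraphs in $\Om\times\rr$, either by proving uniqueness of minimizers of $\mathcal I_\eps$ with prescribed $L^1$ boundary datum, exploiting the strict convexity of the area integrand together with the density estimates of \Cref{prop:voldensity} to control the singular part of the measure-derivative, or by obtaining two-sided comparison estimates of the form $u_k-\delta_k\leq u\leq u_k+\delta_k$ a.e.\ in $B$, with $\delta_k\to 0$ depending quantitatively on $\|\varphi_k-\varphi\|_{L^1(\partial B)}$; either route delivers $u\in\lip_{loc}(B)$, and since $z_0$ is arbitrary, the result follows.
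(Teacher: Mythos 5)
Your approximation scheme — a small ball on which \eqref{subextremal} and \eqref{Serrinstrict} hold, classical solutions from \Cref{existence*} with smooth boundary data, uniform interior gradient bounds from \Cref{mainige'}, and Ascoli--Arzel\`a — is exactly the paper's. The gap is in the identification $w=u$, which you correctly flag as the main obstacle but do not close, and neither of the two routes you sketch works as stated. First, taking $\varphi_k\to\varphi$ only in $L^1(\partial B)$ gives no control on the trace of the locally uniform limit $w$: boundary layers can form, so at best $w$ is a minimizer of the penalized functional, not a minimizer attaining the trace $\varphi$. Second, uniqueness of $BV$ minimizers of the penalized functional does not follow from strict convexity of the area integrand: by \Cref{reprlemma} the singular part of $Du$ enters the relaxed functional linearly, and the boundary penalization is likewise only convex, so two minimizers could a priori differ by a nonconstant function with purely singular derivative.

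The paper closes this step with a different, and essential, ingredient: the boundary regularity of $u$ itself. By \Cref{prop:regu}, $u$ is of class $C^{3,\gamma}$ off a set $\Om_{u,0}$ with $\mathcal{H}^{2n-1}(\Om_{u,0})=0$; one chooses open sets $\Om_k\supseteq\Om_{u,0}$ with $\mathcal{H}^{2n-1}(\Om_k\cap\partial B_r)\to 0$ and data $\varphi_k\in C^{2,\gamma}(\overline{B_r})$ with $\varphi_k\equiv u$ on $\partial B_r\setminus\Om_k$ (and $\sup|\varphi_k|\leq 2\sup|u|$). At each regular boundary point $y$ one sandwiches $\varphi_k$ between smooth data $\varphi^\pm$ equal to $u$ near $y$, and the comparison principle applied to the corresponding solutions $v^\pm$ forces the limit $v$ to equal $u$ at every regular point of $\partial B_r$; since $\mathcal{H}^{2n-1}(\Om_{u,0})=0$, $v$ and $u$ have the same trace. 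Finally, both $u$ and $v$ lie in $W^{1,1}(B_r)$ (by \Cref{wunoloc} and by the Lipschitz bound plus lower semicontinuity, respectively), so they minimize the same functional over $W^{1,1}(B_r)$ with equal traces; there the singular part is absent, strict convexity of $p\mapsto\sqrt{\eps^2+|p+X|^2}$ gives $Du=Dv$ a.e., and $u-v\in W^{1,1}_0(B_r)$ forces $u=v$. You would need to supply these two ingredients — the trace identification via barriers at regular boundary points, and the restriction of the uniqueness argument to $W^{1,1}$ — to complete your proof.
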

\begin{proof}
    Fix $z_0\in\Om$. Let $r_0>0$ small enough to ensure that $B_{r_0}=B(z_0,r_0)\Subset\Om$. We claim that there exists $r\in(r,r_0)$ such that \eqref{Giusti}, \eqref{subextremal} and \eqref{Serrinstrict} hold for $B_r=B(z_0,r)$. Indeed, setting $H_0=\|H\|_{L^\infty(B_{r_0})}$, then
    \begin{equation*}
        \left|\int_A H\,dz\right|\leq H_0|A|
    \end{equation*}
    for any $r\in (0,r_0)$ and any measurable set $A\subseteq B_r$. Therefore, recalling that
    \begin{equation*}
       h(B_r)=\frac{P(B_r)}{|B_r|}=\frac{1}{r}\frac{r_0P(B_{r_0})}{|B_{r_0}|}
    \end{equation*}
    (cf. \cite{MR3467379}), the claim concerning \eqref{Giusti} and \eqref{subextremal} follows by taking 
    $
    r< \frac{r_0P(B_{r_0})}{H_0|B_{r_0}|}
    $.
    Regarding \eqref{Serrinstrict}, it suffices to see that $H_{\partial B_r}=\frac{2n-1}{r}$, so that $H_{\partial B_r}$ can be made arbitrarily big as $r$ becomes small.
Given $w\in W^{1,1}(B_r)$, we consider the extension of $w$ to a function $\tilde w\in BV(\Om)$ by letting $\tilde w\equiv u$ on $\Om\setminus\overline{B_r}$. In particular, $u$ and $\tilde w$ share the same trace on $\partial \Om$. Using that $u\in W^{1,1}(\Om)$ is such that $E_u$ is an $H$-minimizer for $P_\eps$ on $\Om\times\rr$ and \eqref{trace}, we infer that $u|_{B_r}$ minimizes the functional
\begin{equation}\label{minisobdur}
I(w)= \int_{B_r}\sqrt{\eps^2+|Dw+X|^2}\,dz+\int_{B_r}Hwdz+\int_{\ptl B_r}|u-w|d\mathcal{H}^{2n-1}
\end{equation}
in $W^{1,1}(B_r)$. 
    Since $\mathcal{H}^{2n-1}(\Om_{u,0})=0$ by \Cref{prop:regu},
we can take a sequence of open sets  $( \Om_k)_k$ such that $ \Om_{u,0}\subseteq  \Om_{k+1}\subseteq  \Om_k\subseteq \Om $ for any $k\in\mathbb N$, $ \Om_{u,0}=\bigcap_{k\in\mathbb N} \Om_k$ and $\mathcal H^{2n-1}( \Om_k\cap\partial B_r)\to 0$ as $k\to\infty$.
For any $k\in\mathbb N$, let $\varphi_k\in C^{2,\gamma}(\overline{B_r})$ be such that $\varphi_k\equiv u$ on $\partial B_r\setminus  \Om_k$ and
\begin{equation}\label{unistimafi}
        \sup_{\partial B_r}|\varphi_k|\leq 2\sup_{\partial B_r}|u|.
    \end{equation}
We apply \Cref{existence*} to get a classical solution $v_k\in C^{2}(\overline{B_r})$ to \eqref{pmc'} such that $v_k\equiv\varphi_k$ on $\partial B_r$ for any $k\in\mathbb N$. In particular, being $\mathcal{I_\varepsilon}$ convex in $W^{1,1}(B_r)$, we infer that $v_k$ minimizes the functional
\begin{equation}\label{minisob}
I_k(w)= \int_{B_r}\sqrt{\eps^2+|Dw+X|^2}\,dz+\int_{B_r}Hwdz+\int_{\ptl B_r}|\varphi_k-w|d\mathcal{H}^{2n-1}
\end{equation}
in $ W^{1,1}(B_r)$.
Thanks to \Cref{lem:nonop} we can apply \cite[Proposition 4.4]{pmc1} which, together with \eqref{unistimafi}, implies that $(v_k)_k$ is uniformly bounded in $L^{\infty}(B_r)$. By \Cref{uisc2},  $v_k\in C^{3,\ga}_{loc}(B_r)$, and by \Cref{mainige'} the sequence $(v_k)_k$ is locally uniformly bounded in $\lip(B_r)$. By Ascoli-Arzelà Theorem, there exists $v\in \lip_{loc}(B_r)$ such that, up to a subsequence, $v_k\to v$ locally uniformly on $B_r$. Since $(v_k)_k$ is uniformly bounded in $L^\infty(B_r)$ and by the lower semicontinuity properties of \Cref{equifun}, we can pass to the limit in \eqref{minisob} with $w\equiv 0$ to infer that $v\in W^{1,1}(B_r)$. 
Let us check that $v$ is a minimum of $I$ in $W^{1,1}(B_r)$. Let $y\in\ptl B_r$ be a regular point for $u$, $k$ big enough so that $y\in \ptl B\setminus \Om_k$ and $V$ a neighborhood of $y$ in $\ptl B_r$. Let $\varphi^\pm\in C^{2,\gamma}(\overline{B_r})$ be such that
\begin{equation*}
\varphi^\pm= u \text{ in }V\qquad\text{and}\qquad
\varphi^-\leq \varphi_k\leq\varphi^+\text{ in }\ptl B_r,
\end{equation*}
and let $v^\pm$ be the solutions to the Dirichlet problem in $B_r$ with boundary datum $\varphi^\pm$. Then, using the maximum principle \cite[Theorem 10.7]{GT}, we get $v^-\leq v_k\leq v^+$,
and $v=u$ at regular points of $u$, which together with $\mathcal H^{2n-1}(\Om_{u,0})$ implies that $u$ and $v$ have the same trace on $\partial B_r$. 
Moreover, by the lower semicontinuity, 
the local uniform convergence of $v_h\to v$ and that $\varphi_k\to u$ in $L^1(\ptl B_r)$, we get
\begin{equation*}
I(v)\leq \liminf_{k\to+\infty}I_k(v_k)\leq \liminf_{k\to+\infty}I_k(w)=I(w)
\end{equation*}
for any $w\in W^{1,1}(B_r)$. Therefore, $u$ and $v$ are minimizers in $W^{1,1}(B_r)$ of the functional $I$. Recalling that $u-v\in W^{1,1}_0(B_r)$, the conclusion easily follows by the strict convexity of the functional.
\end{proof}

As a direct consequence of \Cref{existenceeps}, \Cref{uisc2} and \Cref{liploc}, we have the following result.

\begin{theorem}\label{fullregularity}
    Let $\Om\subseteq\rr^{2n}$ be a bounded domain with Lipschitz continuous boundary and let $H\in C^{1,\gamma}_{loc}(\Om)\cap L^\infty(\Om)$ for some $\gamma\in (0,1)$. Assume that \eqref{Giusti} and \eqref{subextremal} holds. Then there exists $u\in C_{loc}^{3,\gamma}(\Om)\cap W^{1,1}(\Om)$ which minimizes $\mathcal{I_\varepsilon}$ and solves \eqref{pmc'}. Moreover, if $H\in C^\infty(\Om)$ then $u\in C^\infty(\Om)$.
\end{theorem}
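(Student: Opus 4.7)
The plan is to assemble the statement as a direct corollary of the chain of results established earlier in this section. First, since $H\in L^\infty(\Om)$ and both \eqref{Giusti} and \eqref{subextremal} hold, \Cref{existenceeps} (applied for instance with boundary datum $\varphi\equiv 0$) yields a minimizer $u\in BV(\Om)$ of $\mathcal{I}_\varepsilon$. By \Cref{minimplieslambdamin}, the subgraph $E_u$ is then an $H$-minimizer for $P_\eps$ on $\Om\times\rr$ in the sense of \Cref{hmindeff}.

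Next, since $H\in C^{1,\gamma}_{loc}(\Om)$, \Cref{wunoloc} applies and gives $u\in W^{1,1}(\Om)$, while \Cref{liploc} upgrades this to $u\in \lip_{loc}(\Om)$. The only step that I would spell out carefully is the passage from being an $H$-minimizer to being a weak solution of \eqref{pmc'}: once $u\in W^{1,1}_{loc}(\Om)\cap\lip_{loc}(\Om)$, the representation in \Cref{reprlemma} shows that for any open $\tilde\Om\Subset\Om$ and any $\phi\in C^\infty_c(\tilde\Om)$ the functional
\[
s\longmapsto \int_{\tilde\Om}\sqrt{\eps^2+|D(u+s\phi)+X|^2}\,dz+\int_{\tilde\Om}H(u+s\phi)\,dz
\]
is differentiable at $s=0$, and its vanishing derivative is exactly the weak form of \eqref{pmc'}. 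This is the place where local Lipschitz regularity is essential, because it guarantees that the integrand is smooth and that the singular part of the distributional derivative is absent on a neighborhood of $\supp\phi$.

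Having obtained that $u\in\lip_{loc}(\Om)$ is a weak solution of \eqref{pmc'} with $H\in C^{1,\gamma}_{loc}(\Om)$, \Cref{uisc2} applied with $k=1$ yields $u\in C^{3,\gamma}_{loc}(\Om)$ and that $u$ solves \eqref{pmc'} classically. The final assertion about $C^\infty$ regularity when $H\in C^\infty(\Om)$ is the corresponding statement in \Cref{uisc2}. I expect no real obstacle here: the whole proof is an orchestration of results already in place, and the only substantive observation, namely that a locally Lipschitz $H$-minimizer in the sense of \eqref{limminloc2esp} is a weak solution of \eqref{pmc'}, is immediate from the chain rule together with \Cref{reprlemma} once the $W^{1,1}$ and Lipschitz regularity are in hand.
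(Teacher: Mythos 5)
Your proposal is correct and follows essentially the same route as the paper, which presents this theorem as a direct consequence of \Cref{existenceeps}, \Cref{liploc} and \Cref{uisc2} (with \Cref{minimplieslambdamin} and \Cref{wunoloc} supplying the intermediate hypotheses, exactly as you use them). Your explicit first-variation argument showing that a locally Lipschitz $H$-minimizer is a weak solution of \eqref{pmc'} is the standard step the paper leaves implicit, and it is carried out correctly.
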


\subsection{Existence of minimizers: the extremal case}\label{extremalcases}
Throughout this subsection we fix $H\in \lip(\Om)\cap C^{1,\gamma}_{loc}(\Om)$ for some $\gamma\in(0,1)$.
To deal with solutions to \eqref{pmc'} in the extremal case \eqref{Giusti2}, we follow the approach of \cite{MR487722}. To this aim, we generalize the notion of $H$-minimizer as in \eqref{limminloc2esp} admitting merely measurable functions. 
\begin{definition}\label{genhsoldefeps}
    A measurable function $u:\Om\longrightarrow[-\infty,+\infty]$ is a \emph{generalized $H$-minimizer} for $P_\eps$ on $\Om\times\rr$ if \eqref{epsminmaggio} holds.
\end{definition}
According to the notation introduced in \cite{MR0775682}, if $u:\Om\longrightarrow[-\infty,+\infty]$ is a measurable function we set
\begin{equation}\label{npiunmeno}
    N_+=\{z\in\Om\,:\,u(z)=+\infty\}\qquad\text{and}\qquad N_-=\{z\in\Om\,:\,u(z)=-\infty\}.
\end{equation}
As in the Euclidean setting, we have the following minimization property.
\begin{proposition}\label{neminimoeps}
    Let $u$ be a generalized $H$-minimizer. Then 
    \begin{equation}\label{minimobassoeps}
        P(N_{\pm},A)\pm\int_{N_\pm\cap A}H\,dz\leq P(F,A)\pm\int_{F\cap A}H\,dz
    \end{equation}
    for any open set $A\Subset\Om$ and any measurable set $F\subseteq\Om$ such that $N_+\Delta F\Subset A$.
\end{proposition}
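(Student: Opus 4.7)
The plan is to combine a vertical-translation invariance of generalized $H$-minimizers with a cylindrical-slicing argument, in the spirit of Miranda.

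\emph{Step 1: translates of $E_u$ are still minimizers.} The vertical translation $(z,s)\mapsto(z,s-t)$ coincides with the left translation by $(0,0,-t)\in\hh^n$, and is therefore an isometry of the left-invariant metric $g_\eps$; in particular $P_\eps$ is invariant under it. Since $H(z,s)=H(z)$, the bulk integral is invariant as well. Because $(0,0,-t)\cdot E_u=E_{u-t}$, a direct verification (transporting competitors back to $E_u$ via translation) shows that $E_{u-t}$ is a generalized $H$-minimizer for every $t\in\rr$.

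\emph{Step 2: $N_\pm\times\rr$ are minimizers in $\Om\times\rr$.} A pointwise computation yields
\begin{equation*}
\chi_{E_{u-t}}\longrightarrow\chi_{N_+\times\rr}\quad\text{and}\quad\chi_{(\Om\times\rr)\setminus E_{u+t}}\longrightarrow\chi_{N_-\times\rr}\qquad\text{as }t\to+\infty,
\end{equation*}
and by dominated convergence both convergences hold in $L^1_{loc}(\Om\times\rr)$. Using $H\in L^\infty_{loc}(\Om)$, the standard stability of local perimeter-type minimizers under $L^1_{loc}$-convergence (lower semicontinuity of $P_\eps$ combined with the usual collar/interpolation construction of admissible competitors) transfers generalized $H$-minimality to $N_+\times\rr$. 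Since the complement of an $H$-minimizer is a $(-H)$-minimizer (swap $E\leftrightarrow E^c$ and use $\int_{E^c\cap A}H=\int_A H-\int_{E\cap A}H$), it follows as well that $N_-\times\rr$ is a generalized $(-H)$-minimizer.

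\emph{Step 3: slicing to the plane.} Fix an open set $A\Subset\Om$ and a measurable $F\subseteq\Om$ with $F\Delta N_+\Subset A$, and for $\tau>0$ set
\begin{equation*}
W_\tau:=\bigl((F\cap A)\times(-\tau,\tau)\bigr)\cup\bigl((N_+\times\rr)\setminus(A\times(-\tau,\tau))\bigr).
\end{equation*}
Then $W_\tau\Delta(N_+\times\rr)=(F\Delta N_+)\times(-\tau,\tau)\Subset A\times(-\tau,\tau)$, so $W_\tau$ is an admissible competitor. Inside the open cylinder $A\times(-\tau,\tau)$ both $W_\tau$ and $N_+\times\rr$ are locally vertical, hence by \eqref{trace} and Fubini
\begin{equation*}
P_\eps\bigl(W_\tau;A\times(-\tau,\tau)\bigr)=2\tau P(F,A),\qquad P_\eps\bigl(N_+\times\rr;A\times(-\tau,\tau)\bigr)=2\tau P(N_+,A),
\end{equation*}
while the bulk terms reduce to $2\tau\int_{F\cap A}H\,dz$ and $2\tau\int_{N_+\cap A}H\,dz$ respectively. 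Applying the $H$-minimality of $N_+\times\rr$ produced in Step 2 and dividing by $2\tau$ yields \eqref{minimobassoeps} with the $+$ sign. The identical construction, applied to the $(-H)$-minimizer $N_-\times\rr$, delivers the $-$ sign inequality for $N_-$.

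\emph{Main obstacle.} The only genuinely delicate point is the stability claim in Step 2: a competitor for the limit set must be converted into a sequence of admissible competitors for the approximants $E_{u\pm t}$, which requires the standard collar/interpolation argument to control the perimeter added in the transition region (the bulk term is automatically continuous under $L^1_{loc}$-convergence since $H\in L^\infty_{loc}$). Once stability is granted, Steps 1 and 3 reduce to product computations via \eqref{trace} and left-invariance of $g_\eps$.
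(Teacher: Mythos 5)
Your overall strategy --- translate $E_u$ downwards, pass to the limit to conclude that $N_+\times\rr$ is itself a generalized $H$-minimizer, and then compare with cylindrical competitors using \eqref{trace} --- is the same as the paper's. Steps 1 and 2 are sound modulo the stability-under-$L^1_{loc}$-convergence lemma, which the paper also invokes at the same point (and proves in detail in the compactness argument of \Cref{hcompthm}); your reduction of $N_-$ to $N_+$ via complementation is a legitimate variant of the paper's ``analogous'' case.

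The gap is in Step 3. Your competitor $W_\tau$ is not admissible for the open set $A\times(-\tau,\tau)$: the symmetric difference $W_\tau\Delta(N_+\times\rr)=\bigl((F\Delta N_+)\cap A\bigr)\times(-\tau,\tau)$ has closure containing $\overline{F\Delta N_+}\times\{-\tau,\tau\}$, so it is \emph{not} compactly contained in $A\times(-\tau,\tau)$. The compact containment in \Cref{hmindeff} is not a technicality: without it the minimality inequality fails even for genuine minimizers, because the competitor can hide perimeter on the boundary of the test open set --- which is exactly what $W_\tau$ does. The transition from $F$ to $N_+$ at heights $\pm\tau$ produces two horizontal ``caps'' $\bigl((F\Delta N_+)\cap A\bigr)\times\{\pm\tau\}$ that your exact identities never charge; this is why your computation (misleadingly) closes for every fixed $\tau$. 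To repair the step you must test minimality in a strictly taller cylinder, say $A\times(-2\tau,2\tau)$, where $W_\tau\Delta(N_+\times\rr)$ is genuinely compactly contained. There the caps contribute the $\tau$-independent quantity $2\int_{F\Delta N_+}\sqrt{\eps^2+|X|^2}\,dz$ (the $\eps$-perimeter density of a horizontal face is $|C_\eps^T(0,\dots,0,1)|=\sqrt{\eps^2+|z|^2}$ by \eqref{base}), so after dividing by $2\tau$ you only get
\begin{equation*}
P(N_+,A)+\int_{N_+\cap A}H\,dz\;\leq\;P(F,A)+\int_{F\cap A}H\,dz+\frac{C}{\tau},
\end{equation*}
and you must let $\tau\to\infty$ to conclude. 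This is precisely the device in the paper's proof, phrased there as a contradiction with a fixed deficit $\delta$ and the choice $L>\frac{2}{\delta}\int_\Om\sqrt{\eps^2+|X|^2}\,dz$.
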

\begin{proof}
  Let us check this property for $N_+$, being the case $N_-$ analogous. For any $j\in\mathbb N$, set $u_j=u-j$. Then $u_j$ converges almost everywhere to 
    \begin{equation*}
v(z)=
\displaystyle{\begin{cases}
+\infty&\text{ if }z\in N_+\\
-\infty&\text{ if }z\notin N_+.
\end{cases}
}
\end{equation*}
Arguing \emph{verbatim} as in \cite[Section 21.5]{MR2976521}. (cf. \Cref{applisub} for the proof of a similar result), $E_v$ is an $H$-minimizer as in \eqref{epsminmaggio}. Assume by contradiction that there exists an open set $A\Subset\Om$, a measurable set $F$ such that $N_+\Delta F\Subset A$ and $\delta>0$ such that
\begin{equation*}
      P(F;A)+\int_{F\cap A}H\,dz\leq P(N_+;A)+\int_{N_+\cap A}H\,dz-\delta.
\end{equation*}
Given $L>0$, we set $A_L=A\times [-L,L]$ and $A_{2L}=A\times(-2L,2L)$, and 
    \begin{equation*}
F_L=
\displaystyle{\begin{cases}
F\times\rr&\text{ in }A_L\\
N_+\times\rr&\text{ otherwise.}
\end{cases}
}
\end{equation*}
Then, using \eqref{trace}, we have
\begin{equation*}
\begin{split}
    P_\eps(F_L;A_{2L})+\int_{ F_L\cap A_{2L}}H\,dx&\leq 4\int_\Om\sqrt{\eps^2+|X|^2}\,dz+ 2LP(N_+,A)+2LP(F;A)\\
    &\quad+2L\int_{N_+\cap A}H\,dz+2L\int_{F\cap A}H\,dz\\
    &\leq 4\int_\Om\sqrt{\eps^2+|X|^2}\,dz+4LP(N_+;A)+4L\int_{F_+\cap A}H\,dz-2L\delta\\
    &=P_\eps(E_v;A_{2L})+\int_{E_v\cap A_{2L}}H\,dx+4\int_\Om\sqrt{\eps^2+|X|^2}\,dz-2L\delta\\
    &<P_\eps(E_v;A_{2L})+\int_{E_v\cap A_{2L}}H\,dx,
    \end{split}
\end{equation*}
where the last strict inequality follows provided that $L>\frac{2}{\delta}\int_\Om\sqrt{\eps^2+|X|^2}\,dz$. Being $E_v$ an $H$-minimizer, a contradiction follows.
\end{proof}
\begin{proof}[Proof of \Cref{thm:1}]
    Let us assume \eqref{Giusti2}, since otherwise the thesis follows by \Cref{fullregularity}. Let $(\varepsilon_j)_j\subseteq(0,1)$ be such that $\varepsilon_j\searrow 0$ as $j\to\infty$. Let $(\Om_j)_j$ be a sequence of open domains with Lipschitz boundary such that $ \Om_j\Subset\Om_{k}\Subset\Om$  for any $j<k$m $\bigcup_{j=0}^\infty\Om_j=\Om$  and $P(\Om_j)\to P(\Om)$ as $j\to\infty$ (cf. \cite{MR3314116}).
    By hypothesis
      \begin{equation*}
          \Big|\int_{\Om_j}H\Big|<P(\Om_j)
      \end{equation*}
      for any $j\in\mathbb N$.
Hence, for any fixed boundary datum $\varphi_j\in L^1(\partial\Om_j)$, \Cref{existenceeps} implies the existence of $u_j\in BV(\Om_j)$ which minimizes $\mathcal I_\eps$ on $\Om_j$. 
      In view of \Cref{minimplieslambdamin}, $E_{u_j}$ is an $H_j$-minimizer for $P_{\varepsilon_j}$ on $\Om_j\times\rr$. Arguing as in \cite{MR487722,MR3767675}, up to vertical translations we assume that
      \begin{equation}\label{translatedhpeps}
          \min\{|\{z\in\Om_j\,:\,u_j(z)\leq 0\}|,\{z\in\Om_j\,:\,u_j(z)\geq 0\}|\}\geq \frac{|\Om|}{4}
      \end{equation}
      for any $j\in\mathbb N$. 
   Applying again a compactness argument as in \cite[Section 21.5]{MR2976521} and \Cref{applisub}, there exists a generalized $H$-minimizer $u$ for $P_{\eps}$ on $\Om\times\rr$ as in \Cref{genhsoldefeps} such that $u_j\to u$ almost everywhere on $\Om$. We are left to show that $u\in L^\infty_{loc}(\Om)$. 
   Indeed, in this case, since $E_u$ is a Caccioppoli set, \Cref{equifun} would imply that $u\in BV_{loc}(\Om)$, whence \Cref{liploc} and \Cref{uisc2} guarantee the requested regularity. To show that $u\in L^\infty_{loc}(\Om)$, we let $N_+$ and $N_-$ be as in \eqref{npiunmeno}. We claim that $|N_+|=|N_-|=0$. In this case, arguing \emph{verbatim} as in \cite[Proposition 16.7]{Giusti}, volume density estimates as in \Cref{prop:voldensity} allow to conclude that $u\in L^\infty_{loc}(\Om)$. We prove that $|N_+|=0$, being the other case analogous. To this aim, we apply \Cref{neminimoeps} to infer that $N_+$ is a minimizer as in \eqref{minimobassoeps}. But then, \eqref{Giusti2} allows to apply \cite[Lemma 1.2]{MR487722}, whence either $|N_+|=0$ or $|N_-|=|\Om|$. Being the latter possibility in contradiction with \eqref{translatedhpeps}, we conclude that $|N_+|=0$, from which the thesis follows.
\end{proof}

\section{Essential uniqueness of solutions}\label{sec:uniqueness}
Similarly to what happens in the Euclidean setting (cf. \cite{MR487722}), the extremal case \eqref{Giusti2}
describes those  maximal configurations $\Om$ for which \eqref{pmc'} admits a classical solution. Moreover, in these cases, solutions are unique up to vertical translations.
\begin{proof}[Proof of \Cref{thm:uniqueness}]
    The equivalence between (i) and (ii), thanks to \Cref{thm:1}, follows word-by-word the proof of \cite[Proposition 2.2]{MR487722}. To prove that (i) is equivalent to (iii) it suffices to notice that, if $u\in C^2(\Om)$ solves \eqref{pmc'} in $\Om$, then
    \begin{equation*}
        \int_{\Om_t}H\,dz=\int_{\Om_t}\divv\left(\frac{Du+X}{\sqrt{\eps^2+|Du+X|^2}}\right)\,dz=\int_{\partial \Om_t}\frac{\langle\nu_t,Du+X\rangle}{\sqrt{\eps^2+|Du+X|^2}}\,d\mathcal{H}^{2n-1},
    \end{equation*}
    so that
     \begin{equation*}
        \int_{\Om}H\,dz=\lim_{t\to 0^+}\int_{\partial \Om_t}\frac{\langle\nu_t,Du+X\rangle}{\sqrt{\eps^2+|Du+X|^2}}\,d\mathcal{H}^{2n-1}.
    \end{equation*}
    To show uniqueness, assume that $u,v\in C^2(\Om)$ satisfy \eqref{pmc'} in $\Om$. Up to changing the sign of $u$ and $v$, we can assume that $\int_\Om H\,dz\geq 0$. Being \eqref{pmc'} invariant under vertical translations, we fix $z_0\in\Om$ and we assume that $v(z_0)=u(z_0)$. 
    In order to simplify the notation, we let 
    \begin{equation*}
        W_\eps u=\frac{Du+X}{\sqrt{\eps^2+|Du+X|^2}},
    \end{equation*}
    and $W_\eps v$ accordingly. Notice in particular that $|W_\eps u|,|W_\eps v|\leq 1$.
    Notice that, for any $\varphi\in \lip(\Om)$ such that $\varphi\geq 0$ on $\Om$, and for any sufficiently small $t\in(0,1)$, it holds that
    \begin{equation*}
       \begin{split}
           \int_{\Om_t}\langle W_\eps u-W_\eps v,D\varphi\rangle\,dz=\int_{\partial\Om_t}\varphi\left(\langle \nu_t,W_\eps u\rangle-\langle\nu_t, W_\eps v\rangle\right)\,dz\geq \int_{\partial\Om_t}\varphi\left(\langle \nu_t,W_\eps u\rangle-1\right)\,dz.
       \end{split} 
    \end{equation*}
    For any $k>0$, let $\varphi_k=\max\{0,\min\{v-u,k\}\}$. Then $\varphi_k\in\lip(\Om)$, $\varphi_k\geq 0$ on $\Om$ and $\langle W_\eps u-W_\eps v,D\varphi_k\rangle\leq 0$ on $\Om$, so that
    \begin{equation*}
        0\geq  \int_{\Om_t}\langle W_\eps u-W_\eps v,D\varphi_k\rangle\,dz\geq-k\left(P(\Om_t)-\int_{\partial\Om_t}\langle \nu_t,W_\eps u\rangle\,dz\right).
    \end{equation*}
    Therefore, in view of (iii), we let first $t\to 0^+$ and then $k\to\infty$ to obtain
    \begin{equation}\label{intdisuguniq}
        \int_{\Om}\langle W_\eps u-W_\eps v,D\varphi\rangle\,dz=0,
    \end{equation}
    where $\varphi:=\max\{0,v-u\}$ verifies again $\langle W_\eps u-W_\eps v,D\varphi\rangle\leq 0$ on $\Om$. Hence \eqref{intdisuguniq} implies that $\langle W_\eps u-W_\eps v,D\varphi\rangle =0$ on $\Om$. In view of the definition of $\varphi$, a simple computation shows that $D\varphi=0$ on $\Om$, so that $\varphi(z)=\varphi(z_0)=0$ for any $z\in\Om$. Hence we conclude that $u\equiv v$ on $\Om$.
\end{proof}
\section{Some applications to the sub-Riemannian Heisenberg group}\label{applisub}
\subsection{Existence of local minimizers via Riemannian approximation}
Throughout this subsection, we fix a bounded domain $\Om\subseteq\rr^{2n}$ with Lipschitz boundary and $H\in L^\infty_{loc}(\Om)$. It is well-known (cf. e.g. \cite{MR2262784,pmc1}) that \eqref{pmc'} arises naturally as an elliptic approximation of the \emph{sub-Riemannian prescribed mean curvature equation} \eqref{pmchor}.
In this section we provide existence of solutions to \eqref{pmchor} in a broad sense by means of our previous results coupled with the aforementioned Riemannian approximation scheme (cf. e.g. \cite{MR0748865,MR2043961,MR2262784,MR2312336} for further insights). To describe our approach, assume that $u\in W^{1,1}(\Om)$ is a weak solution to \eqref{pmchor}. A standard variational argument, together with \cite[Theorem 3.2]{MR3276118} shows that
\begin{equation}\label{limminloc2}
\int_{\tilde\Om}|Du+X|+\int_{\tilde\Om}Hu\,dz\leq \int_{\tilde\Om}|Dv+X|+\int_{\tilde\Om}Hv\,dz
    \end{equation}
for any open set $\tilde\Om\Subset\Om$ and any $v\in BV_{loc}(\Om)$ such that $\{u\neq v\}\Subset\tilde\Om$, where, following \cite{MR3276118}, we have set 
\begin{equation}\label{oriperisubgr}
\int_{\tilde\Om}|Dv+X|:=P_\hh(E_v,\tilde\Om\times\rr)
\end{equation}
for any open set $\tilde\Om\Subset\Om$ and any $v\in BV_{loc}(\Om)$. A function $u\in BV_{loc}(\Om)$ satisfying \eqref{limminloc2} is called a \emph{$H$-minimizer for $P_\hh$}. This definition is motivated by the fact that, arguing as in the proof of \Cref{minimplieslambdamin} (cf. also \cite[Theorem 3.15]{MR3276118} and \cite[Corollary 3.16]{MR3276118}), the subgraph $E_u$ of an $H$-minimizer satisfies
\begin{equation}\label{horimin}
        P_\hh (E_u,A)+\int_{E_u\cap A}H\,dz\leq P_\hh(F,A)+\int_{F\cap A}H\,dz
    \end{equation}
    for any open set $A\Subset\Om\times\rr$ and any measurable set $F$ such that $E\Delta F\Subset A$. On the other hand, a truncation argument as in \cite[Theorem 14.8]{MR0775682} implies that if $E_u$ satisfies \eqref{horimin}, then $u$ is an $H$-minimizer for $P_\hh$. 
We stress that, in light of \cite[Theorem 1.2]{MR3276118}, the sub-Riemannian area functional in \eqref{oriperisubgr} is finite. 
Therefore, for any $\varphi\in L^1(\ptl\Om)$, we define the functional $\mathcal{I}_\hh:BV(\Om)\to\rr$ by
\[
\mathcal{I}_\hh(v)=P_\hh(E_v,\Om\times\rr)+\int_\Om Hu dz+\int_{\ptl\Om}|v-\varphi|d\mathcal{H}^{2n-1}.
\]
\begin{proposition}\label{nonextremalhoriz} Let $H\in L^\infty(\Om)$ and assume that \eqref{Giusti} and \eqref{subextremal} hold. Then $\mathcal{I}_\hh$
 has a minimum in $BV(\Om)\cap L^\infty_{loc}(\Om)$ for every $\varphi\in L^1(\partial\Om)$.  
\end{proposition}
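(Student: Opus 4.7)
The plan is to adapt the direct method of \Cref{existenceeps} to the horizontal setting. Fix a ball $B\subseteq\rr^{2n}$ containing $\Om$ with $d(\partial B,\partial\Om)>0$, extend $H$ by zero outside $\Om$, and choose $\phi\in W_0^{1,1}(B)$ with trace $\varphi$ on $\partial\Om$. As in the proof of \Cref{existenceeps}, minimizing $\mathcal{I}_\hh$ over $BV(\Om)$ is equivalent to minimizing
\[
\mathcal{J}_\hh(v)=P_\hh(E_v,\Om\times\rr)+Var(v,B\setminus\Om)+\int_B Hv\,dz
\]
over $K=\{v\in BV(B):v=\phi\text{ in }B\setminus\Om\}$. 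I would establish the horizontal analogue of \eqref{in:pergraph}, namely $P_\hh(E_v,\Om\times\rr)\geq Var(v,\Om)-\int_\Om |X|\,dz$. This follows from \cite[Theorem 3.2]{MR3276118} (which identifies $P_\hh(E_v,\Om\times\rr)$ with the $L^1$-relaxation of $\int_\Om |Dv+X|\,dz$) combined with the pointwise bound $|Dv_k+X|\geq |Dv_k|-|X|$ for smooth approximations $v_k\to v$ and the $L^1$-lower semicontinuity of total variation, exactly as in the proof of \eqref{in:pergraph}.

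Since the Euclidean lower bound \eqref{in:intH} on $\int_B Hv\,dz$ relies only on \Cref{lem:nonop}, the layer-cake and coarea formulas, it transfers verbatim to the present setting. Combining these two estimates as in \eqref{eq:BVbound} yields $\mathcal{J}_\hh(v)\geq \delta\,Var(v,B)-C$ with $C=C(\Om,\varphi,X)$, so every minimizing sequence $(v_k)_k\subseteq K$ is bounded in $BV(B)$. By BV compactness, up to a subsequence $v_k\to v_0$ in $L^1(B)$ for some $v_0\in K$ (recall that $K$ is closed under $L^1$ convergence). Lower semicontinuity of $P_\hh$ with respect to $L^1_{\rm loc}$ convergence (either invoking \eqref{doublelsc} with a constant $\eps_j\equiv 1$, or directly from the dual definition of $P_\hh$ as a supremum) together with continuity of the terms involving $H$ and the trace yields $\mathcal{J}_\hh(v_0)\leq\liminf_k\mathcal{J}_\hh(v_k)$, so $v_0$ minimizes $\mathcal{J}_\hh$ and $u_0:=v_0|_\Om$ minimizes $\mathcal{I}_\hh$ in $BV(\Om)$.

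For the $L^\infty_{loc}$ regularity, I would first observe that $u_0$ satisfies \eqref{limminloc2}: indeed if $\tilde\Om\Subset\Om$ and $w\in BV_{loc}(\Om)$ with $\{u_0\neq w\}\Subset\tilde\Om$, then the boundary trace term in $\mathcal{I}_\hh$ and the $Var(\cdot,B\setminus\Om)$ term in $\mathcal{J}_\hh$ are unchanged upon replacing $u_0$ by $w$. By the sub-Riemannian counterpart of \Cref{minimplieslambdamin} (established in \cite[Theorem 3.15, Corollary 3.16]{MR3276118} as noted in the paper), $E_{u_0}$ then satisfies \eqref{horimin}, hence is an almost-minimizer of $P_\hh$ in the sense of \eqref{quasiminimizerdef} with $P_\eps$ replaced by $P_\hh$. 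Volume density estimates for such almost-minimizers of the horizontal perimeter are by now classical (see the references cited after \Cref{prop:voldensity}, together with \cite{MR1871966}), and these yield $u_0\in L^\infty_{loc}(\Om)$ by the argument used to prove \Cref{linftyloc} and \cite[Theorem 14.10]{MR0775682}.

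The main obstacle is the careful treatment of the density estimates in the purely sub-Riemannian setting, where one must work with Carnot--Carathéodory balls rather than Euclidean balls and verify uniformity of the constants; these results however are available in the literature and transfer without essential change, while everything else is a direct rewriting of \Cref{existenceeps}.
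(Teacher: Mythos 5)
Your proof is correct, but it takes a genuinely different route from the paper's for the existence step. You run the direct method on $\mathcal{J}_\hh$ itself: coercivity via the horizontal analogue of \eqref{in:pergraph} (which indeed follows from \cite[Theorem 3.2]{MR3276118} exactly as in the Riemannian case), $BV$ compactness of a minimizing sequence, and lower semicontinuity of $P_\hh$. The paper instead takes minimizers $v_j$ of the \emph{Riemannian} functionals $\mathcal{J}_{\eps_j}$ from \Cref{existenceeps} with $\eps_j\searrow 0$, extracts an $L^1$ limit $v_0$ using the uniform bound \eqref{eq:BVbound} combined with \eqref{ineq:perieq}, and concludes minimality of $v_0$ for $\mathcal{J}_\hh$ via the diagonal lower semicontinuity \eqref{doublelsc} together with $P_{\eps_j}\to P_\hh$ from \Cref{pepestoph}. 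Both arguments are sound; the paper's choice is deliberate, since producing the sub-Riemannian minimizer as a limit of Riemannian minimizers is exactly the structure exploited later in \Cref{thm:sub-Riem} (the approximating smooth solutions of \eqref{pmc'} and the weak-$^*$ convergence of perimeters), and, as the authors remark in the introduction, such limit minimizers form a special subfamily with typically better properties. Your direct method proves the proposition as stated but would not by itself supply that extra approximation structure. The $L^\infty_{loc}$ step is handled the same way in both arguments (interior minimality, the sub-Riemannian analogue of \Cref{minimplieslambdamin}, and horizontal density estimates, for which the paper cites \cite{psv}). One small correction: your parenthetical suggestion to obtain lower semicontinuity of $P_\hh$ by ``invoking \eqref{doublelsc} with a constant $\eps_j\equiv 1$'' does not work, since that would only bound $P_\hh(E_{v_0},\cdot)$ by $\liminf_k P_1(E_{v_k},\cdot)$ rather than by $\liminf_k P_\hh(E_{v_k},\cdot)$; your alternative justification via the dual (supremum) definition of $P_\hh$ is the correct one and is all that is needed.
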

\begin{proof}
    Let $B\subseteq\rr^{2n}$ be a ball containing $\Om$ such that the Euclidean distance between $\ptl\Om$ and $\ptl B$ is positive, and extend $H$ to $B$ by letting $H\equiv 0$ outside $\Om$. As done in the proof of \Cref{existenceeps}, minimizing $\mathcal{I}_\hh$ is equivalent to minimize 
    \[
\mathcal{J}_\hh(v)=P_\hh(E_v,\Om\times\rr)+Var(v,B\setminus\Om)+\int_BHvdz
    \]
    in $K=\{v\in BV(B):v=\phi\text{ in } B\setminus\Om \}$, where $\phi$ is a fixed function in $W_0^{1,1}(B)$ with trace $\varphi$ on $\ptl \Om$. Let $(\varepsilon_j)_j\subseteq(0,1)$ be such that $\varepsilon_j\searrow 0$ as $j\to\infty$. Let $v_j$ be a minimizer of the functional $\mathcal{J}_{\eps_j}$ defined in \eqref{def:functionalJ}. By \eqref{eq:BVbound} and \eqref{ineq:perieq}, we have
  \begin{multline*}
   \delta Var(v_j,B)\leq \mathcal{J}_{\eps}(v_j)+\tilde{C}\leq \mathcal{J}_{\eps_j}(\phi)+\tilde{C}
    \leq C'P(E_\phi,\Om\times\rr)+Var(\phi,B\setminus\Om)+\int_BH\phi dz +\tilde C,
    \end{multline*}
    where $\tilde C=\int_\Om |X|dz+\int_{\ptl\Om}|\varphi|d\mathcal{H}^{2n-1}$ and $C'=C'(\Om)$. Hence, arguing as in \Cref{existenceeps} $(v_j)_j$ is bounded in $BV(B)$ and it converges in $L^1(B)$ to $v_0\in K$. By \eqref{doublelsc}, $v_0$ is a minimizer of $\mathcal{J}_\hh$, whence $\mathcal{I}_\hh$ has a minimizer in $BV(\Om)$. Finally, the same arguments of \Cref{densityestimates} can be carried out thanks to the sub-Riemannian density estimates (c.f. \cite{psv}) to prove that $u\in L^\infty_{loc}(\Om)$. 
\end{proof}

As in the Riemannian setting, in the extremal case \eqref{Giusti2} we rely again on the notion of generalized solution. 
\begin{definition}\label{genhsoldef}
    A measurable function $u:\Om\longrightarrow[-\infty,+\infty]$ is a \emph{generalized $H$-minimizer} for $P_\hh$ on $\Om\times\rr$ if \eqref{horimin} holds.
\end{definition}
If $N_+$ and $N_-$ are defined as in \eqref{npiunmeno}, in view of \eqref{trace} the following analogous to 
\eqref{neminimoeps} holds.
\begin{proposition}\label{neminimo}
    Let $u$ be a generalized $H$-minimizer. Then 
    \begin{equation}\label{minimobasso}
        P(N_{\pm},A)\pm\int_{N_{\pm}\cap A}H\,dz\leq P(F,A)\pm\int_{F\cap A}H\,dz
    \end{equation}
    for any open set $A\Subset\Om$ and any measurable set $F\subseteq\Om$ such that $N_+\Delta F\Subset A$.
\end{proposition}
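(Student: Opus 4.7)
My plan is to mirror the proof of \Cref{neminimoeps} in the sub-Riemannian setting, with the identity \eqref{trace} serving as the bridge between horizontal and Euclidean perimeters for vertically invariant sets. I treat the case of $N_+$ in detail; the case of $N_-$ is entirely analogous (either directly, replacing $u_j := u-j$ by $u_j := u+j$ and working with the cylinder $(\Om\setminus N_-)\times\rr$, or by applying the $N_+$ case to $-u$).

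The first step is to show that the cylinder $E_v := N_+\times\rr$ itself satisfies the $H$-minimality inequality \eqref{horimin}. For each $j\in\mathbb N$, set $u_j := u-j$. Since \eqref{horimin} is invariant under vertical translations of the competing sets (horizontal perimeters are vertically translation invariant, and $H$ depends only on $z$), each $u_j$ is itself a generalized $H$-minimizer. As $j\to\infty$ the functions $u_j$ converge almost everywhere to the measurable function $v$ equal to $+\infty$ on $N_+$ and $-\infty$ elsewhere, so that $\chi_{E_{u_j}}\to\chi_{E_v}$ in $L^1_{loc}(\Om\times\rr)$. Combining the lower semicontinuity of $P_\hh$ (\Cref{pepestoph}) with the standard competitor construction for almost minimizers (completely parallel to the one recalled in the proof of \Cref{neminimoeps}) yields that $E_v$ is an $H$-minimizer for $P_\hh$ in the sense of \eqref{horimin}.

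Suppose now by contradiction that there exist an open set $A\Subset\Om$, a measurable set $F\subseteq\Om$ with $N_+\Delta F\Subset A$, and $\delta>0$ such that
\[
P(F,A)+\int_{F\cap A}H\,dz \le P(N_+,A)+\int_{N_+\cap A}H\,dz - \delta.
\]
For $L>0$, set $A_L := A\times[-L,L]$ and $A_{2L} := A\times(-2L,2L)$, and define
\[
F_L :=
\begin{cases}
F\times\rr & \text{in } A_L,\\
E_v & \text{in } (\Om\times\rr)\setminus A_L.
\end{cases}
\]
Clearly $F_L\Delta E_v\Subset A_{2L}$. Decomposing $\partial^*F_L\cap A_{2L}$ into vertical lateral pieces (inside $A\times(-L,L)$ lying on $\partial^*F\times\rr$, and inside $A\times((-2L,-L)\cup(L,2L))$ lying on $\partial^*N_+\times\rr$) together with the two horizontal interfaces at $t=\pm L$, then applying \eqref{trace} to the former and \eqref{periforw11} together with \Cref{pepestoph} to the latter, we get
\begin{align*}
P_\hh(F_L;A_{2L}) + \int_{F_L\cap A_{2L}}H\,dx
&\le 2\int_\Om |X|\,dz + 2L\bigl(P(F,A)+P(N_+,A)\bigr) \\
&\quad + 2L\int_{F\cap A}H\,dz + 2L\int_{N_+\cap A}H\,dz.
\end{align*}
Since $P_\hh(E_v;A_{2L}) + \int_{E_v\cap A_{2L}}H\,dx = 4L\,P(N_+,A) + 4L\int_{N_+\cap A}H\,dz$ by \eqref{trace}, the contradiction hypothesis gives
\[
P_\hh(F_L;A_{2L}) + \int_{F_L\cap A_{2L}}H\,dx \le P_\hh(E_v;A_{2L}) + \int_{E_v\cap A_{2L}}H\,dx + 2\int_\Om|X|\,dz - 2L\delta,
\]
which is a strict inequality provided $L>\delta^{-1}\int_\Om|X|\,dz$, contradicting the $H$-minimality of $E_v$. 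The main obstacle is the first step, where the $H$-minimizer property has to be propagated through the limit $j\to\infty$; once this is secured, the rest is a faithful sub-Riemannian rendering of the Riemannian computation, with \eqref{trace} converting horizontal into Euclidean perimeters on vertical cylinders.
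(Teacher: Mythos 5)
Your proposal is correct and follows exactly the route the paper intends: it repeats the proof of \Cref{neminimoeps} with $P_\hh$ in place of $P_\eps$, using \eqref{trace} to convert horizontal perimeters of vertical cylinders into Euclidean ones (the paper itself only states that the result follows "in view of \eqref{trace}" analogously to \Cref{neminimoeps}). The details — vertical translation invariance of \eqref{horimin}, stability of $H$-minimality under $L^1_{loc}$ limits as in \Cref{hcompthm}, and the cylinder competitor $F_L$ with the interface cost $2\int_\Om|X|\,dz$ beaten by taking $L>\delta^{-1}\int_\Om|X|\,dz$ — all check out.
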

Before proving \Cref{thm:sub-Riem}, we need the following compactness argument, whose proof follows the approach of \cite{MR2976521}.
\begin{theorem}\label{hcompthm}
    Let $(\varepsilon_j)_j\subseteq(0,1)$ be such that $\varepsilon_j\searrow 0$ as $j\to\infty$. Let $(\Om_j)_j$ be a sequence of open sets such that $ \Om_j\Subset\Om_{k}\Subset\Om$  for any $j<k$ and $\bigcup_{j=0}^\infty\Om_j=\Om$. Let $(H_j)_j\subseteq L^\infty(\Om)$ be such that $H_j\to H$ uniformly on $\Om$. For any $j\in\mathbb N$, assume that $u_j\in BV_{loc}(\Om_j)$ is such that $E_{u_j}$ is an $H_j$-minimizer for $P_{\varepsilon_j}$ on $\Om_j\times\rr$ (cf. \Cref{hmindeff}). Then, up to a subsequence, $(u_j)_j$ converges almost everywhere to a generalized $H$-minimizer for $P_\hh$ on $\Om\times\rr$.
    If in addition $E_u$ is a Caccioppoli set, then
    \begin{equation}\label{pepsweakstarph}
        P_{\varepsilon_j}(E_{u_j},\cdot)\rightharpoonup^*P_{\mathbb H}(E_u,\cdot)
    \end{equation}
    locally on $\Om\times\rr$.
\end{theorem}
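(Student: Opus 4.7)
The plan is to mimic a Riemannian-to-sub-Riemannian approximation scheme combining uniform local perimeter bounds from the almost-minimality of $E_{u_j}$, a $BV$-compactness argument, and a Miranda-type competitor construction, closing up via the lower semicontinuity \eqref{doublelsc} and the pointwise convergence $P_{\varepsilon_j}(F,\cdot)\to P_\hh(F,\cdot)$ from \Cref{pepestoph}.

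First I would establish uniform local perimeter bounds. Given a compact $K\subseteq\Om\times\rr$, for $j$ large enough one has $K\Subset\Om_j\times\rr$, and by covering $K$ with finitely many Euclidean balls $B\Subset\Om_j\times\rr$, I would test the $H_j$-minimality of $E_{u_j}$ against the competitor $E_{u_j}\setminus B$ via \eqref{quasiminimizerdef}. This yields
\[
P_{\varepsilon_j}(E_{u_j},B)\leq \mathcal{H}^{2n}_{\varepsilon_j}(\partial B)+\|H_j\|_{L^\infty(\Om)}|B|
\]
with the right-hand side controlled uniformly in $j$ thanks to \eqref{ineq:perieq}, which provides equivalence constants uniform as $\varepsilon_j\to 0^+$. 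Standard Euclidean $BV$-compactness then delivers, up to a subsequence, a Caccioppoli set $E\subseteq\Om\times\rr$ with $\chi_{E_{u_j}}\to\chi_E$ in $L^1_{loc}(\Om\times\rr)$ and almost everywhere. Since every $E_{u_j}$ is a subgraph and the family $\{\chi_{(-\infty,c)}\}_{c\in\overline{\rr}}$ is $L^1_{loc}(\rr)$-closed, a slice-by-slice Fubini argument produces a measurable $u\colon\Om\to[-\infty,+\infty]$ with $E=E_u$ modulo null sets and $u_j\to u$ almost everywhere on $\Om$.

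Next I would show that $u$ is a generalized $H$-minimizer for $P_\hh$. Fix an open set $A\Subset\Om\times\rr$ and a competitor $F$ with $E_u\Delta F\Subset A$. I would choose an intermediate open set $A'$ with $E_u\Delta F\Subset A'\Subset A$ from a one-parameter family (for instance $t$-neighborhoods of $\overline{E_u\Delta F}$) so that $\partial A'$ is negligible for the weak-$\ast$ limit of $P_{\varepsilon_j}(E_{u_j},\cdot)$ and for $P_\hh(F,\cdot)$, and I would form the Miranda-type competitor
\[
F_j:=(F\cap A')\cup(E_{u_j}\setminus A'),
\]
which eventually satisfies $E_{u_j}\Delta F_j\Subset A'\Subset A$. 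Testing the $H_j$-minimality of $E_{u_j}$ against $F_j$ and splitting $P_{\varepsilon_j}(F_j,A)$ into the contributions from $A'$ (equal to $P_{\varepsilon_j}(F,A')$ up to a $\partial A'$-correction that disappears by the choice of $A'$) and from $A\setminus\overline{A'}$ (equal to $P_{\varepsilon_j}(E_{u_j},A\setminus\overline{A'})$), I would pass to the limit using \eqref{doublelsc}, \Cref{pepestoph}, and the uniform convergence $H_j\to H$, ultimately deducing \eqref{horimin} for $E_u$.

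Finally, when $E_u$ is Caccioppoli, the measures $P_{\varepsilon_j}(E_{u_j},\cdot)$ are locally uniformly finite on $\Om\times\rr$ by the first step, so a further subsequence converges weakly-$\ast$ to a Radon measure $\mu\geq P_\hh(E_u,\cdot)$ by \eqref{doublelsc}. To identify $\mu=P_\hh(E_u,\cdot)$ I would repeat the Miranda construction with $F=E_u$ and exhaust $A$ through the auxiliary family of sets $A'$, obtaining $\limsup_j P_{\varepsilon_j}(E_{u_j},A)\leq P_\hh(E_u,A)$ on all good open sets $A$. \textbf{The main obstacle} will be the careful control of the jump of $F_j$ across $\partial A'$, since $F_j$ transitions from $F$ inside $A'$ to $E_{u_j}$ outside: the standard remedy, inspired by \cite{MR0500423}, is to choose $A'$ within a family parametrised by a continuous radius so that the offending $\partial A'$-trace vanishes for all but countably many radii, making the limit passage clean.
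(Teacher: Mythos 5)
Your overall architecture (uniform local perimeter bounds from almost-minimality, compactness, a Miranda-type competitor $F_j=(F\cap A')\cup(E_{u_j}\setminus A')$ glued along well-chosen interfaces, then lower semicontinuity via \eqref{doublelsc} together with $P_{\varepsilon_j}(F,\cdot)\to P_\hh(F,\cdot)$, and finally the choice $F=E_u$ to identify the weak-$*$ limit) is essentially the paper's proof. There is, however, one genuine gap in the compactness step.

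You deduce from the comparison $P_{\varepsilon_j}(E_{u_j},B)\leq \mathcal H^{2n}_{\varepsilon_j}(\partial B)+\|H_j\|_{L^\infty}|B|$ a uniform bound on the \emph{$\varepsilon_j$-perimeters}, and then invoke ``standard Euclidean $BV$-compactness''. This does not follow: in \eqref{ineq:perieq} the inequality $Var(f,A)\leq c\,Var_\varepsilon(f,A)$ holds with a constant $c=c(\Om,\varepsilon)$ that blows up like $1/\varepsilon$ as $\varepsilon\to 0$ (only the constant $C$ in the opposite inequality is uniform for small $\varepsilon$). Concretely, a boundary piece whose Euclidean normal is close to $\partial_t$ contributes to $P_{\varepsilon_j}$ only an amount of order $\varepsilon_j$ times its Euclidean area, so a uniform bound $\sup_j P_{\varepsilon_j}(E_{u_j},K)<\infty$ is compatible with $P(E_{u_j},K)\to\infty$. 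Hence the sequence need not be precompact in Euclidean $BV$, and the limit set need not be a Euclidean Caccioppoli set — which is precisely why the second assertion of the theorem carries the additional hypothesis that $E_u$ is Caccioppoli. The correct route, and the one the paper takes, is to pass through the inequality $P_\hh\leq P_{\varepsilon_j}$ of \Cref{pepestoph} to obtain uniform bounds on the \emph{horizontal} perimeters, and then apply the sub-Riemannian compactness result \Cref{firstcompact} (whose proof goes through smooth approximation in $BV_{\mathbb H}$ and the compact embedding of \cite{MR1404326}); the representation of the $L^1_{loc}$-limit as a subgraph $E_u$ of a $[-\infty,+\infty]$-valued function then follows from the argument of \cite[Lemma 16.3]{MR0775682}. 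With this substitution the remainder of your argument — the interface control on $\partial A'$ by a generic choice within a one-parameter family, the limit passage in the minimality inequality, and the identification of the weak-$*$ limit measure — goes through as in the paper.
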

    
\begin{proof}
We set $E_j=E_{u_j}$ and $P_{\varepsilon_j}=P_j$ for any $j\in\mathbb N$. First we show that there exists an $\mathbb H$-Caccioppoli set $E$ on $\Om\times\rr$ such that, up to a subsequence,
  \begin{equation}\label{e0}
      \chi_{E_j}\to\chi_E
  \end{equation}
  in $L^1(A')$ for any open set $A'\Subset\Om\times\rr$. 
 Since $\bar A'$ is compact, there exists $k\in\mathbb N$, $p_1,\ldots,p_k\in A'$, $r_1,\ldots,r_k>0$ and $\tilde\Om\Subset\Om$ such that
  \begin{equation*}
      \bar A'\Subset \bigcup_{i=1}^kB(p_i,r_i)\Subset\tilde\Om\times\rr.
  \end{equation*}
  Therefore, in view of \Cref{pepestoph} and \eqref{ineq:perden}, we have
  \begin{equation*}
  \begin{split}
      P_{\mathbb H}\left(E_j,\bigcup_{i=1}^kB(p_i,r_i)\right)\leq P_{\varepsilon_j}\left(E_j,\bigcup_{i=1}^kB(p_i,r_i)\right)\leq\sum_{i=1}^k P_{\varepsilon_j}(E_j,B(p_i,r_i))
      \leq c_2\sum_{i=1}^kr_i^{2n}.
  \end{split}
  \end{equation*}
  Hence we can apply \Cref{firstcompact} to infer the existence of a finite $\mathbb H$-perimeter set $F$ in $\bar A'$ such that, up to a subsequence, $\chi_{E_j}\to\chi_{F}$ in $L^1( A')$. Taking a sequence of relative compact sets that covers $\Om\times \rr$ and using a standard diagonal argument, \eqref{e0} follows. Thanks to \cite[Lemma 16.3]{MR0775682}, $E=E_u$ for a measurable function $u:\Om\longrightarrow[-\infty,+\infty]$, and moreover, up to a subsequence, $u_j\to u$ almost everywhere on $\Om$. 
  Arguing as above,
\begin{equation*}
    \sup_{j\in\mathbb N}P_{j}(E_j,K)<\infty
\end{equation*}
for any $K\Subset\Om\times\rr$. Therefore, \cite[Theorem 1.59]{MR1857292} implies the existence of a $(2n+1)$-valued Radon measure $\mu$ on $\Om\times\rr$ and a scalar Radon measure $\lambda$ on $\Om\times\rr$ such that, up to a subsequence,
\begin{equation}\label{twoweakstar}
D_{j}\chi_{E_j}\rightharpoonup^*\mu\qquad\text{and}\qquad P_{j}(E_j,\cdot)\rightharpoonup^*\lambda
\end{equation}
locally on $\Om\times\rr$ as $j\to\infty$.
We claim that 
\begin{equation}\label{misureuguali}
    \mu=(D_{\mathbb H}\chi_E,0).
\end{equation}
 Indeed, fix $K\Subset\Om\times\rr$ and $g\in C^1_c(K,\rr^{2n+1})$, and set
 \begin{equation*}
V_{j}=\sum_{i=1}^n(g_iX_i+g_{n+i}Y_i)+g_{2n+1}\varepsilon_j T=:V+g_{2n+1}\varepsilon_j T.
 \end{equation*}
 Then, since $\chi_{E_j}\to\chi_E$ in $L^1_{loc}(\Om\times\rr)$, we infer that
\begin{equation*}
\begin{split}
    \left|\int_Kg\cdot dD_{j}\chi_{E_j}-\int_Kg\cdot d(D_{\mathbb H}\chi_E,0)\right|&=\left|\int_{K\cap E_j}\divv V_j\,dx-\int_{K\cap E}\divv V\,dx\right|\\
    &  
    \leq\int_{K\cap(E\Delta E_j)}|\divv V|\,dx+\varepsilon_j\int_K\left|\frac{\partial g_{2n+1}}{\partial t}\right|\,dx\\
    &\leq |K\cap(E\Delta E_j)|\|\divv V\|_{L^\infty(K)}+\varepsilon_j|K|\left\|\frac{\partial g_{2n+1}}{\partial t}\right\|_{L^\infty(K)}\\
    &\to 0
\end{split}
\end{equation*}
as $j\to\infty$. An easy approximation argument allows to extend the previous convergence to any $g\in C^0_c(K,\rr^{2n+1})$, thus proving \eqref{misureuguali}. Therefore, combining \eqref{twoweakstar}, \eqref{misureuguali} and \cite[Proposition 1.62]{MR1857292}, we conclude that
\begin{equation}\label{disugmeas}
    P_{\mathbb H}(E,\cdot)\leq\lambda(\cdot).
\end{equation}
Let $A$ be an open set such that $A\Subset\Om\times\rr$, and let $F$ be a Caccioppoli set on $\Om\times\rr$ such that $F\Delta E\Subset A$. We claim that\begin{equation}\label{stronghminthm}
   P_{\mathbb H} (E,A)+\int_{E\cap A}H\,dx\leq P_{\mathbb H} (F,A)+\int_{F\cap A}H\,dx.
    \end{equation}
For any $j\in\mathbb N$, Let $A_j$ be an open set with Lipschitz boundary such that 
$ E\Delta F\Subset A_0\Subset A_j\Subset A_{j+1}\Subset A $
and $\bigcup_{j\in\mathbb N}A_j=A$.
Up to a further subsequence, we assume that $\chi_{E_j}\to\chi_E$ almost everywhere on $\Om\times\rr$. This fact allows to choose $(A_j)_j$ in such a way that 
\begin{equation}\label{charonbound}
    \chi_{E_j}\to\chi_E
\end{equation}
$\mathcal{H}^{2n}$-almost everywhere on $\partial A_j$. Moreover, being $E_j$ and $F$ are Caccioppoli sets, 
\begin{equation}\label{radonisgood}
    \mathcal H^{2n}(\partial^*E_j\cap \partial A_j)=0\qquad\text{and}\qquad \mathcal H^{2n}(\partial^*F\cap \partial A_j)=0
\end{equation}
for any $j\in\mathbb N$.
For any $j\in\mathbb N$, we define $ F_j:=(F\cap A_j)\cup(E_j\setminus A_j).$
It is clear that $F_j$ is a Caccioppoli set such that $F_j\Delta E_j\Subset A_j\Subset A$. 
Therefore in particular 
\begin{equation}\label{e1}
    F_j\cap (A_j\setminus A_0)=E_j\cap (A_j\setminus A_0).
\end{equation}
Notice that, thanks to \eqref{ineq:perieq},\eqref{charonbound} and \eqref{radonisgood} and arguing as in \cite[Theorem 21.14]{MR2976521}, \begin{equation}\label{e2}
\lim_{j\to\infty}P_j(F_j,\partial A_j)=0.
\end{equation}
We are able to prove \eqref{stronghminthm}. Indeed, exploiting the  $H$-minimality of $E_j$ for $P_j$, together with \eqref{e1}, we see that 
\begin{equation*}
\begin{split}
 P_j(E_j,A)+\int_{E_j\cap A}H_j\,dx&\leq P_j(F_j,A)+\int_{F_j\cap A}H_j\,dx\\
&=P_j(F_j,A_j)+P_j(F_j,\partial A_j)+P_j(F_j,A\setminus\overline{A_j})+\int_{F_j\cap A}H_j\,dx\\
&=P_j(F,A_j)+P_j(F_j,\partial A_j)+P_j(E_j,A\setminus\overline{A_j})+\int_{F_j\cap A}H_j\,dx\\
&\leq P_j(F,A)+P_j(F_j,\partial A_j)+P_j(E_j,A\setminus\overline{A_j})+\int_{F_j\cap A}H_j\,dx,
 \end{split}
\end{equation*}
which implies that
\begin{equation*}
    P_j(E_j,A_j)+\int_{E_j\cap A}H_j\,dx\leq P_j(F,A)+P_j(F_j,\partial A_j)+\int_{F_j\cap A}H_j\,dx.
\end{equation*}
Therefore, exploiting \Cref{pepestoph} together with \eqref{e0} and \eqref{e1}, we can pass to the limit and obtain 
\begin{equation*}
    \lambda(A)+\int_{E\cap A}H\,dx\leq P_{\mathbb H}(F,A)+\int_{F\cap A}H\,dx.
\end{equation*}
Notice that \eqref{disugmeas}
allows to achieve \eqref{stronghminthm}. Finally, if $E_u$ is a Caccioppoli set, then we can choose $F=E$ in the previous inequality, so that, recalling \eqref{disugmeas}, \eqref{pepsweakstarph} follows.
\end{proof}
\begin{proof}[Proof of \Cref{thm:sub-Riem}]
     Being the non-extremal case already covered by \eqref{nonextremalhoriz}, we can assume \eqref{Giusti2}. Let $(\varepsilon_j)_j$ and $(\Om_j)_j$ be as in the statement of \Cref{hcompthm}. Assume in addition that $P(\Om_j)\to P(\Om)$ as $j\to\infty$. Assume that, for any $j\in\mathbb N$, $\Om_j$ has Lipschitz boundary. 
   By hypothesis
     \begin{equation*}
          \Big|\int_{\tilde\Om_j}H\Big|<P(\Om_j)
      \end{equation*}
      for any $j\in\mathbb N$.
      Arguing as in the proof of \Cref{existence*},
    there exists a sequence $(H_j)_j\subseteq C^\infty(\overline\Om)$ such that
\begin{equation*}\label{approxhlipdue}
    H_j\to H \text{ uniformly on }\Om\qquad\text{and}\qquad\|H_j\|_{C^1(\Om)}\leq H_0+\|H\|_{L^\infty(\Om)}+1
\end{equation*}
for any $j\in\mathbb N$, being $H_0$ the Lipschitz constant of $H$ on $\Om$. Arguing again as in the proof of \Cref{existence*}, up to a subsequence each $H_j$ satisfies the hypotheses of \Cref{existenceeps} on $\Om_j$.
    Hence, for any $j\in\mathbb N$, there exists $u_j\in BV(\Om_j)$ which minimizes $\mathcal I_{\varepsilon_j}$, with source $H_j$, for any $j\in\mathbb N$. In view of \Cref{minimplieslambdamin}, $E_{u_j}$ is an $H_j$-minimizer for $P_{\varepsilon_j}$ on $\Om_j\times\rr$.
   Arguing as in the proof of \Cref{thm:1}, \Cref{hcompthm} implies the existence of a generalized $H$-minimizer for $P_\hh$ on $\Om$ as in \Cref{genhsoldef} such that, up to vertical translations, $u_j\to u$ almost everywhere on $\Om$. Moreover, arguing again as in \Cref{thm:1}, in view of \Cref{neminimo} and exploiting sub-Riemannian volume density estimates as in \cite{psv}, we conclude that $u\in L^\infty_{loc}(\Om)$. Hence, being $E_u$ an $H$-Caccioppoli set, then \cite[Theorem 1.2]{MR3276118} implies that $u\in BV_{loc}(\Om)$. In this case, $E_u$ is a Caccioppoli set, so that \eqref{pepsweakstarph} follows. 
\end{proof}

\subsection{An application to the sub-Riemannian second variation formula}
Let $S=\partial E$ be a smooth non-characteristic hypersurface, let $A\subseteq\hh^n$ be an open set such that $A\cap S\neq\emptyset$, and let $\xi\in C^\infty_c(A)$. Then it is well-known (cf. e.g. \cite{MR2322147,MR3319952}) that
    \begin{equation*}
        \frac{d^2}{dt^2}|\partial E_t|_\hh(A)\Big\vert_{t=0}=\int_S\left(|\nabla^{\mathbb H,S}\xi|^2-\xi^2\left(q-(H^{\mathbb H})^2\right)\right)\,d|\partial E|_\hh,
    \end{equation*}
    where
    \begin{equation}\label{q}
        q=\sum_{h,k=1}^{2n}Z_h(\vh_k)Z_k(\vh_h)+4\langle J(\vh),\nabla^\hh(Td^\hh)\rangle+4n(Td^\hh)^2
    \end{equation}
and where by $E_t$ we mean a smooth variation along the vector field $\xi\vh$.
Observe that $q$ does not depend on the chosen unitary extension of $\vh|_S$. Here $d|\partial E|_\hh$ is the horizontal perimeter measure, $d^\hh$ is the horizontal Carnot-Carathéodory distance defined in \eqref{dccaprile}, $\nabla^{\mathbb H}$ is the \emph{horizontal gradient}, $\nabla^{\mathbb H,S}$ is the \emph{horizontal tangential gradient} of $S$ and $H^{\mathbb H}$ is the \emph{horizontal mean curvature} of $S$ (cf. \cite{MR2354992}).
In the following theorem, we recover the interpretation of $q$ given in \cite{MR2710215,MR2723818} as the limit of  $|h^\eps|^2+\ric_\eps(\v)$ as $\eps \to 0$, using the fact that the family of Riemannian structures $(\hn,g_\eps)$ approximate in a suitable sense the sub-Riemannian manifold $(\hn,\langle\cdot,\cdot\rangle$) (cf. \cite{MR2312336}). Notice that both terms $|h^\eps|^2$ and $\ric_\eps(\v)$ diverge, while the sum is controlled thanks to \eqref{needed inproposition}.
\begin{theorem}\label{thm:2v}
   Let $S=\partial E$ be a hypersurface of class $C^3$. Then
   \begin{equation*}
       \lim_{\eps\to 0}\left(|h^\eps|^2+\ric_\eps(\v)\right)=q
   \end{equation*}
   locally uniformly in the non-characteristic part of $S$, where $q$ is as in \eqref{q}.
\end{theorem}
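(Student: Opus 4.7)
The plan is to extract the desired limit directly from the identity \eqref{needed inproposition}, which expresses $|h^\eps|^2+\ric_\eps(\ve)$ on $S$ in closed form when $\ve$ is taken to be the $g_\eps$-signed distance extension $\ve=\nabla d_\eps$ (here $d_\eps$ denotes the signed $g_\eps$-distance to $S$, of class $C^3$ in a tubular neighbourhood of $S$). In this extension one has the convenient identity $\vt/\eps=Td_\eps$. Fix a non-characteristic point $p_0\in S$ and a $C^3$ defining function $f$ of $S$ near $p_0$ with $|\nabla^\hh f|>0$. The explicit form of the $g_\eps$-unit normal then yields, on $S$,
\[
\ve_i=\frac{Z_i f}{\sqrt{|\nabla^\hh f|^2+\eps^2(Tf)^2}}\quad(i=1,\dots,2n),\qquad \frac{\vt}{\eps}=\frac{Tf}{\sqrt{|\nabla^\hh f|^2+\eps^2(Tf)^2}},
\]
so that $\ve_i|_S\to\vh_i$ and $(\vt/\eps)|_S\to Tf/|\nabla^\hh f|=Td^\hh|_S$ uniformly on compact non-characteristic subsets of $S$; the last equality holds because $f/|\nabla^\hh f|$ and $d^\hh$ agree on $S$ to first order off the characteristic locus.

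Next I would take the limit termwise on the right-hand side of \eqref{needed inproposition}. Splitting
\[
\sum_{l,s=1}^{2n+1}Z_s(\ve_l)Z_l(\ve_s)=\sum_{l,s=1}^{2n}Z_s(\ve_l)Z_l(\ve_s)+2\sum_{s=1}^{2n}Z_s(\ve_{2n+1})Z_{2n+1}(\ve_s)+\bigl(Z_{2n+1}(\ve_{2n+1})\bigr)^2,
\]
and using $Z_{2n+1}=\eps T$ together with $\ve_{2n+1}=\eps(\vt/\eps)$, one sees that every factor bearing the index $2n+1$ carries an extra power of $\eps$. Provided the $\ve_i$ and $\vt/\eps=Td_\eps$ remain uniformly bounded in $C^1$ on compact non-characteristic subsets of $S$, the cross and pure $2n{+}1$ contributions vanish as $\eps\to 0$, while the leftover piece converges to $\sum_{h,k=1}^{2n}Z_h(\vh_k)Z_k(\vh_h)$. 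The second term of \eqref{needed inproposition}, namely $4\langle J(\ve),\nabla(\vt/\eps)\rangle$, reduces to $4\langle J(\ve),\nabla^\hh(\vt/\eps)\rangle$ since $J(\ve)$ is purely horizontal, and hence converges to $4\langle J(\vh),\nabla^\hh(Td^\hh)\rangle$; the third term $4n(\vt/\eps)^2$ tends to $4n(Td^\hh)^2$. Summing these three limits reproduces exactly the expression of $q$ in \eqref{q}.

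The main obstacle is the justification of the $C^2$ convergence $d_\eps\to d^\hh$ on compact subsets of the non-characteristic part of $S$: this is precisely what makes the passage to the limit under the second-order expressions $Z_l(\ve_s)=Z_l Z_s d_\eps$ legitimate, and it is the only non-algebraic ingredient in the scheme above. At a non-characteristic $p_0\in S$ both $S$ and $d^\hh$ are smooth in horizontal graph coordinates transverse to $\hhh_{p_0}$, and the required uniform-in-$\eps$ bounds follow from the implicit function theorem applied to the eikonal equation $|\nabla d_\eps|_\eps^2=1$, viewed as a regular perturbation of $|\nabla^\hh d^\hh|^2=1$ in such coordinates. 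Once this convergence is in place, all of the term-by-term limits above are rigorous, and the theorem follows.
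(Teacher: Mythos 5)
Your overall strategy coincides with the paper's: both start from \eqref{needed inproposition} and pass to the limit term by term, and your bookkeeping of which contributions carry extra powers of $\eps$ (via $Z_{2n+1}=\eps T$ and $\ve_{2n+1}=\eps\cdot(\vt/\eps)$) correctly reproduces the three pieces of $q$ in \eqref{q}. The difference is in how the $\eps$-dependence of the normal and of its first derivatives is controlled. The paper never touches the Riemannian signed distance $d_\eps$: it chooses the unitary extension
$\ve=\bigl(\vh+\eps(Td^\hh)\,\eps T\bigr)/\sqrt{1+\eps^2(Td^\hh)^2}$,
in which every component and every derivative $Z_i\ve_j$ is an explicit rational expression in $\eps$ with $\eps$-independent coefficients built from $\vh$ and $Td^\hh$; the locally uniform limits of $\ve_j$, $\vt/\eps$, $Z_i\ve_j$ and $Z_k\vt$ are then immediate. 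This substitution is legitimate because the right-hand side of \eqref{needed inproposition}, restricted to $S$, involves only $\ve|_S$ and its tangential derivatives (the normal-derivative contributions cancel by unitarity, and $J(\ve)$ is tangent to $S$), so any $C^1$ unitary extension of the normal may be used -- the same observation behind the paper's remark that $q$ is extension-independent.

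By instead anchoring the second-order quantities to $Z_lZ_sd_\eps$, you create the need for $C^2_{loc}$ convergence $d_\eps\to d^\hh$ near the non-characteristic part of $S$, and this is where your argument has a genuine gap. The eikonal equation $|\nabla d_\eps|^2_\eps=1$ is \emph{not} a regular perturbation of the horizontal one in any off-the-shelf sense: the metrics $g_\eps$ degenerate as $\eps\to 0$, the Christoffel symbols in \eqref{levi-civita} and the second fundamental form (cf.\ the $2/\eps^2$ term in \eqref{riemsomma}) blow up, and consequently neither the normal geodesic flow nor the focal radius of $S$ depends regularly on $\eps$. A correct proof along your lines would have to construct $d_\eps$ by the method of characteristics for the Hamiltonian $|p_h|^2+\eps^2p_{2n+1}^2$ with initial data on $S$, check that at non-characteristic points the rescaled initial covector and the characteristic ODEs depend continuously on $\eps$ up to $\eps=0$, and verify that the resulting local solutions agree with the distance functions on a neighbourhood whose size may shrink with $\eps$ but still contains $S$. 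None of this is supplied by invoking the implicit function theorem, and all of it is avoidable by adopting the extension used in the paper.
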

\begin{proof}
    It suffices to observe that
    \begin{equation*}
        \v=\frac{\vh}{\sqrt{1+\eps^2(Td^\hh)^2}}+\frac{\eps Td^\hh}{\sqrt{1+\eps^2(Td^\hh)^2}}\eps T.
    \end{equation*}
    Therefore, noticing that
    \begin{equation*}
        \lim_{\eps\to 0}\v_j=\vh_j\qquad\text{and}\qquad\lim_{\eps\to 0}\frac{\vt}{\eps}=Td^\hh
    \end{equation*}
    locally uniformly for any $j=1,\ldots,2n$, and moreover
    \begin{equation*}
        \lim_{\eps\to 0}Z_i\v_j=Z_i\vh_j\qquad\text{and}\qquad\lim_{\eps\to 0}Z_{2n+1}\v_k=\lim_{\eps\to 0}Z_k\vt=0
    \end{equation*}
    locally uniformly for any $i,j=1,\ldots,2n$ and $k=1,\ldots,2n+1$, the thesis follows from \eqref{needed inproposition}.
\end{proof}

\section{Proofs of \Cref{sec:preliminaries} and \Cref{sec:geomprop}} \label{sec:proofs}


\begin{proof}[Proof of \eqref{ricciexpress}]
   Fix $j=1,\ldots,n$ and $i=1,\ldots,2n+1$. From \eqref{levi-civita}, it easily follows that
\begin{equation*}
\begin{split}
& g_\eps \big(\nabla _{Z_i} U, X_j\big)=Z_iu_j-\frac{1}{\eps}(\delta_{i,j+n}u_{2n+1}+\delta_{i,2n+1}u_{j+n})\\
& g_\eps \big(\nabla _{U} U, X_j\big)=U u_j-\frac{2}{\eps}u_{n+j}u_{2n+1}\\
& g_\eps \big(\nabla_{X_j}U, \nabla _{U} X_j\big)=\frac{1}{\eps}X_j\big( u_{n+j}u_{2n+1}\big)+\frac{1}							{\varepsilon^2}\big(u^2_{2n+1}-u_{n+j}^2\big).
\end{split}
\end{equation*}
Hence
    \begin{equation*}
    \begin{split}
  g_\varepsilon\left(\n{X_j}\n{U}U,X_j\right)&=X_j g_\eps\big(\nabla_U U,X_j\big)=X_j(U u_j)-                             												\frac{2}{\varepsilon}X_j\big( u_{n+j}u_{2n+1}\big)\\
-g_\varepsilon\left(\n{U}\n{X_j}U,X_j\right)&=-U\g{\n{X_j}U,X_j}+\g{\n{X_j}U,\n{U}X_j}\\
            &=-U(X_ju_j)+\frac{1}{\eps}X_j\big( u_{n+j}u_{2n+1}\big)+\frac{1}							{\varepsilon^2}\big(u^2_{2n+1}-u_{n+j}^2\big).
 \end{split}
 \end{equation*}
Moreover, from
    \begin{equation*}
      [U,X_j]=\sum_{k=1}^{2n+1}u_k[Z_k,X_j]-X_j(u_k)Z_k=2u_{n+j}T-\sum_{k=1}^{2n+1}X_j(u_k)Z_k,
    \end{equation*}
    we get
    \begin{equation*}
    \begin{split}
     g_\varepsilon\left(\n{[U,X_j]}U,X_j\right)&=\frac{2}{\eps}u_{n+j}\g{\n{\eps T}U,X_j}-\sum_{k=1}^{2n+1}X_j(u_k)\g{\n{Z_k}U,X_j}\\
       &=2u_{n+j}T(u_j)-\frac{2}{\eps^2}u_{n+j}^2-\sum_{k=1}^{2n+1}X_j(u_k)Z_k(u_j)+\frac{1}{\eps}X_j\big( u_{n+j}u_{2n+1}\big)\\
       &=[U,X_j](u_j)-\frac{2}{\eps^2}u_{n+j}^2+\frac{1}{\eps}X_j\big( u_{n+j}u_{2n+1}\big),
       \end{split}
    \end{equation*}
    so that 
    \begin{equation*}
    g_\varepsilon\left(\n{X_j}\n{U}U-\n{U}\n{X_j}U+\n{[U,X_j]}U,X_j\right)=\frac{1}{\eps^2}(u_{2n+1}^2-3u_{n+j}^2).
    \end{equation*}
    A similar computation shows that
    \begin{equation*}
    \begin{split}
    &g_\varepsilon\left(\n{Y_j}\n{U}U-\n{U}\n{Y_j}U+\n{[U,Y_j]}U,Y_j\right)=\frac{1}{\eps^2}(u_{2n+1}^2-3u_{j}^2)\\
         & g_\varepsilon\left(\n{\eps T}\n{U}U-\n{U}\n{\eps T}U+\n{[U,\eps T]}U,\eps T\right)=\frac{1}{\eps^2}\sum_{j=1}^n( u_j^2+u_{n+j}^2).
    \end{split}
    \end{equation*}
    In view of the previous computations, \eqref{ricciexpress} follows.
\end{proof}

\begin{proof}[Proof of \eqref{riemsomma}]
Consider the extension of $\v$ to a neighborhood of $S$ as $\v=\nabla d$. By a continuity argument and since $S\setminus S_0$ is dense in $S$, it suffices to show \eqref{riemsomma} in the non-characteristic part of $S$. Let us fix $p\in S\setminus S_0$. It is well known (cf. e.g. \cite{MR3385193}) that there exist a local orthonormal frame $e_1,\ldots,e_{n-1},e_{n+1},\ldots,e_{2n-1}$ of $\mathcal H\cap T_pS$ such that $e_{n+j}=J(e_j)$ at $p$ for any $j=1,\ldots,n-1$. We consider the extension to a local orthonormal frame of $T_pS$ by letting 
    \begin{equation*}
        e_n=\frac{1}{\sqrt{1-(\v_{2n+1})^2}}J(\v)\qquad\text{and}\qquad e_{2n}=-\frac{\vt}{\sqrt{1-(\vt)^2}}(\v)_h+\sqrt{1-(\vt)^2}\eps T,
    \end{equation*}
    where $(\v)_h=\v-\vt\eps T$ is the horizontal projection.
Moreover, for any $i,j=1,\ldots,2n$ we set  $e_i=\sum_{k=1}^{2n+1}\alpha^i_kZ_k$. From our choice it follows that
\begin{equation}\label{weird2}
1= -\escpr{e_i,J (e_{i+n})}=\sum_{k=1}^n(\alpha^{n+i}_{n+k}\alpha^i_k-\alpha^{n+i}_k\alpha^i_{n+k})
\end{equation}
for any $i=1,\ldots, n-1$. We set
$$a_{i,j}=-\sum_{l,s=1}^{2n+1}\alpha^i_s\alpha^j_lZ_s(\v_l).$$
 Using \eqref{levi-civita}, it follows that
\begin{equation*}
\begin{split}
    &\g{\n{e_i}e_{n+i},\v}=a_{i,n+i}-\frac{\vt}{\eps}\qquad \qquad\g{\n{e_{n}}e_{2n},\v}=a_{n,2n}-\frac{1}{\eps}\\
    &\g{\n{e_{n+i}}e_{i},\v}=a_{n+i,i}+\frac{\vt}{\eps}\qquad\qquad \g{\n{e_{2n}}e_n,\v}=a_{2n,n}-\frac{1}{\eps}+\frac{2(\vt)^2}{\eps}
\end{split}
\end{equation*}
for any $i=1,\ldots,2n-1$, and 
\[
\g{\n{e_i}e_j,\v}=a_{i,j}
\]
for any $i,j=1,\ldots,2n,\ |i-j|\neq n$. Writing
\begin{equation*}
\begin{split}
    b_{i,n+i}&=-\frac{\vt}{\eps}\qquad\qquad\qquad b_{n,2n}=-\frac{1}{\eps},\qquad\\
    b_{n+i,i}&=\frac{\vt}{\eps} \qquad\qquad\qquad b_{2n,n}=-\frac{1}{\eps}+\frac{2(\vt)^2}{\eps}
    \end{split}
\end{equation*}
for any $i=1,\ldots,n-1$, and $b_{i,j}=0$ for any $i,j=1,\ldots, 2n$ with $|i-j|\neq n$, it follows that
\begin{equation}\label{normsquare}
     |h_p|^2=\sum_{i,j=1}^{2n}h_{i,j}h_{j,i}=\sum_{i,j=1}^{2n}a_{i,j}a_{j,i}+2\sum_{i=1}^n(a_{i,n+i}b_{n+i,i}+a_{n+i,i}b_{i,n+i})+2\sum_{i=1}^nb_{i,n+i}b_{n+i,i},
\end{equation}
where $h_{i,j}$ are the entries of $h$ with respect to $e_1,\ldots,e_{2n}$. First, by \eqref{propv1} and arguing as in \cite{ruled},
\begin{equation}\label{uhh1}
    \begin{split}
        \sum_{i,j=1}^{2n}a_{i,j}a_{j,i}=\sum_{l,s=1}^{2n+1}Z_s(\v_l)Z_l(\v_s).
    \end{split}
\end{equation}
Moreover,
\begin{equation}\label{uhh2}
        \sum_{i=1}^nb_{i,n+i}b_{n+i,i}=-(n-1)\frac{(\vt)^2}{\eps^2}+\frac{1}{\eps}\left(\frac{1}{\eps}-\frac{2(\vt)^2}{\eps}\right)=-(n+1)\frac{(\vt)^2}{\eps^2}+\frac{1}{\eps^2}.
\end{equation}
Finally,
\begin{equation}\label{uhh3}
    \sum_{i=1}^n(a_{i,n+i}b_{n+i,i}+a_{n+i,i}b_{i,n+i})=\frac{\vt}{\eps}\sum_{i=1}^{n-1}\left(a_{i,n+i}-a_{n+i,i}\right)-\frac{1}{\eps}\big(a_{2n,n}+(1-2(\vt)^2)a_{n,2n}\big).
\end{equation}
%
From one hand, in view of \eqref{propv1}, \eqref{propv2}, \eqref{propv3} and 
\eqref{weird2},
\begin{equation}\label{uh1}
    \begin{split}
       \frac{\vt}{\eps} \sum_{i=1}^{n-1}\left(a_{i,n+i}-a_{n+i,i}\right)&=\frac{\vt}{\eps}\sum_{i=1}^{n-1}\left(\sum_{h,k=1}^{2n}\alpha^{n+i}_k\alpha^i_hZ_k(\v_h)-\sum_{h,k=1}^{2n}\alpha^{n+i}_k\alpha^i_hZ_h(\v_k)\right)\\
        &=\frac{2(\vt)^2}{\eps^2}\sum_{i=1}^{n-1}\left(-\sum_{k=1}^n\alpha^{n+i}_k\alpha^i_{n+k}+\sum_{k=1}^n\alpha^{n+i}_{n+k}\alpha^i_k\right)\\
        &=2(n-1)\frac{(\vt)^2}{\eps^2}.
    \end{split}
\end{equation}
Again by \eqref{propv1}, \eqref{propv2} and \eqref{propv3}, a similar computation implies that
\begin{equation}\label{uh2}
    \begin{split}
       -\frac{1}{\eps}&\big(a_{2n,n}+(1-2(\vt)^2)a_{n,2n}\big)
=2\left\langle J(\v),\nabla\left(\frac{\vt}{\eps}\right)\right\rangle+\frac{2(\vt)^2}{\eps^2}.
    \end{split}
\end{equation}
Inserting \eqref{uh1} and \eqref{uh2} in \eqref{uhh3} we get
\begin{equation}\label{uhh3'}
\sum_{i=1}^n(a_{i,n+i}b_{n+i,i}+a_{n+i,i}b_{i,n+i})=2n\frac{(\vt)^2}{\eps^2}+2\left\langle J(\v),\nabla\left(\frac{\vt}{\eps}\right)\right\rangle.
\end{equation}
Replacing \eqref{uhh1}, \eqref{uhh2} and \eqref{uhh3'} in \eqref{normsquare} we obtain the result.
\end{proof}


\begin{proof}[Proof of \eqref{lapbelt}]
   Fix $p\in S$ and consider a geodesic frame $e_1,\ldots,e_{2n}$ of $T_p S$ at $p$ (cf. \cite{MR1138207}), i.e. a local orthonormal frame of $TS$ such that $ \nabla^S_{e_i}e_j(p)=0$
    for any $i,j=1,\ldots,2n+1$, where $\nabla^S$ is the Levi-Civita connection of $(S,g_\eps|_S)$. Let us set $e_i=\sum_{k=1}^{2n+1}\alpha_k^iZ_k$ for any $i=1,\ldots,2n+1$. From the definition of the basis it follow the relations
    \begin{equation}\label{compus}
        \sum_{k=1}^{2n+1}\alpha_k^i\alpha_k^j=\delta_{i,j},\qquad\sum_{k=1}^{2n+1}\alpha_k^i\ve_k=0\qquad\text{and}\qquad\sum_{k=1}^{2n}\alpha_l^k\alpha_s^k=g^{l,s}
    \end{equation}
    for any $i,j=1,\ldots,2n$ and any $l,s=1,\ldots,2n+1$. With respect to a geodesic frame (cf. \cite{MR1138207}) we have that  $\Delta_S  f(p)=\sum_{i=1}^{2n}e_ie_if(p)$. Hence
    \begin{equation}\label{lap1}
        \begin{split}
            \Delta_S  f       &
            =\sum_{i=1}^{2n}g_\eps( \nabla _{e_i}e_i,\nabla   f)+\sum_{i=1}^{2n}g_\eps( e_i,\nabla _{e_i}\nabla   f)\\
            &=\sum_{i=1}^{2n}(g_\eps( \nabla^S _{e_i}e_i,\nabla   f)+g_\eps(\nabla   f,\ve)g_\eps( \nabla_{e_i}e_i,\ve))+\sum_{i=1}^{2n}g_\eps( e_i,\nabla _{e_i}\nabla   f)\\
            &=\sum_{i=1}^{2n}-Hg_\eps(\nabla  f,\nu^\hh)+\sum_{i=1}^{2n}g_\eps( e_i,\nabla _{e_i}\nabla   f).
        \end{split}
    \end{equation}
    On the other hand, by \eqref{compus} and \eqref{invmetric}, we have
        \begin{equation}\label{lap2}
    \begin{split}
\sum_{i=1}^{2n}g_\eps( e_i,\nabla _{e_i}\nabla   f)=\sum_{l,s=1}^{2n+1}g^{l,s}g_\eps(Z_l,\nabla _{Z_s}\nabla   f)=\sum_{l,s=1}^{2n+1}g^{l,s}\Big(Z_s(Z_lf)-g_\eps(\nabla _{Z_s}Z_l,\nabla   f)\Big).
           \end{split}
    \end{equation}
    Exploiting \eqref{levi-civita}, we see that
    \begin{multline}\label{lap3}
            -\sum_{l,s=1}^{2n+1}g^{l,s}g_\eps(\nabla _{Z_s}Z_l,\nabla   f)=\sum_{l,s=1}^{2n+1}\ve_s\ve_lg_\eps(\nabla _{Z_s}Z_l,\nabla   f)\\
                        \begin{split}
            =&\sum_{k=1}^{n}\Big(-\ve_k\ve_{n+k}g_\eps(\nabla  f,T)+\frac{\vet}{\eps}\ve_kg_\eps(\nabla  f,Y_k)+\ve_k\ve_{n+k}g_\eps(\nabla  f,T)\\
            &-\frac{\vet}{\eps}\ve_{n+k}g_\eps(\nabla  f,X_k)+\frac{\vet}{\eps}\ve_kg_\eps(\nabla  f,Y_k)-\frac{\vet}{\eps}\ve_{n+k}g_\eps(\nabla  f,X_k)\Big)\\
            =&\frac{2\vet}{\eps}\sum_{k=1}^n\left(-\ve_{n+k}X_kf+\ve_kY_kf\right).
        \end{split}
    \end{multline}
    Therefore, \eqref{lapbelt} follows from \eqref{lap1}, \eqref{lap2} and \eqref{lap3}.
\end{proof}

\begin{proof}[Proof of \eqref{precodige}]
    Since $\Delta_S \vet$ depends only on $\vet|_S$, we extend $\ve|_S$ letting $\ve=\nabla  d $, in particular $\eps Td=\vet$.
Moreover, in view of \eqref{meancurvexpr}, we extend $H$ to a neighborhood of $S$ by letting 
\begin{equation}\label{hasdiv}
    H(p)=\sum_{i=1}^{2n+1}Z_i\ve_i(p).
\end{equation}
    Using \eqref{lapbelt}, $ TZ_i=Z_i T$, \eqref{propv1}, \eqref{invmetric} and \eqref{hasdiv}, 
and recalling that $$g_\eps(\nabla_SH,Z_{2n+1})=Z_{2n+1}(H)-\ve_{2n+1}g_\eps(\nabla H,\ve),$$
it holds that
\begin{equation}\label{eq:compu1}
        \begin{split}
        \Delta_S \vet-\frac{2\nu_{2n+1}^\eps}{\eps}&\langle\nabla (\eps Td), J\nu^\eps\rangle=\sum_{i,j=1}^{2n+1}g^{i,j}Z_i(Z_j (\eps Td))-Hg_\eps(\nabla  (\eps Td),\ve)\\
            &=\sum_{i,j=1}^{2n+1}g^{i,j}Z_{2n+1}(Z_i\ve_j)-H\sum_{j=1}^{2n+1}(Z_{2n+1}\ve_j) \ve_j \\
            &=Z_{2n+1}\left(\sum_{i=1}^{2n+1}Z_i\ve_i\right)-\sum_{i,j=1}^{2n+1}Z_{2n+1}(Z_i\ve_j)\ve_i\ve_j\\
            &=g_\eps(\nabla^SH,Z_{2n+1})+\ve_{2n+1}g_\eps(\nabla H,\ve)-\sum_{i,j=1}^{2n+1}Z_{2n+1}(Z_i\ve_j)\ve_i\ve_j.
        \end{split}
    \end{equation}
    By \eqref{propv1} and \eqref{weird1}, we have
    \begin{multline}\label{eq:compu0}
              \sum_{i,j=1}^{2n+1}Z_{2n+1}(Z_i\ve_j)\ve_i\ve_j=\sum_{i,j=1}^{2n+1}\left(\ve_iZ_{2n+1}\Big(Z_i\ve_j\ve_j\Big)-(Z_{2n+1}\ve_j)\Big(Z_i\ve_j\ve_i\Big)\right) \\
        =\frac{2\ve_{2n+1}}{\eps}\sum_{j=1}^{2n+1}Z_{2n+1}(Z_j d)J(\ve)_j=\frac{2\ve_{2n+1}}{\eps}\escpr{\nabla  (\eps Td),J(\ve)}.
    \end{multline}
   Inserting \eqref{eq:compu0} in \eqref{eq:compu1},  we get
    \begin{equation*}
        \Delta_S \vet=g_\eps(\nabla^SH,Z_{2n+1})+\ve_{2n+1}g_\eps(\nabla H,\ve).
    \end{equation*}
    Moreover, by \eqref{propv2} and \eqref{propv3},
    \begin{equation*}
        \begin{split}
            g_\eps(\nabla H,\ve)&=\sum_{i,j=1}^{2n+1}Z_j(Z_i\ve_i)\ve_j\\
            &=\sum_{i,j=1}^{2n+1}Z_i(Z_j\ve_i)\ve_j+2\sum_{j=1}^n \big(T(X_jd)(Y_jd)-T(Y_jd)(X_jd)\big)\\
            &=\sum_{i=1}^{2n+1}Z_i\left(\sum_{j=1}^{2n+1}Z_j\ve_i\ve_j\right)-\sum_{i=1}^{2n+1}Z_i\ve_jZ_j\ve_i-2\left\langle\nabla  \left(\frac{\ve_{2n+1}}{\eps}\right),J(\ve)\right\rangle,
        \end{split}    
    \end{equation*}   
    and, from \eqref{weird1}, we get
    \begin{equation*}    
        \begin{split}
            \sum_{i=1}^{2n+1}Z_i\left(\sum_{j=1}^{2n+1}Z_j\ve_i\ve_j\right)&=-2\sum_{i=1}^{2n+1}Z_i\left(\frac{\ve_{2n+1}}{\eps}J(\ve)_i\right)\\
            &=-2\frac{\ve_{2n+1}}{\eps}\sum_{i=1}^{2n+1}Z_i(J(\ve)_i)-2\left\langle\nabla  \left(\frac{\ve_{2n+1}}{\eps}\right),J(\ve)\right\rangle\\
            &=-4n\left(\frac{\ve_{2n+1}}{\eps}\right)^2-2\left\langle\nabla  \left(\frac{\ve_{2n+1}}{\eps}\right),J(\ve)\right\rangle.
        \end{split}
    \end{equation*}  
    The thesis then follows from \eqref{needed inproposition}.
\end{proof}

\bibliographystyle{abbrv}
\bibliography{biblio}
\end{document}